\newcommand{\doi}[1]{\url{https://doi.org/#1}}
\title[Ehrhart theory on periodic graphs]
{Ehrhart theory on periodic graphs}
\author[T. Inoue]{Takuya Inoue}
\address{Graduate School of Mathematical Sciences, 
the University of Tokyo, 3-8-1 Komaba, Meguro-ku, Tokyo 153-8914, Japan.}
\email{inoue@ms.u-tokyo.ac.jp}
\author[Y. Nakamura]{Yusuke Nakamura}
\address{Graduate School of Mathematical Sciences, 
the University of Tokyo, 3-8-1 Komaba, Meguro-ku, Tokyo 153-8914, Japan.}
\email{nakamura@ms.u-tokyo.ac.jp}
\newtheorem{thm}{Theorem}[section]
\newtheorem{lem}[thm]{Lemma}
\newtheorem{cor}[thm]{Corollary}
\newtheorem{prop}[thm]{Proposition}
\newtheorem{quest}[thm]{Question}
\newtheorem{claim}[thm]{Claim}
\theoremstyle{definition}
\newtheorem{defi}[thm]{Definition}
\theoremstyle{remark}
\newtheorem{rmk}[thm]{Remark}
\newtheorem{ex}[thm]{Example}
\newtheorem*{ackn}{Acknowledgements}
\newlength{\zua}
\newlength{\zub}
\newlength{\zuc}
\begin{document}
\begin{abstract}
The purpose of this paper is to extend the scope of Ehrhart theory to periodic graphs. 
We give sufficient conditions for the growth sequences of periodic graphs to be a quasi-polynomial and to satisfy the reciprocity laws. 
Furthermore, we apply our theory to determine the growth series in several new examples. 
\end{abstract}

\maketitle

\tableofcontents

\section{Introduction}

In this paper, a graph $\Gamma$ means a directed graph that may have loops and multiple edges. 
We define an \textit{$n$-dimensional periodic graph} $(\Gamma, L)$ as a graph $\Gamma$ on which a free abelian group $L$ of rank $n$ acts freely and its quotient graph $\Gamma/L$ is finite (see Definition \ref{defi:pg}). 
For a vertex $x_0$ of $\Gamma$, the \textit{growth sequence} $(s_{\Gamma, x_0, i})_{i \ge 0}$ 
(resp.\ \textit{cumulative growth sequence} $(b_{\Gamma, x_0, i})_{i \ge 0}$) is defined as the number of vertices of $\Gamma$ whose distance from $x_0$ is $i$ (resp.\ at most $i$). 
The purpose of this paper is to discuss phenomena similar to Ehrhart theory that appear in the growth sequences of periodic graphs. 

Periodic graphs naturally appear in crystallography, and their growth sequences have been studied intensively in this field. 
In crystallography, the growth sequence is also referred to as the \textit{coordination sequence}. 
For a periodic graph $\Gamma$ corresponding to a crystal, $s_{\Gamma, x_0, 1}$ is nothing but the usual coordination number of the atom $x_0$. 
The coordination sequence is utilized in several crystal database entries (cf.\ \cite{DZS}), and it can be useful, for instance, in distinguishing between two allotropes that cannot be distinguished by the coordination number. 

In \cite{GKBS96}, Grosse-Kunstleve, Brunner and Sloane conjectured 
that the growth sequences of periodic graphs are of quasi-polynomial type, i.e., 
there exist an integer $M$ and a quasi-polynomial $f_s: \mathbb{Z} \to \mathbb{Z}$ such that 
$s_{\Gamma, x_0, i} = f_s(i)$ holds for all $i \ge M$ (see Definition \ref{defi:qp}). 
In \cite{NSMN21}, the second author, Sakamoto, Mase, and Nakagawa prove this conjecture to be true for any periodic graphs (Theorem \ref{thm:NSMN}). 
Although it was proved to be of quasi-polynomial type, determining the quasi-polynomial in practice is still difficult. Thus, the following natural question arises. 

\begin{quest}\label{quest:algorithm}
Find an effective algorithm to determine the explicit formulae of the growth sequences.
\end{quest}

So far, various computational methods have been established for several special classes of periodic graphs. 
In \cite{CS97}, Conway and Sloane give growth sequences of the contact graphs of 
some lattices from the viewpoint of Ehrhart theory (cf.\ \cites{BHV99, ABHPS}). 
In \cite{GS19}, Goodman-Strauss and Sloane propose ``the coloring book approach" 
to obtain the growth sequence for some periodic tilings. 
In \cites{SM19, SM20a}, Shutov and Maleev obtained the growth sequences for 
tilings satisfying certain conditions that contain the 20 2-uniform tilings. 
However, as far as we know, no algorithm that can be applied to general periodic graphs has been proposed, even in dimension two. 

The difficulty of Question \ref{quest:algorithm} is due to the difficulty of determining $M$ that appears in the definition of ``quasi-polynomial type'' above. 
Indeed, if this $M$ can be determined and a quasi-period of $f_s$ is known, 
then the explicit formula of  $(s_{\Gamma, x_0, i})_i$ can be determined by its first few terms. 
In this paper, in the context of Ehrhart theory, 
we focus on the category of graphs whose growth sequences are honest quasi-polynomials  (i.e.\ quasi-polynomials on $i > 0$). 
We note in advance that for these graphs, the difficulty of Question \ref{quest:algorithm} is avoided.

In Ehrhart theory, for a rational polytope $Q \subset \mathbb{R}^N$, it is proved that 
the function 
\[
h_Q:\mathbb{Z}_{\ge 0} \to \mathbb{Z}_{\ge 0};\quad i \mapsto \# \bigl( iQ \cap \mathbb{Z}^N \bigr)
\]
is a quasi-polynomial on $i \ge 0$. 
As we will discuss in Subsection \ref{subsection:PtoPG}, for a rational polytope $Q \subset \mathbb{R}^N$ with $0 \in Q$, we can construct a periodic graph $(\Gamma _{Q}, \mathbb{Z}^N)$ such that its cumulative growth sequence $b_{\Gamma _Q, 0, i}$ coincides with $h_Q(i)$. 
Therefore, we can say that the study of the growth sequences of periodic graphs essentially contains the Ehrhart theory of rational polytopes $Q$ satisfying $0 \in Q$. 
Since the cumulative growth sequence $(b_{\Gamma _Q, 0, i})_i$ is a quasi-polynomial on $i \ge 0$, the following natural question arises. 
\begin{quest}\label{quest:qp}
Find a reasonable class $\mathcal{P}$ of pairs $(\Gamma, x_0)$ that consist of a periodic graph $\Gamma$ and one of its vertices $x_0$ such that 
\begin{itemize}
\item 
$\mathcal{P}$ contains the class $\{ (\Gamma _Q, 0) \mid \text{$Q$ is a rational polytope with $0 \in Q$} \}$, and 

\item 
for any $(\Gamma , x_0) \in \mathcal{P}$, the sequence $(b_{\Gamma, x_0, i})_i$ is an honest quasi-polynomial (i.e.\ a quasi-polynomial on $i \ge 0$).
\end{itemize}
\end{quest}
\noindent
The word ``reasonable'' here means that $\mathcal{P}$ should be a class that can be described in terms of graph theory. 
Note that, unlike the case of Ehrhart theory, the growth sequences of periodic graphs in general are not necessarily quasi-polynomials, 
and they may have finite exceptional terms (see Example \ref{ex:CS}). 
However, it has been observed that for some highly symmetric periodic graphs, they are often honest quasi-polynomials. 
The intention of this question is to describe the properties of such good periodic graphs.

Another important topic of Ehrhart theory is the reciprocity law. 
When we think of the function $h_Q$ as a quasi-polynomial and substitute a negative value for it, 
we have $h_Q(-i) = (-1)^{\dim Q} \# \bigl( i \cdot \operatorname{relint}(Q) \cap \mathbb{Z}^N \bigr)$ for $i > 0$. 
In the growth sequences of some $n$-dimensional periodic graphs, it has been observed that they sometimes satisfy the equations 
\[
f_b(-i) = (-1)^n f_b(i-1), \qquad f_s(-i) = (-1)^{n+1} f_s(i), \tag{$\diamondsuit$}
\]
where $f_b$ and $f_s$ are the corresponding quasi-polynomials to the sequences $(b_{\Gamma, x_0, i})_i$ and $(s_{\Gamma, x_0, i})_i$ (see \cites{CS97, SM19, Wakatsuki}). 
These equations in ($\diamondsuit$) are consistent with the reciprocity laws of reflexive polytopes. 
Thus, the following natural question arises. 
\begin{quest}\label{quest:rl}
Find a reasonable class $\mathcal{P}'$ of pairs $(\Gamma, x_0)$ such that 
\begin{itemize}
\item 
$\mathcal{P}'$ contains the class $\{ (\Gamma _Q, 0) \mid \text{$Q$ is a reflexive polytope} \}$, and 

\item 
for any $(\Gamma , x_0) \in \mathcal{P}'$, its growth sequence satisfies the reciprocity laws ($\diamondsuit$).
\end{itemize} 
\end{quest}

The purpose of this paper is to give answers to Questions \ref{quest:qp} and \ref{quest:rl}. 
First, we introduce invariants $C_1(\Gamma, \Phi, x_0) \in \mathbb{R}_{\ge 0}$ and $C_2(\Gamma, \Phi, x_0) \in \mathbb{R}_{\ge 0}$ for $(\Gamma, x_0)$ and a periodic realization $\Phi: \Gamma \to L_{\mathbb{R}} := L \otimes _{\mathbb{Z}} \mathbb{R}$ (i.e.\ a map preserving an $L$-action). 
These invariants measure the deviation from the polytope approximation of $d_{\Gamma}$, where $d_{\Gamma}$ is the distance function of the graph $\Gamma$. 
Using these invariants, we can give sufficient conditions for the cumulative growth sequence to be a quasi-polynomial and to satisfy the reciprocity laws ($\diamondsuit$).

\begin{thm}[{$=$\ Theorem \ref{thm:ET}}]\label{thm:introET}
Let $(\Gamma, L)$ be a strongly connected $n$-dimensional periodic graph, and let $x_0$ be a vertex of $\Gamma$. Then, the following assertions hold. 
\begin{enumerate}
\item 
If a periodic realization $\Phi$ of $(\Gamma, L)$ satisfies $C_1(\Gamma, \Phi, x_0) + C_2(\Gamma, \Phi, x_0) < 1$, then the cumulative growth sequence $(b_{\Gamma, x_0, i})_i$ is a quasi-polynomial on $i \ge 0$. 

\item 
If a periodic realization $\Phi$ of $(\Gamma, L)$ satisfies both $C_1(\Gamma, \Phi, x_0) < \frac{1}{2}$ and $C_2(\Gamma, \Phi, x_0) < \frac{1}{2}$, then the reciprocity laws ($\diamondsuit$) are satisfied. 
\end{enumerate}
\end{thm}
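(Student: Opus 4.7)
The plan is to associate to $(\Gamma, \Phi, x_0)$ an asymptotic rational polytope $P \subset L_{\mathbb{R}}$ containing the origin, such that the graph distance $d_\Gamma(x_0, -)$ is approximated by the Minkowski functional $p_P$, and then transfer Ehrhart's quasi-polynomiality and reciprocity for $h_P(i) = \#(iP \cap L)$ to the cumulative growth sequence. The invariants $C_1$ and $C_2$ should encode, respectively, the upper and lower relative errors in a sandwich of the form
\[
    p_P(\Phi(y) - \Phi(x_0)) - C_2 \,\le\, d_\Gamma(x_0, y) \,\le\, p_P(\Phi(y) - \Phi(x_0)) + C_1,
\]
uniformly in $y \in V(\Gamma)$, so that the distance-ball of radius $i$ lies between the $\Phi$-preimages of $(i - C_1)P$ and $(i + C_2)P$.

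The first step is to construct $P$. Using strong connectedness and the finiteness of the quotient $\Gamma/L$, for each $v \in L_{\mathbb{R}}$ the normalized distance $\lim_{k \to \infty} \tfrac{1}{k}\, d_\Gamma(x_0, y_k)$ taken along any sequence of vertices with $\tfrac{1}{k}\Phi(y_k) \to v$ exists and defines a positively homogeneous convex function $p$ on $L_{\mathbb{R}}$; the lattice structure of $L$ together with the finitely many translation vectors generating $\Gamma/L$ forces $p$ to be piecewise linear with rational pieces, so $P := \{v : p(v) \le 1\}$ is a rational polytope (essentially the asymptotic shape underlying Theorem \ref{thm:NSMN}).

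For part (1), the sandwich above gives the containments
\[
    \#\bigl((i - C_2)P \cap L\bigr) \,\le\, b_{\Gamma, x_0, i} \,\le\, \#\bigl((i + C_1)P \cap L\bigr);
\]
when $C_1 + C_2 < 1$ and $i \in \mathbb{Z}_{\ge 0}$, no lattice point can lie in the symmetric difference of these two dilates outside $iP$ itself, so both bounds collapse to $h_P(i)$, and the quasi-polynomiality of $b_{\Gamma, x_0, i}$ on $i \ge 0$ follows from classical Ehrhart theory. For part (2), the tighter bounds $C_1, C_2 < \tfrac{1}{2}$ yield the analogous identification at strict radius $i-1$, matching $b_{\Gamma, x_0, i-1}$ simultaneously with $h_P(i-1)$ (via part~(1)) and with the interior count $\#((iP)^\circ \cap L)$; combining these with Ehrhart--Macdonald reciprocity $h_P(-i) = (-1)^n \#((iP)^\circ \cap L)$ produces $f_b(-i) = (-1)^n f_b(i-1)$, and the second equation of $(\diamondsuit)$ then follows by taking first differences.

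The main obstacle will be the construction of $P$: one must establish that the asymptotic distance function on $L_{\mathbb{R}}$ is truly piecewise linear with rational vertices, and that the threshold values $1$ and $\tfrac{1}{2}$ drop out of a uniform sandwich valid at every integer $i \ge 0$ rather than merely asymptotically. Once $P$ is in hand and the sandwich is verified uniformly, the rest reduces to bookkeeping; a subtle point in part (2) is that the two identifications force the reflexivity-like equality $\#((iP)^\circ \cap L) = \#((i-1)P \cap L)$ on $P$ automatically from the hypotheses, which is what ultimately yields the symmetric form of $(\diamondsuit)$.
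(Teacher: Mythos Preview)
Your overall strategy---sandwich $d_\Gamma$ between $p_P$ plus and minus constants, exploit the integrality of $d_\Gamma$, and reduce to an Ehrhart-type count---is exactly the paper's approach. But the step ``both bounds collapse to $h_P(i)$'' is wrong, and this is where the real content lies. What the sandwich together with $C_1+C_2<1$ actually gives (once you untangle your swapped $C_1,C_2$) is the \emph{exact} identity
\[
B_{\Gamma,x_0,i}=\Phi^{-1}\bigl(\Phi(x_0)+(i+\alpha)P_\Gamma\bigr)
\qquad\text{for every }\alpha\in[C_1,\,1-C_2),
\]
so that $b_i=\sum_{j=1}^{c}\#\bigl((\Phi(x_0)-\Phi(y_j)+(i+\alpha)P_\Gamma)\cap L\bigr)$, summed over coset representatives $y_1,\dots,y_c$ of $V_\Gamma/L$. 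The shift $\alpha$ cannot be set to $0$ in general, and the translates $\Phi(x_0)-\Phi(y_j)$ need not lie in $L$ (or even in $L_{\mathbb Q}$). Consequently you cannot invoke classical Ehrhart; you need the shifted version the paper proves in Appendix~\ref{section:ET} (Theorem~\ref{thm:Eh}), namely that $i\mapsto\#\bigl((v+(i+\alpha)P)\cap L\bigr)$ is a quasi-polynomial on $i\ge-\alpha$ for \emph{arbitrary} real $v,\alpha$ and rational $P$. Your text also silently replaces $V_\Gamma$ by $L$, dropping the sum over cosets.

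For part~(2) the proposed mechanism fails for the same reason. The hypotheses $C_1,C_2<\tfrac12$ do not force the reflexivity-type identity $\#\bigl((iP)^\circ\cap L\bigr)=\#\bigl((i-1)P\cap L\bigr)$; the growth polytope is typically not reflexive, and $b_{i-1}$ is not $h_P(i-1)$ in the first place. What the paper does instead is take $\alpha=\tfrac12$ (legal since $C_1<\tfrac12<1-C_2$), observe that the closed and open counts at radius $i+\tfrac12$ agree, and then apply the shifted reciprocity $H_{P,v,1/2}(1/t)=(-1)^{n+1}\overset{\circ}{H}_{P,-v,-1/2}(t)$ from Theorem~\ref{thm:Eh}(3). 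An additional symmetry hypothesis (undirected $\Gamma$, giving $-P_\Gamma=P_\Gamma$, or $\Phi$ symmetric about $x_0$) is needed to convert $\overset{\circ}{H}_{-P,\cdot,\cdot}$ back into $\overset{\circ}{H}_{P,\cdot,\cdot}$; this is stated in Theorem~\ref{thm:ET}(3) though suppressed in the introduction. Finally, your proposed limiting construction of $P$ is unnecessary: the paper defines $P_\Gamma$ directly as the convex hull of the normalized cycle vectors $\nu(\operatorname{Cyc}_{\Gamma/L})\cup\{0\}$, which is manifestly a rational polytope, and the uniform sandwich is Theorem~\ref{thm:asymp}.
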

\noindent
Note that if $\Gamma = \Gamma _Q$ is a periodic graph obtained by a polytope $Q$ with $0 \in Q$, then it follows that $C_1(\Gamma, \Phi, x_0) = 0$ and $C_2(\Gamma, \Phi, x_0) < 1$. 
Furthermore, if $\Gamma = \Gamma _Q$ is a periodic graph obtained by a reflexive polytope $Q$, then it follows that $C_1(\Gamma, \Phi, x_0) = C_2(\Gamma, \Phi, x_0) = 0$.
Therefore, Theorem \ref{thm:introET} is an answer to Questions \ref{quest:qp} and \ref{quest:rl}. 
We also note that Theorem \ref{thm:introET} is a generalization of a result by Conway and Sloane \cite{CS97}, where they only consider the contact graphs of lattices (see Remark \ref{rmk:CS97} for more detail). 

\vspace{4mm}

\begin{itembox}{Answers to Questions \ref{quest:qp} and \ref{quest:rl}}
\[
\xymatrix{
\text{$\Gamma = \Gamma _{Q}$ for a polytope $Q$ with $0 \in Q$} \ar@{=>}[d]^-{\text{\ \ Lemma \ref{lem:PG}}} & \text{$\Gamma = \Gamma _{Q}$ for a reflexive polytope $Q$} \ar@{=>}[d]^-{\text{\ \ Lemma \ref{lem:PG}}} \\
C_1 = 0,\ C_2 <1 \ar@{=>}[d] & C_1=C_2=0 \ar@{=>}[d]\\
C_1 + C_2<1 \ar@{=>}[d]^-{\text{\ \ Theorem \ref{thm:ET}}} & C_1<\frac{1}{2},\ C_2<\frac{1}{2} \ar@{=>}[d]^-{\text{\ \ Theorem \ref{thm:ET}}} \ar@{=>}[l]\\
\text{$(b_{\Gamma, x_0, i})_i$ is an honest q-polynomial.} & 
\text{The reciprocity laws ($\diamondsuit$).}\\
}
\]
\end{itembox}

\vspace{4mm}

For undirected periodic graphs, we can give a larger class that satisfies the reciprocity laws ($\diamondsuit$). 
In Subsection \ref{subsection:WA}, we define a class of undirected periodic graphs called ``well-arranged", and we prove that their growth sequences satisfy the reciprocity laws ($\diamondsuit$) (Theorem \ref{thm:WA}). 
We also introduce the notion of ``$P$-initial" for a vertex $x_0$, and 
we see that a quasi-period of the growth sequence can be explicitly given in this case (Theorem \ref{thm:Piniqp}). The relationship with the invariants $C_1$ and $C_2$ can be summarized in the following diagram: 

\vspace{4mm}

\begin{itembox}{Well-arranged graphs}
\[
\xymatrix{
\text{$\Gamma$ is undirected},\ C_1<\frac{1}{2},\ C_2<\frac{1}{2}. \ar@{=>}[d]^-{\text{\ \ Proposition \ref{prop:refWA}}} & C_2 < 1 \ar@{=>}[d]^-{\text{\ \ Lemma \ref{lem:C1C2}}}\\
\text{$(\Gamma, \Phi, x_0)$ is well-arranged.} \ar@{=>}[d]^-{\text{\ \ Theorem \ref{thm:WA}}} \ar@{=>}[r]^-{\text{Lemma \ref{lem:WAPI}}} & \text{$x_0$ is $P$-initial.} \ar@{=>}[d]^-{\text{\ \ Theorem \ref{thm:Piniqp}}}\\
{\begin{array}{l}
\text{$\bullet$ $(b_{\Gamma, x_0, i})_i$ is an honest q-polynomial.}\\
\text{$\bullet$ The reciprocity laws ($\diamondsuit$).}
\end{array}} &
{\begin{array}{l}
\text{$\bullet$ a quasi-period of $(b_{\Gamma, x_0, i})_i$}\\
\text{\phantom{$\bullet$} can be explicitly given.}
\end{array}}
}
\]
\end{itembox}

\vspace{4mm}

When $(\Gamma, \Phi, x_0)$ is well-arranged, the growth sequence $(s_{\Gamma, x_0, i})_i$ is a quasi-polynomial on $i \ge 1$, and its quasi-period can be explicitly given. 
Therefore, it is possible to determine the explicit formula of $(s_{\Gamma, x_0, i})_i$ by computing the first few terms of it. 
By this method, we compute the growth series in several new examples in Section \ref{section:eg}. 
As far as we know, this is the first time that the growth sequences have been computed for nontrivial $3$-dimensional periodic graphs. 
However, this method is only applicable to well-arranged graphs, and
answers to Question \ref{quest:algorithm} for general periodic graphs remain as future work.

The paper is organized as follows: 
in Section \ref{section:pre}, we first summarize the notations of graphs,
and we define periodic graphs and their growth sequences. 
We then define an important concept, the \textit{growth polytope}, and use it to define the invariants $C_1$ and $C_2$. 
In Section \ref{section:ETP}, we give sufficient conditions for the cumulative growth sequence to be a quasi-polynomial type and to satisfy the reciprocity laws (Theorem \ref{thm:ET}). Furthermore, in Subsection \ref{subsection:PtoPG}, 
we see that Theorem \ref{thm:ET} can be seen as a generalization of the usual Ehrhart theory for polytopes $Q$ with $0 \in Q$ and for reflexive polytopes $Q$. 
In Section \ref{section:PiniWA}, we treat a periodic graph $(\Gamma, L)$ with a $P$-initial vertex $x_0$. In this case, we can calculate the invariant $C_2$ (Proposition \ref{prop:C2}) and 
a quasi-period of the growth sequence of $\Gamma$ with the start point $x_0$ (Theorem \ref{thm:Piniqp}).
Furthermore, we also introduce a class of periodic graphs called ``well-arranged", 
and we prove that their growth sequences satisfy the reciprocity laws (Theorem \ref{thm:WA}). 
In Section \ref{section:eg}, for some specific periodic graphs, we will see whether they are well-arranged and discuss their growth series. 
Furthermore, as an application of Theorem \ref{thm:WA}, we determine the growth series in several new examples (Subsections \ref{subsection:eg_2dim} and \ref{subsection:eg_3dim}). 
In Appendix \ref{section:GP}, we summarize the properties of the growth polytope necessary for the definition of the invariants $C_1$ and $C_2$.
In Section \ref{section:ET}, we discuss a variant of Ehrhart theory (Theorem \ref{thm:Eh}), which is necessary for the proof of Theorem \ref{thm:ET}. The difference from the usual Ehrhart theory is that the center of the dilation need not be the origin, and the dilation factor may be shifted by a constant. 

\begin{ackn}
We would like to thank Professors Akihiro Higashitani, Atsushi Ito, Takafumi Mase, Junichi Nakagawa, Hiroyuki Ochiai, and Masahiko Yoshinaga for many discussions. 
The first author is partially supported by FoPM, WINGS Program, the University of Tokyo and JSPS KAKENHI No.\ 23KJ0795.
The second author is partially supported by JSPS KAKENHI No.\ 18K13384, 22K13888, and JPJSBP120219935. 
Figures \ref{fig:sacada1} and \ref{fig:sacada60} are drawn by using {\sf VESTA} \cite{MI11}. 
\end{ackn}

\section{Notation and Preliminaries}\label{section:pre}

\subsection{Notation}
For a set $X$, $\# X$ denotes the cardinality of $X$, 
and $2 ^X$ denotes the power set of $X$. 

For a finite subset $S \subset \mathbb{Z}_{>0}$, $\operatorname{LCM}(S)$ denotes the least common multiple of the elements of $S$. 

For a polytope $P \subset \mathbb{R}^N$, 
$\operatorname{Facet}(P)$ denotes the set of facets of $P$, 
$\operatorname{Face}(P)$ denotes the set of faces of $P$, and 
$V(P)$ denotes the set of vertices of $P$. 
Note that both $P$ itself and the empty set $\emptyset$ are considered as faces of $P$. 

For a subset $C \subset \mathbb{R}^N$, 
$\operatorname{int}(C)$ denotes the interior of $C$, and 
$\operatorname{relint}(C)$ denotes the relative interior of $C$. 

For a polytope $P \subset \mathbb{R}^N$ of dimension $d$, 
a \textit{triangulation} $T$ of $P$ means a finite collection of $d$-simplices with the following two conditions: 
\begin{itemize}
\item $P = \bigcup _{\Delta \in T} \Delta$. 
\item For any $\Delta _1, \Delta _2 \in T$, $\Delta _1 \cap \Delta _2$ is a face of $\Delta _1$ and $\Delta _2$. 
\end{itemize}

In this paper, monoids always mean commutative monoids. 
We refer the reader to \cite{BG2009} and \cite{NSMN21} for the terminology of monoid and its module theory.

Let $M$ be a set equipped with a binary operation $*$. 
For $u \in M$ and subsets $X, Y \subset M$, we define subsets $u * X, X * Y \subset M$ by
\[
u * X := \{ u * x \mid x \in X \}, \qquad
X * Y := \{ x * y \mid (x,y) \in X \times Y \}. 
\]

\subsection{Graphs and walks}\label{subsection:GandW}

In this paper, a \textit{graph} means a directed weighted graph which may have loops and multiple edges. 
A graph $\Gamma = (V_{\Gamma}, E_{\Gamma}, s_{\Gamma}, t_{\Gamma}, w_{\Gamma})$ 
consists of the set $V_{\Gamma}$ of vertices, the set $E_{\Gamma}$ of edges, 
the source function $s_{\Gamma}: E_{\Gamma} \to V_{\Gamma}$, 
the target function $t_{\Gamma}: E_{\Gamma} \to V_{\Gamma}$, and 
the (integer) weight function $w_{\Gamma}: E_{\Gamma} \to \mathbb{Z}_{> 0}$. 
We often abbreviate $s_{\Gamma}$, $t_{\Gamma}$ and $w_{\Gamma}$ to $s$, $t$ and $w$ 
when no confusion can arise.

\begin{defi}
Let $\Gamma = (V_{\Gamma}, E_{\Gamma}, s, t, w)$ be a graph. 

\begin{enumerate}
\item 
$\Gamma$ is called to be \textit{unweighted} if $w(e) = 1$ holds for every $e \in E_{\Gamma}$. 
$\Gamma$ is called to be \textit{undirected} when 
there exists an involution $E_{\Gamma} \to E_{\Gamma};\ e \mapsto e'$
such that $s(e)=t(e')$, $t(e)=s(e')$ and $w(e)=w(e')$. 
$\Gamma$ is called to be \textit{locally finite} when for all $x \in V_{\Gamma}$, 
there are only finitely many edges $e$ satisfying $s(e)=x$ and only finitely many edges $e$ satisfying $t(e)=x$. 

\item
A \textit{walk} $p$ in $\Gamma$ is a sequence $e_1 e_2 \cdots e_{\ell}$ of edges $e_i$ of $\Gamma$ satisfying 
$t(e_i) = s(e_{i+1})$ for each $i = 1, \ldots , \ell -1$. 
We define 
\[
s(p) := s(e_1), \quad
t(p) := t(e_{\ell}), \quad
w(p) := \sum _{i = 1} ^{\ell} w(e_i), \quad 
\operatorname{length}(p) := \ell. 
\]
Note that we have $w(p) = \operatorname{length}(p)$ if $\Gamma$ is unweighted.

We say that ``$p$ is a walk from $x$ to $y$" when $x = s(p)$ and $y = t(p)$. 
We also define the \textit{support} $\operatorname{supp}(p) \subset V_{\Gamma}$ of $p$ by
\[
\operatorname{supp}(p) := \{ s(e_1), t(e_1), t(e_2), \ldots , t(e_{\ell}) \} \subset V_{\Gamma}. 
\]

By convention, each vertex $v \in V_{\Gamma}$ is also considered as a walk of length $0$. 
This is called the \textit{trivial walk} at $v$ and denoted by $\emptyset _v$: i.e., 
we define 
\[
s(\emptyset _v) := v, \ \ 
t(\emptyset _v) := v, \ \ 
w(\emptyset _v) := 0, \ \  
\operatorname{length}(\emptyset _v) := 0, \ \ 
\operatorname{supp}(\emptyset _v) := \{ v \}.
\]

\item 
A \textit{path} in $\Gamma$ is a walk $e_1 \cdots e_{\ell}$ such that $s(e_1), t(e_1), t(e_2), \ldots, t(e_{\ell})$ are distinct. 
A walk of length $0$ is considered as a path. 

\item
A \textit{cycle} in $\Gamma$ is a walk $e_1 \cdots e_{\ell}$ with $s(e_1) = t(e_{\ell})$ 
such that $t(e_1), t(e_2), \ldots, t(e_{\ell})$ are distinct. 
A walk of length $0$ is not considered as a cycle. 
$\operatorname{Cyc}_{\Gamma}$ denotes the set of cycles in $\Gamma$. 

\item 
For $x, y \in V_{\Gamma}$, $d_{\Gamma}(x, y) \in \mathbb{Z}_{\ge 0} \cup \{ \infty \}$ denotes the smallest weight $w(p)$ of any walk $p$ from $x$ to $y$. 
By convention, we define $d_{\Gamma}(x, y) = \infty$ when there is no walk from $x$ to $y$. 
A graph $\Gamma$ is said to be \textit{strongly connected} 
when we have $d_{\Gamma}(x, y) < \infty$ for all $x, y \in V_{\Gamma}$. 
When $\Gamma$ is undirected, we have $d_{\Gamma}(x,y) = d_{\Gamma}(y,x)$ for all $x,y \in V_{\Gamma}$. 

\item 
$C_1(\Gamma, \mathbb{Z})$ denotes the group of $1$-chains on $\Gamma$ with coefficients in $\mathbb{Z}$, i.e., 
$C_1(\Gamma, \mathbb{Z})$ is the free abelian group generated by $E_{\Gamma}$. 
For a walk $p = e_1 \cdots e_{\ell}$ in $\Gamma$, let $\langle p \rangle$ denote the $1$-chain 
$\sum _{i = 1} ^{\ell} e_i \in C_1(\Gamma, \mathbb{Z})$. 
$H_1(\Gamma, \mathbb{Z}) \subset C_1(\Gamma, \mathbb{Z})$ denotes the $1$-st homology group, i.e., 
$H_1(\Gamma, \mathbb{Z})$ is a subgroup generated by $\langle p \rangle$ for $p \in \operatorname{Cyc}_{\Gamma}$. 
We refer the reader to \cite{Sunada} for more detail. 
\end{enumerate}
\end{defi}

\subsection{Periodic graphs}

\begin{defi}\label{defi:pg}
Let $n$ be a positive integer. 
An \textit{$n$-dimensional periodic graph} $(\Gamma, L)$ is a graph $\Gamma$ and 
a free abelian group $L \simeq \mathbb{Z}^n$ of rank $n$ with the following two conditions:
\begin{itemize}
\item $L$ freely acts on both $V_\Gamma$ and $E_\Gamma$, 
and their quotients $V_\Gamma / L$ and $E_\Gamma / L$ are finite sets. 
\item This action preserves the edge relations, i.e.,\ for any $u \in L$ and $e \in E_\Gamma$, 
we have $s_\Gamma(u(e))=u(s_\Gamma(e))$, $t_\Gamma(u(e))=u(t_\Gamma(e))$ and 
$w_{\Gamma}(u(e))=w_{\Gamma}(e)$.
\end{itemize}
Then, $L$ is called the \textit{period lattice} of $\Gamma$. 
Note that $\Gamma$ automatically becomes a locally finite graph. 
\end{defi}

If $(\Gamma, L)$ is an $n$-dimensional periodic graph, 
then the \textit{quotient graph} $\Gamma /L = (V_{\Gamma/L}, E_{\Gamma/L}, s_{\Gamma/L},t_{\Gamma/L},w_{\Gamma/L})$ is defined by 
$V_{\Gamma/L} := V_{\Gamma} /L$, $E_{\Gamma/L} := E_{\Gamma} /L$, 
and the functions $s_{\Gamma/L}: E_\Gamma/L \to V_\Gamma/L$, 
$t_{\Gamma/L}: E_\Gamma/L \to V_\Gamma/L$, and 
$w_{\Gamma/L}: E_\Gamma/L \to \mathbb{Z}_{>0}$ induced from $s_{\Gamma}$, $t_{\Gamma}$, and $w_{\Gamma}$. 
Note that the functions $s_{\Gamma/L}$, $t_{\Gamma/L}$ and $w_{\Gamma/L}$ 
are well-defined due to the second condition in Definition \ref{defi:pg}.

\begin{defi}\label{defi:peri}
Let $(\Gamma,L)$ be an $n$-dimensional periodic graph.
\begin{enumerate}
\item Since $L$ is an abelian group, we use the additive notation: 
for $u \in L$, $x \in V_{\Gamma}$, $e \in E_{\Gamma}$ and a walk $p = e_1 \cdots e_{\ell}$,
$u + x$, $u + e$ and $u + p$ denote their translations by $u$. 

\item 
For any $x \in V_{\Gamma}$ and $e \in E_{\Gamma}$, 
let $\overline{x} \in V_{\Gamma /L}$ and $\overline{e} \in E_{\Gamma /L}$ denote their images in 
$V_{\Gamma /L} = V_{\Gamma} /L$ and $E_{\Gamma /L} = E_{\Gamma} /L$. 
For a walk $p = e_1 \cdots e_{\ell}$ in $\Gamma$, 
let $\overline{p} := \overline{e_1} \cdots \overline{e_{\ell}}$ denote its image in $\Gamma /L$. 

\item 
When $x, y \in V_{\Gamma}$ satisfy $\overline{x} = \overline{y}$, there exists an element $u \in L$ such that $u + x = y$. 
Since the action is free, such $u \in L$ uniquely exists and is denoted by $y - x$. 

\item 
For a walk $p$ in $\Gamma$ with $\overline{s(p)} = \overline{t(p)}$, we define 
\[
\operatorname{vec}(p) := t(p) - s(p) \in L. 
\]
\end{enumerate}
\end{defi}

\begin{defi}\label{defi:pr}
Let $(\Gamma,L)$ be an $n$-dimensional periodic graph.
We define $L_{\mathbb{R}} := L \otimes _{\mathbb{Z}} \mathbb{R}$. 
\begin{enumerate}
\item 
A \textit{periodic realization} $\Phi: V_{\Gamma} \to L_{\mathbb{R}}$ is a map satisfying
$\Phi(u+x) = u + \Phi(x)$ for any $u \in L$ and $x \in V _{\Gamma}$. 
When we fix an injective periodic realization of $\Phi: V _{\Gamma} \to L_{\mathbb{R}}$, 
we sometimes identify $V _{\Gamma}$ with the subset of $L_{\mathbb{R}}$. 

\item 
Let $\Phi$ be a periodic realization of $(\Gamma, L)$. 
For an edge $e$ and a walk $p$ in $\Gamma$, we define 
\[
\operatorname{vec}_{\Phi}(e) := \Phi(t(e)) - \Phi(s(e)) \in L_{\mathbb{R}}, \quad
\operatorname{vec}_{\Phi}(p) := \Phi(t(p)) - \Phi(s(p)) \in L_{\mathbb{R}}. 
\]
It is easy to see that the value $\operatorname{vec}_{\Phi}(e) \in L_{\mathbb{R}}$ depends only on the class $\overline{e} \in E_{\Gamma /L}$, 
and therefore, 
the map 
\[
\mu _{\Phi} : E_{\Gamma /L} \to L_{\mathbb{R}};\quad \overline{e} \mapsto \operatorname{vec}_{\Phi}(e)
\]
is well-defined. 
It can be extended to a homomorphism 
\[
\mu _{\Phi}: C_1(\Gamma/L, \mathbb{Z}) \to L_{\mathbb{R}}; \quad 
\sum a_i \overline{e_i} \mapsto \sum a_i \mu _{\Phi} (\overline{e_i}). 
\]
By construction, it satisfies $\mu _{\Phi} (\langle \overline{p} \rangle) = \operatorname{vec}_{\Phi}(p)$ for any walk $p$ in $\Gamma$. 
\end{enumerate}
\end{defi}

\begin{rmk}\label{rmk:finmap}\hfill
\begin{enumerate} 
\item 
An injective periodic realization of $(\Gamma, L)$ always exists. 
To see this fact, we take any injective map $V_{\Gamma} / L \to L_{\mathbb{R}}/L$. 
It is possible because $\# (V_{\Gamma} / L) < \infty$. 
Then, any injective map $V_{\Gamma} / L \to L_{\mathbb{R}}/L$ lifts to 
an injective periodic realization $V_{\Gamma} \to L_{\mathbb{R}}$. 

\item 
Even if a periodic realization $\Phi$ is not injective, 
the map $V_{\Gamma} \to L_{\mathbb{R}} \times V_{\Gamma}/L;\ x \mapsto \bigl( \Phi(x), \overline{x} \bigr)$ is always injective. 

\item
In Definition \ref{defi:pr}(2), we have $\operatorname{vec}_{\Phi}(p) = \operatorname{vec}(p)$
for any $p$ with $\overline{s(p)} = \overline{t(p)}$.
\end{enumerate}
\end{rmk}

\begin{ex}[{cf.\ \cite{Wakatsuki}, \cite{NSMN21}*{Figure 3}}]\label{ex:WG}
The \textit{Wakatsuki graph} is an undirected unweighted graph 
$\Gamma = (V_\Gamma, E_\Gamma, s_{\Gamma}, t_{\Gamma})$ defined by
\[
V_\Gamma = \{ v_0,v_1,v_2 \} \times \mathbb{Z}^2, \quad
E_\Gamma = \{ e_0,e_1,\ldots,e_9 \} \times \mathbb{Z}^2, 
\]
and 
\begin{gather*}
s_\Gamma(e_0,(x,y))=s_\Gamma(e_1,(x,y))=s_\Gamma(e_2,(x,y))=s_\Gamma(e_3,(x,y))=(v_0,(x,y)), \\
s_\Gamma(e_4,(x,y))=s_\Gamma(e_5,(x,y))=s_\Gamma(e_6,(x,y))=s_\Gamma(e_7,(x,y))=(v_1,(x,y)), \\
s_\Gamma(e_8,(x,y))=s_\Gamma(e_9,(x,y))=(v_2,(x,y)), \\
t_\Gamma(e_0,(x,y))=(v_1,(x,y)),\quad t_\Gamma(e_1,(x,y))=(v_1,(x-1,y)),\\ 
t_\Gamma(e_2,(x,y))=(v_1,(x-1,y-1)),\quad t_\Gamma(e_3,(x,y))=(v_2,(x,y)),\\
t_\Gamma(e_4,(x,y))=(v_0,(x,y)),\quad t_\Gamma(e_5,(x,y))=(v_0,(x+1,y)),\\
t_\Gamma(e_6,(x,y))=(v_0,(x+1,y+1)),\quad t_\Gamma(e_7,(x,y))=(v_2,(x,y)),\\
t_\Gamma(e_8,(x,y))=(v_0,(x,y)),\quad t_\Gamma(e_9,(x,y))=(v_1,(x,y))
\end{gather*}
for any $(x,y) \in \mathbb{Z}^2$. 
Then, $\Gamma$ admits an action of $L=\mathbb{Z}^2$ by 
\begin{align*}
(a,b) + (v_i, (x,y)) &:= (v_i, (x+a,y+b)), \\ 
(a,b) + (e_j, (x,y)) &:= (e_j, (x+a,y+b))
\end{align*}
for each $(a,b), (x,y) \in \mathbb{Z}^2$, $0 \le i \le 2$ and $0 \le j \le 9$. 
By this action, $(\Gamma, L)$ becomes a $2$-dimensional periodic graph. 
A periodic realization $\Phi$ of $(\Gamma, L)$ is defined by
\[
	\Phi(v_0,(x,y))=(x,y),\,\, 
	\Phi(v_1,(x,y))=\left(x+\frac{1}{2},y+\frac{1}{2}\right),\,\, 
	\Phi(v_2,(x,y))=\left(x+\frac{1}{2},y\right)
\]
Let $v'_i := (v_i, (0,0))$ for each $0 \le i \le 2$, and 
let $e'_j := (e_j, (0,0))$ for each $0 \le j \le 9$. 
Then, the realization and the quotient graph $\Gamma / L$ can be illustrated 
as in Figures \ref{fig:WG1}, \ref{fig:WG2} and \ref{fig:WG3}. 
\begin{figure}[htbp]
\centering
\includegraphics[height=6cm]{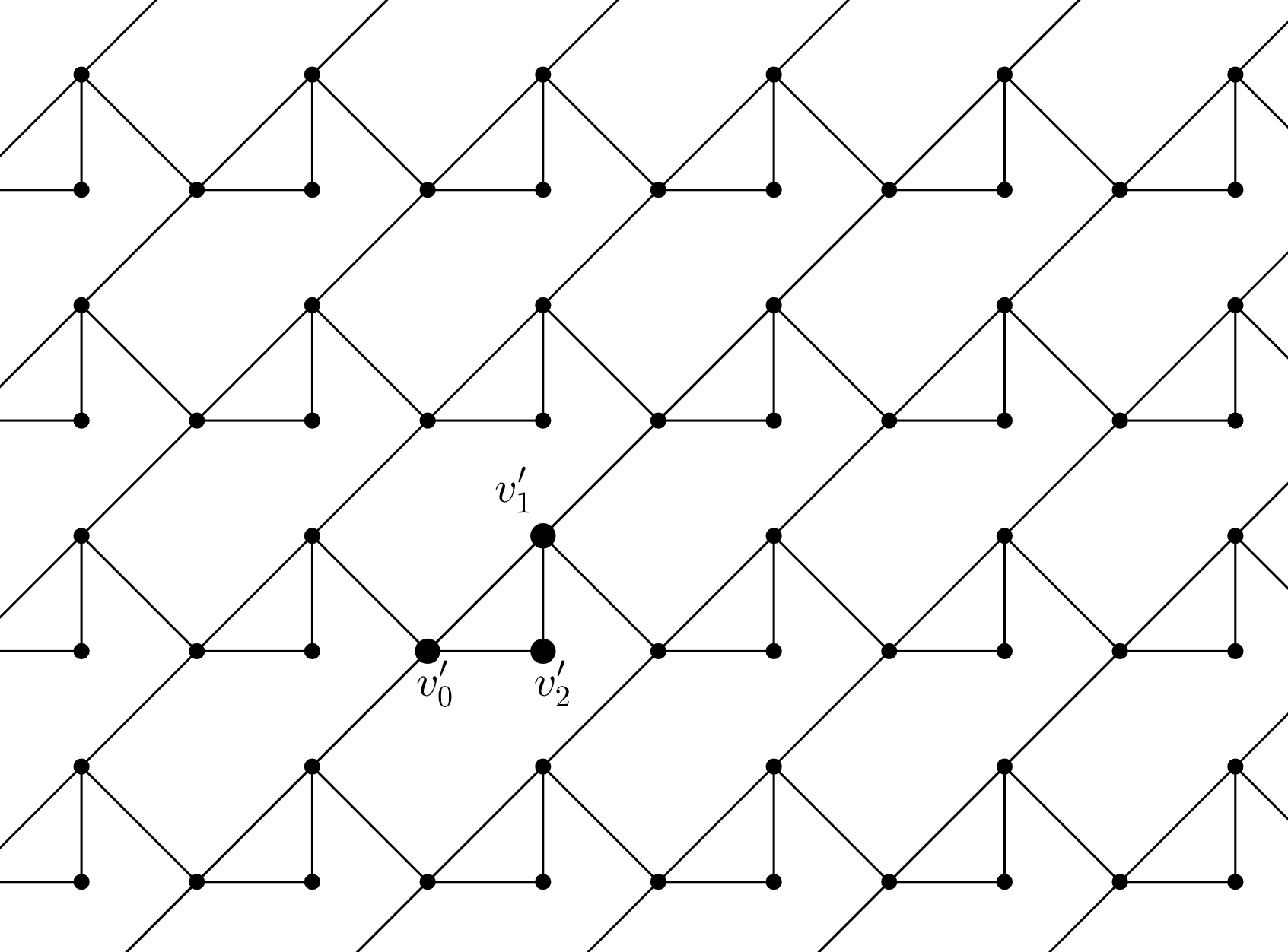}

\vspace{\zub}

\caption{The Wakatsuki graph $\Gamma$.}
\label{fig:WG1}
\end{figure}

\vspace{\zuc}

\begin{figure}[htbp]
\begin{tabular}{cc}
\begin{minipage}{0.49\hsize}
\centering
\includegraphics[height=6cm]{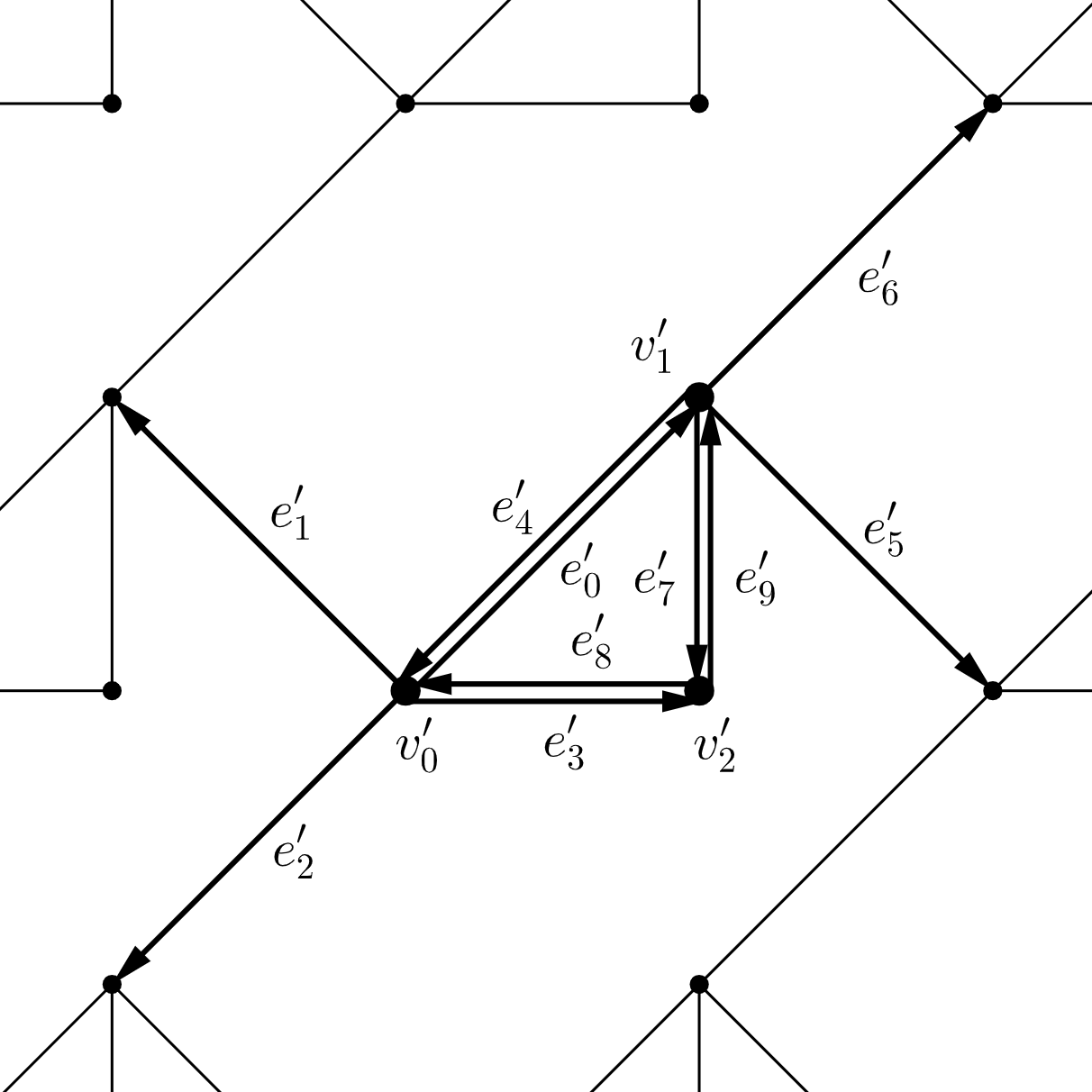}

\vspace{\zub}

\caption{$e'_0, \ldots, e'_9$.}
\label{fig:WG2}
\end{minipage}&
\begin{minipage}{0.49\hsize}
\centering
\includegraphics[width=6cm]{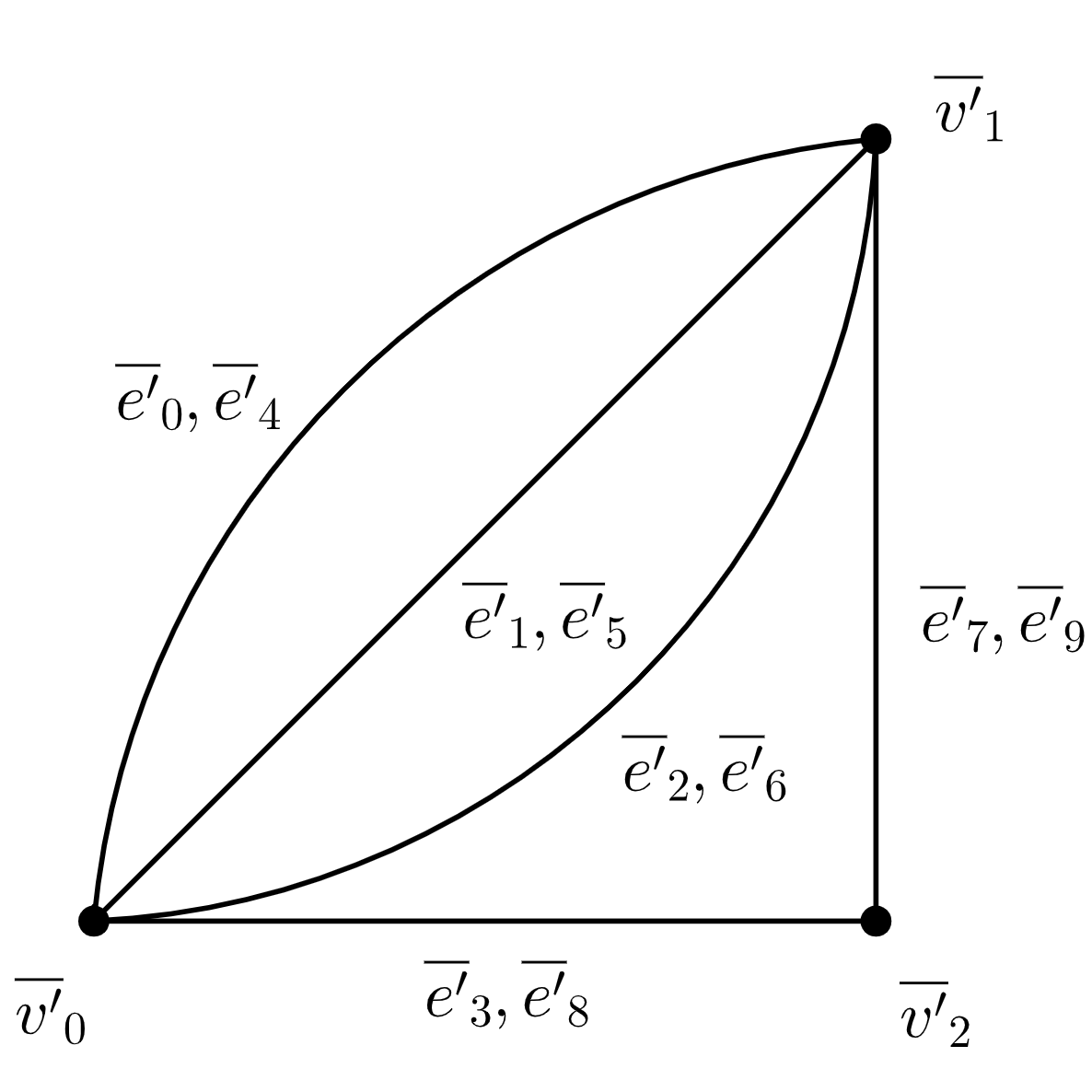}

\vspace{\zub}

\caption{$\Gamma / L$.}
\label{fig:WG3}
\end{minipage}
\end{tabular}
\end{figure}
\end{ex}

\begin{lem}\label{lem:mu}
Let $(\Gamma,L)$ be an $n$-dimensional periodic graph. 
Let $\mu _{\Phi}$ be the homomorphism defined in Definition \ref{defi:pr}(2). 
The restriction map $\mu_{\Phi} |_{H_1(\Gamma/L, \mathbb{Z})}: H_1(\Gamma/L, \mathbb{Z}) \to L_{\mathbb{R}}$ 
is independent of the choice of the periodic realization $\Phi$. 
Furthermore, its image is contained in $L$. 
\end{lem}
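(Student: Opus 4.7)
The plan is to reduce both assertions to the case of a single cycle in the quotient graph. Since $H_1(\Gamma/L, \mathbb{Z})$ is by definition generated by the classes $\langle \overline{p}\rangle$ for $\overline{p} \in \operatorname{Cyc}_{\Gamma/L}$, and since both properties we want (belonging to $L$, and being independent of $\Phi$) are preserved under $\mathbb{Z}$-linear combinations, it suffices to prove that for every cycle $\overline{p} \in \operatorname{Cyc}_{\Gamma/L}$, the element $\mu_{\Phi}(\langle \overline{p}\rangle) \in L_{\mathbb{R}}$ lies in $L$ and does not depend on $\Phi$.

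Given such a cycle $\overline{p} = \overline{e}_1 \cdots \overline{e}_\ell$ in $\Gamma/L$, I would first lift it to an honest walk $p = e_1 \cdots e_\ell$ in $\Gamma$. Pick any $e_1 \in E_{\Gamma}$ representing $\overline{e}_1$; then by the freeness of the $L$-action on $V_\Gamma$ and $E_\Gamma$, each successive $e_{i+1}$ representing $\overline{e}_{i+1}$ is uniquely determined by requiring $s(e_{i+1}) = t(e_i)$. The fact that $\overline{p}$ is a cycle in $\Gamma/L$ yields $\overline{s(p)} = \overline{s(e_1)} = \overline{t(e_\ell)} = \overline{t(p)}$, so $\operatorname{vec}(p) = t(p) - s(p) \in L$ is well-defined in the sense of Definition \ref{defi:peri}(3).

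Next, unwinding the homomorphism $\mu_{\Phi}$ of Definition \ref{defi:pr}(2), I would compute
\[
\mu_{\Phi}(\langle \overline{p}\rangle) \;=\; \sum_{i=1}^{\ell} \mu_{\Phi}(\overline{e}_i) \;=\; \sum_{i=1}^{\ell} \operatorname{vec}_{\Phi}(e_i) \;=\; \operatorname{vec}_{\Phi}(p),
\]
where the last equality is a telescoping sum using $t(e_i) = s(e_{i+1})$ together with the definition $\operatorname{vec}_{\Phi}(e_i) = \Phi(t(e_i)) - \Phi(s(e_i))$. Now Remark \ref{rmk:finmap}(3) gives $\operatorname{vec}_{\Phi}(p) = \operatorname{vec}(p) = t(p) - s(p)$, which manifestly lies in $L$ and is defined purely in terms of the $L$-action on $V_{\Gamma}$, with no reference to $\Phi$.

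The only loose end to tie up is well-definedness of the lift: two lifts of the same $\overline{p}$ differ by a global translation by some $u \in L$, under which both $s(p)$ and $t(p)$ shift by $u$, so $t(p) - s(p)$ is unchanged. I do not expect any serious obstacle here; the argument is essentially a mechanical unwinding of the definitions, with Remark \ref{rmk:finmap}(3) doing the real work of eliminating $\Phi$.
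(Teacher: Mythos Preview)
Your proof is correct and follows essentially the same approach as the paper: reduce to generators $\langle q\rangle$ for $q\in\operatorname{Cyc}_{\Gamma/L}$, lift $q$ to a walk $p$ in $\Gamma$, and observe that $\mu_\Phi(\langle q\rangle)=\operatorname{vec}_\Phi(p)=t(p)-s(p)\in L$ is independent of $\Phi$. The only differences are cosmetic: the paper invokes the identity $\mu_\Phi(\langle\overline{p}\rangle)=\operatorname{vec}_\Phi(p)$ directly from Definition~\ref{defi:pr}(2) rather than redoing the telescoping, and it does not bother to discuss independence of the lift (which, as you note, is immediate).
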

\begin{proof}
Since $H_1(\Gamma/L, \mathbb{Z})$ is generated by $\langle q \rangle$ for $q \in \operatorname{Cyc}_{\Gamma /L}$, 
it is sufficient to show that $\mu_{\Phi} (\langle q \rangle) \in L$ 
and that the value $\mu_{\Phi} (\langle q \rangle)$ is independent of the choice of $\Phi$. 
Take any walk $p$ in $\Gamma$ such that $\overline{p} = q$. 
Since $\overline{t(p)} = \overline{s(p)}$, we have 
\[
\mu_{\Phi} (\langle q \rangle) = \operatorname{vec}_{\Phi}(p) = \Phi(t(p)) - \Phi(s(p)) = t(p) - s(p) \in L, 
\]
and this is independent of the choice of $\Phi$. 
\end{proof}

\begin{defi}\label{defi:mu}
Let $(\Gamma, L)$ be an $n$-dimensional periodic graph. 
We define $\mu : H_1(\Gamma/L, \mathbb{Z}) \to L$ as the restriction of 
$\mu _{\Phi}$ in Definition \ref{defi:pr}(2) to $H_1(\Gamma/L, \mathbb{Z})$. 
Note that this restriction map is well-defined by Lemma \ref{lem:mu}. 
\end{defi}

\begin{rmk}\label{rmk:mu}
The homomorphism $\mu$ in Definition \ref{defi:mu} coincides with $\mu$ defined in \cite{Sunada}*{Section 6.1}. 
\end{rmk}

\begin{ex}
In the Wakatsuki graph (see Example \ref{ex:WG}), 
the walks $\overline{e'_5} \ \overline{e'_0}$ and $\overline{e'_6} \ \overline{e'_3} \ \overline{e'_9}$ in $\Gamma / L$ are examples of cycles, and we have 
\[
\mu \left( \left \langle \overline{e'_5} \ \overline{e'_0} \right \rangle \right) = (1,0), \quad 
\mu \left( \left \langle \overline{e'_6} \ \overline{e'_3} \ \overline{e'_9} \right \rangle \right) = (1,1). 
\]
\end{ex}

We finish this subsection with some observations on the decomposition and the composition of walks.

\begin{defi}
Let $(\Gamma, L)$ be an $n$-dimensional periodic graph. 
Let $q_0$ be a path in $\Gamma /L$, and let $q_1, \ldots, q_{\ell} \in \operatorname{Cyc}_{\Gamma /L}$ be cycles. 
The sequence $(q_0, q_1, \ldots, q_{\ell})$ is called \textit{walkable} 
if there exists a walk $q'$ in $\Gamma / L$ such that 
$\langle q' \rangle = \sum _{i = 0} ^{\ell} \langle q_i \rangle$. 
\end{defi}

\begin{lem}\label{lem:bunkai}
Let $(\Gamma, L)$ be an $n$-dimensional periodic graph. 
\begin{enumerate}
\item 
For a walk $q'$ in $\Gamma / L$, there exists a walkable sequence $(q_0, q_1, \ldots , q_{\ell})$ such that 
$\langle q' \rangle = \sum _{i = 0} ^{\ell} \langle q_i \rangle$. 

\item 
Let $q_0$ be a path in $\Gamma /L$, and let $q_1, \ldots, q_{\ell} \in \operatorname{Cyc}_{\Gamma /L}$ be cycles. 
Then, $(q_0, q_1, \ldots, q_{\ell})$ is walkable if and only if 
there exists a permutation $\sigma:  \{ 1,2, \ldots, \ell \} \to \{ 1,2, \ldots, \ell \}$ such that
\[
\biggl( \operatorname{supp}(q_0) \cup \bigcup _{1 \le i \le k} \operatorname{supp}(q_{\sigma(i)}) \biggr) 
\cap \operatorname{supp}(q_{\sigma(k+1)}) \not = \emptyset
\] 
holds for any $0 \le k \le \ell -1$.

\end{enumerate}
\end{lem}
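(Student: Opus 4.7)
For part (1), the plan is a direct induction on $\operatorname{length}(q')$. Writing $q' = e_1 \cdots e_m$, consider the vertex sequence $v_0 := s(e_1)$, $v_j := t(e_j)$ for $1 \le j \le m$. If these are all distinct then $q'$ is itself a path and we take $\ell = 0$, $q_0 = q'$. Otherwise pick the smallest $j$ for which $v_j = v_i$ for some $i < j$; by minimality of $j$, the subwalk $e_{i+1} \cdots e_j$ is a cycle $q^*$, and $q'' := e_1 \cdots e_i e_{j+1} \cdots e_m$ is a strictly shorter walk. Apply the induction hypothesis to $q''$ and append $q^*$; walkability of the resulting sequence is witnessed by $q'$ itself.

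For the backward direction of part (2), I would induct on $\ell$. The case $\ell = 0$ is immediate since $q_0$ is already a walk. For the inductive step, restrict $\sigma$ to its first $\ell - 1$ values and apply the induction hypothesis to obtain a walk $q''$ with $\langle q'' \rangle = \langle q_0 \rangle + \sum_{i=1}^{\ell-1} \langle q_{\sigma(i)} \rangle$. The hypothesis supplies a vertex $v \in \operatorname{supp}(q_{\sigma(\ell)})$ lying in $\operatorname{supp}(q_0) \cup \bigcup_{i \le \ell -1}\operatorname{supp}(q_{\sigma(i)})$; this union is contained in $\operatorname{supp}(q'')$, since every endpoint of every edge of $q''$ is visited and $s(q_0) = s(q'')$. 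Cyclically rotate $q_{\sigma(\ell)}$ to a cycle $\tilde{q}$ starting and ending at $v$, and splice $\tilde{q}$ into $q''$ at a position where $q''$ visits $v$. The result is a walk realizing the required sum of $1$-chains.

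For the forward direction of part (2), form the \emph{intersection graph} $G$ on the abstract node set $\{q_0, q_1, \ldots, q_\ell\}$ by joining $q_i$ and $q_j$ whenever $\operatorname{supp}(q_i) \cap \operatorname{supp}(q_j) \ne \emptyset$; once $G$ is shown to be connected, a breadth-first traversal from $q_0$ produces the required permutation $\sigma$. Let $C_0$ be the connected component of $q_0$ in $G$ and set $V_{C_0} := \bigcup_{q_i \in C_0} \operatorname{supp}(q_i)$. If $q_i \notin C_0$ then $\operatorname{supp}(q_i)$ is disjoint from $V_{C_0}$, so every edge occurring in any $q_i$ has both endpoints in $V_{C_0}$ or both outside it. A witnessing walk $q'$ starts at $s(q_0) \in V_{C_0}$ and consecutive edges share a vertex, so $q'$ never crosses out of $V_{C_0}$; hence every $q_i$ contributing an edge lies in $C_0$, and since every cycle has positive length this forces all $q_i$ to lie in $C_0$. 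The main obstacle is the rotation-and-splice step in the backward direction, together with the edge case where $q_0$ is trivial; both are handled using the convention $\operatorname{supp}(\emptyset _v) = \{v\}$ fixed in the preliminaries.
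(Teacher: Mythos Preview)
Your argument follows the same inductive scheme the paper sketches (the paper's proof is literally one sentence per part), and part~(1) together with the backward direction of~(2) are fine; the only wrinkle in the backward step is that when you ``apply the induction hypothesis to obtain a walk $q''$'' you should phrase the induction so that $q''$ is \emph{constructed} with $s(q'') = s(q_0)$ (which your splice preserves), rather than merely invoking existence of some witnessing walk.

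The real gap is in the forward direction, where you assert that a witnessing walk $q'$ starts at $s(q_0)$. When $q_0$ has positive length this is true but needs a one-line boundary argument you have not written: comparing $0$-chains, $\partial\langle q'\rangle = \partial\langle q_0\rangle$ because each cycle contributes zero, so $t(q') - s(q') = t(q_0) - s(q_0)$ with $s(q_0)\ne t(q_0)$, forcing $s(q') = s(q_0)$. When $q_0 = \emptyset_v$ is trivial, however, the assertion is simply false, and in fact the forward implication \emph{as literally stated} breaks: take $q_1$ any cycle with $v\notin\operatorname{supp}(q_1)$; then $(q_0,q_1)$ is walkable (witnessed by $q' = q_1$), yet $\operatorname{supp}(q_0)\cap\operatorname{supp}(q_1)=\emptyset$. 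The convention $\operatorname{supp}(\emptyset_v)=\{v\}$ that you invoke does not rescue this---it is precisely what makes the intersection empty. In the paper's applications this degenerate case never arises, because walkable sequences there always come from decomposing an actual walk via part~(1), and then $\operatorname{supp}(q_0)\subset\operatorname{supp}(q')$ automatically; but your proof of the forward direction should either restrict to non-trivial $q_0$ with the boundary argument above, or note this caveat explicitly.
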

\begin{proof}
For any walk $q'$ in $\Gamma /L$, 
if $q'$ is not a path, then 
there exist a walk $q''$ and a cycle $q_1$ in $\Gamma /L$ 
such that $\langle q' \rangle = \langle q'' \rangle + \langle q_1 \rangle$. 
Therefore, the assertion (1) follows by the induction on the length of $q'$. 
The assertion (2) also follows from the induction. 
\end{proof}

\begin{rmk}\label{rmk:bunkai} \hfill
\begin{enumerate}
\item
If $q'$ in Lemma \ref{lem:bunkai}(1) satisfies $s(q') = t(q')$, then $q_0$ must be a trivial path 
(i.e., $\operatorname{length}(q_0)=0$). 

\item
If a walk $q'$ in $\Gamma /L$ and a vertex $x_0 \in V_{\Gamma}$ satisfy $s(q') = \overline{x_0}$, 
then there exists the unique walk $p$ in $\Gamma$ satisfying $\overline{p} = q'$ and $s(p) = x_0$ 
(we call such $p$ the \textit{lift} of $q'$ with initial point $x_0$). 
Therefore, for a walkable sequence $(q_0, q_1, \ldots, q_{\ell})$ and a vertex $x_0 \in V_{\Gamma}$ satisfying $s(q_0) = \overline{x_0}$, 
there exists a walk $p$ in $\Gamma$ such that 
$s(p) = x_0$ and $\langle \overline{p} \rangle = \sum _{i = 0} ^{\ell} \langle q_i \rangle$. 
Conversely, for a walk $p$ in $\Gamma$, applying Lemma \ref{lem:bunkai}(1) to $\overline{p}$, 
there exists a walkable sequence $(q_0, q_1, \ldots, q_{\ell})$ satisfying 
$\langle \overline{p} \rangle = \sum _{i = 0} ^{\ell} \langle q_i \rangle$. 

\item
For a walk $p$ in $\Gamma$ and a walkable sequence $(q_0, q_1, \ldots, q_{\ell})$ satisfying 
$\langle \overline{p} \rangle = \sum _{i = 0} ^{\ell} \langle q_i \rangle$,  
it follows that 
\[
w(p) = 
\sum _{i=0}^{\ell} w(q_i), \qquad
\operatorname{length}(p) = 
\sum _{i=0}^{\ell} \operatorname{length}(q_i). 
\]
Furthermore, if we fix a periodic realization $\Phi$, we also have 
\[
\operatorname{vec}_{\Phi}(p) = \sum _{i=0}^{\ell} \mu_{\Phi} ( \langle q_i \rangle). 
\]
\end{enumerate}
\end{rmk}

\begin{ex}\label{ex:path}
In the Wakatsuki graph (see Example \ref{ex:WG}), we consider a walk $p$ as in Figure \ref{fig:p}. 
Then, the image of $p$ in the quotient graph $\Gamma /L$ is given by
\[
\overline{p} = \overline{e'_3} \ \overline{e'_9} \ \overline{e'_5} \ \overline{e'_0} \ \overline{e'_6} \ \overline{e'_3} \ 
\overline{e'_9} \ \overline{e'_6} \ \overline{e'_1} \ \overline{e'_6} \ \overline{e'_3} \ \overline{e'_9}, 
\]
Then, we have two decompositions 
\begin{align*}
\langle \overline{p} \rangle 
&= \left \langle \overline{e'_1} \right \rangle + \left \langle \overline{e'_5} \ \overline{e'_0} \right \rangle
+ 3 \left \langle \overline{e'_6} \ \overline{e'_3} \ \overline{e'_9} \right \rangle, \\
\langle \overline{p} \rangle 
&= \left \langle \overline{e'_3} \ \overline{e'_9} \right \rangle + \left \langle \overline{e'_5} \ \overline{e'_0} \right \rangle
+ \left \langle \overline{e'_6} \ \overline{e'_1} \right \rangle + 2 \left \langle \overline{e'_6} \ \overline{e'_3} \ \overline{e'_9} \right \rangle. 
\end{align*}
Here, $\overline{e'_1}$ and $\overline{e'_3} \ \overline{e'_9}$ are paths in $\Gamma / L$, and 
$\overline{e'_5} \ \overline{e'_0}$, $\overline{e'_6} \ \overline{e'_3} \ \overline{e'_9}$, and $\overline{e'_6} \ \overline{e'_1}$
are cycles in $\Gamma / L$. 
\begin{figure}[htbp]
\centering
\includegraphics[height=6cm]{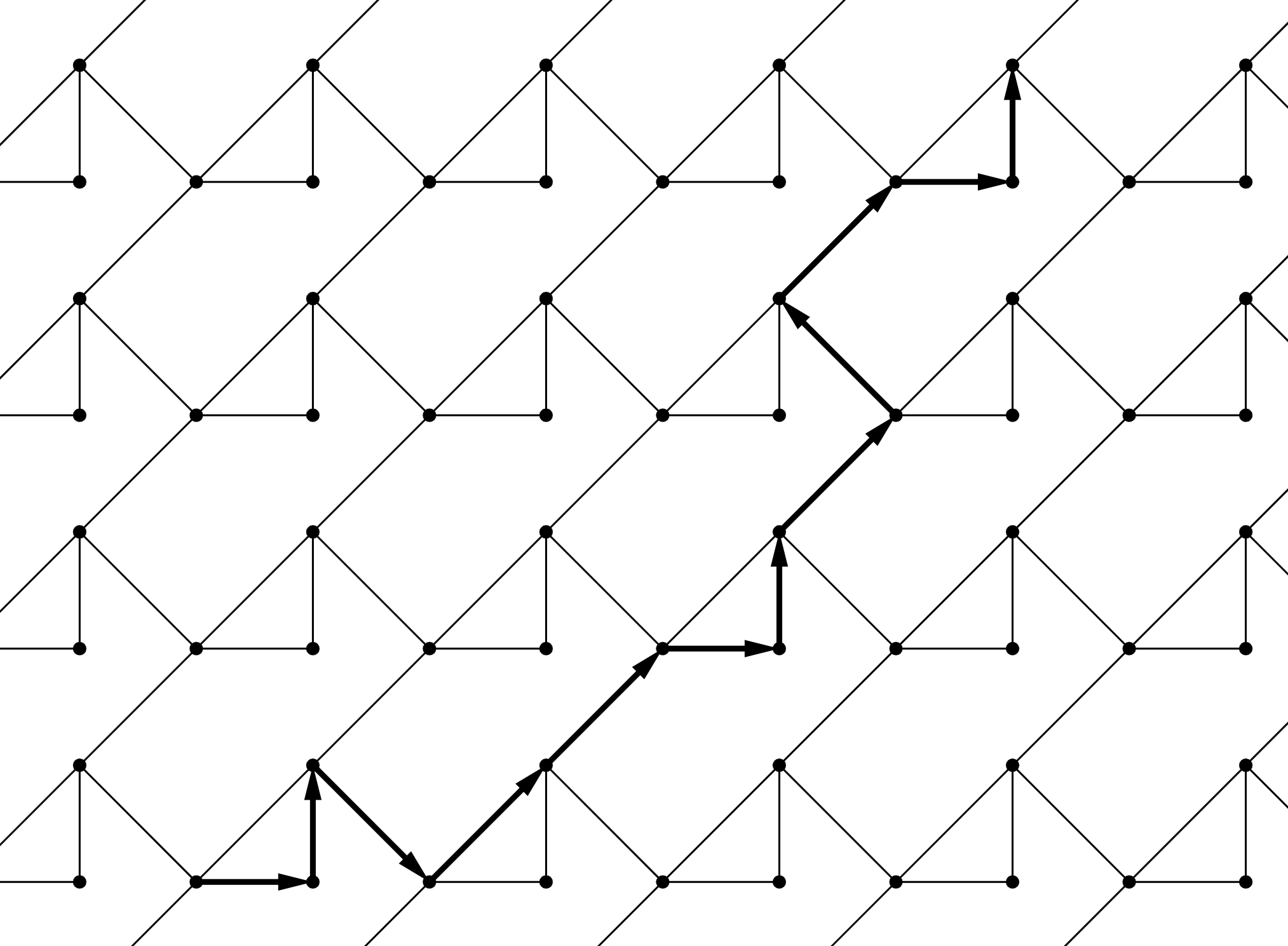}

\vspace{\zub}

\caption{The walk $p$.}
\label{fig:p}
\end{figure}
\end{ex}

\subsection{Growth sequences of periodic graphs}\label{subsection:CS}
Let $\Gamma$ be a locally finite graph, and let $x_0 \in V_{\Gamma}$. 
For $i \in \mathbb{Z}_{\ge 0}$, we define subsets $B_{\Gamma, x_0, i}, S_{\Gamma, x_0,i} \subset V _{\Gamma}$ by
\[
B_{\Gamma, x_0,i} := \{ y \in V _{\Gamma} \mid d_{\Gamma}(x_0, y) \le i \}, \quad
S_{\Gamma, x_0,i} := \{ y \in V _{\Gamma} \mid d_{\Gamma}(x_0, y) = i \}. 
\]
Let $b_{\Gamma, x_0,i} := \# B_{\Gamma, x_0,i}$ and $s_{\Gamma, x_0,i} := \# S_{\Gamma, x_0,i}$ denote their cardinalities. 
The sequence $(s_{\Gamma, x_0,i})_i$ is called the \textit{growth sequence} of $\Gamma$ with the start point $x_0$. The sequence $(b_{\Gamma, x_0,i})_i$ is called the \textit{cumulative growth sequence}. 

The \textit{growth series} $G_{\Gamma, x_0}(t)$ is the generating function 
\[
G_{\Gamma, x_0}(t) := \sum _{i \ge 0} s_{\Gamma, x_0,i} t^i
\]
of the growth sequence $(s_{\Gamma, x_0,i})_{i}$. 

\begin{rmk}
In crystallography, the growth sequence is called a \textit{coordination sequence} (see \cite{NSMN21}). 
\end{rmk}

\begin{defi}[{cf.\ \cite{Sta96}*{Chapter 0}}]\label{defi:qp}  \hfill
\begin{enumerate}
\item \label{item:f}
A function $f: \mathbb{Z} \to \mathbb{C}$ is called a \textit{quasi-polynomial} 
if there exist a positive integer $N$ and polynomials $Q_0, \ldots, Q_{N-1} \in \mathbb{C}[x]$ such that 
$f(n) = Q_i(n)$ holds for all $n \in \mathbb{Z}$ and $i \in \{ 0, \ldots , N-1 \}$ with $n \equiv i \pmod N$. 
The polynomials $Q_0, \ldots, Q_{N-1}$ are called the \textit{constituents} of $f$. 

\item 
A function $g: \mathbb{Z} \to \mathbb{C}$ is called to be of \textit{quasi-polynomial type} if 
there exists a non-negative integer $M \in \mathbb{Z}_{\ge 0}$ and a quasi-polynomial $f$ such that
$g(n) = f(n)$ holds for all $n > M$. 
The positive integer $N$ is called a \textit{quasi-period} of $g$ when $f$ is of the form in (\ref{item:f}). 
Note that the notion of quasi-period is not unique. 
The minimum quasi-period is called the \textit{period} of $g$. 
We say that the function $g$ is a \textit{quasi-polynomial on} $n \ge m$ if 
$g(n) = f(n)$ holds for $n \ge m$. 
\end{enumerate}
\end{defi}

The growth sequences of periodic graphs are known to be of quasi-polynomial type (Theorem \ref{thm:NSMN}). 
\begin{thm}[{\cite{NSMN21}*{Theorem 2.2}}]\label{thm:NSMN}
Let $(\Gamma, L)$ be a periodic graph, and let $x_0 \in V_{\Gamma}$. 
Then, the functions $b: i \mapsto b_{\Gamma, x_0, i}$ and $s: i \mapsto s_{\Gamma, x_0, i}$ are of quasi-polynomial type. 
In particular, its growth series is a rational function. 
\end{thm}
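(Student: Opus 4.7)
The plan is to reduce the counting of vertices within distance $i$ of $x_0$ to a lattice-point counting problem in a union of ``weight-sliced'' affine submonoids of $L \times \mathbb{Z}_{\ge 0}$, and then apply the theory of rational generating functions for such monoids.

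First, for each class $v \in V_{\Gamma/L}$ I would fix a lift $\tilde v \in V_\Gamma$, so that every vertex of $\Gamma$ is uniquely written as $u + \tilde v$ with $u \in L$. For each path $q_0$ in $\Gamma/L$ starting at $\overline{x_0}$ -- of which there are only finitely many, since $\Gamma/L$ is finite and paths have bounded length -- let $y_0^{q_0}$ denote the endpoint of its lift starting at $x_0$. By Lemma \ref{lem:bunkai} and Remark \ref{rmk:bunkai}, every walk $p$ from $x_0$ admits a decomposition into a walkable sequence $(q_0, q_1, \ldots, q_\ell)$ with cycles $q_i \in \operatorname{Cyc}_{\Gamma/L}$, and
\[
t(p) = y_0^{q_0} + \sum_{i=1}^\ell \mu(\langle q_i \rangle), \qquad w(p) = w(q_0) + \sum_{i=1}^\ell w(q_i).
\]
Thus $b_{\Gamma, x_0, i}$ counts the $y \in V_\Gamma$ realizable this way with total weight at most $i$ for \emph{some} choice of $q_0$.

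Next I would introduce, for each path $q_0$, the set
\[
S(q_0) := \Bigl\{ \Bigl( \sum_{i=1}^\ell \mu(\langle q_i \rangle),\ \sum_{i=1}^\ell w(q_i) \Bigr) : (q_0, q_1, \ldots, q_\ell) \text{ walkable},\ q_i \in \operatorname{Cyc}_{\Gamma/L} \Bigr\} \subset L \times \mathbb{Z}_{\ge 0},
\]
and prove the key structural claim that $S(q_0)$ is a finite union of translates of finitely generated affine submonoids of $L \times \mathbb{Z}_{\ge 0}$. The underlying reason is that once a cycle $c$ has been inserted, every other cycle sharing a vertex with the current support becomes freely insertable; an induction on ``activation order'' stratifies walkable extensions of $q_0$ into finitely many patterns, on each of which the admissible cycles form an unrestricted commutative monoid generated by a finite set.

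By standard monoid theory (essentially Stanley's theorem on rational generating functions of affine monoids), the multigraded series $\sum_{(u,w) \in S(q_0)} \mathbf{z}^u t^w$ is rational in $t$ and the $\mathbf{z}$-variables. Summing over paths $q_0$, using inclusion--exclusion to count each endpoint $y$ only once (via the subset of paths producing it), and specializing the $\mathbf{z}$-variables appropriately, yields a rational generating function whose coefficient of $t^i$ equals $b_{\Gamma, x_0, i}$; the coefficients of such a rational function are eventually a quasi-polynomial. The same conclusion follows for $s_{\Gamma, x_0, i} = b_{\Gamma, x_0, i} - b_{\Gamma, x_0, i-1}$, and rationality of the growth series $G_{\Gamma, x_0}(t)$ is then automatic. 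The main obstacle is handling the walkability condition: one must show carefully that the combinatorial restrictions encoded in Lemma \ref{lem:bunkai}(2) -- which depend on the order in which cycles are inserted -- only affect finitely many terms of $S(q_0)$ beyond a purely monoidal ``bulk'', and do not destroy the affine monoid structure required for the rational generating function argument to go through.
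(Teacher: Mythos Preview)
Your strategy---decompose walks via Lemma~\ref{lem:bunkai}, organize the resulting data as modules over finitely generated monoids in $L \times \mathbb{Z}_{\ge 0}$, then invoke rationality of Hilbert series---is exactly the paper's approach. The paper, however, indexes not by the initial path $q_0$ but by the \emph{support} $S \subset V_{\Gamma/L}$ of the whole walk: it sets $M_S$ to be the monoid generated by $(1,0)$ and all $(w(q),\mu(\langle q\rangle))$ for cycles $q$ with $\operatorname{supp}(q)\subset S$, and $X_S$ to be the set of pairs $(i,y)$ reached by some walk of weight $\le i$ with support exactly $S$. This is cleaner than your activation-order stratification because once the support is fixed, \emph{every} cycle with support in $S$ is automatically insertable, so $X_S$ is visibly an $M_S$-module; finite generation follows by bounding the length of a walk needed to first cover $S$. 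Your ``finitely many patterns'' are essentially these supports, so the arguments converge, but the paper's bookkeeping avoids the order-dependent combinatorics you flag as the main obstacle.

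There is one genuine gap in your outline: the inclusion--exclusion step. To count each vertex once you must control, for every subfamily $\Lambda$ of your index set, the intersection $\bigcap_{q_0\in\Lambda}$ of the (shifted) sets $S(q_0)$, and you need this intersection to again be a finitely generated module over a finitely generated monoid. This is not automatic and is not addressed in your proposal. The paper handles it by appealing to a nontrivial structural fact (its Theorem~\ref{thm:fg}): inside an integral monoid, the intersection of two finitely generated submonoids is finitely generated, and the intersection of finitely generated modules over them is a finitely generated module over that intersection. Without this, inclusion--exclusion only tells you that $b_{\Gamma,x_0,i}$ is an alternating sum of counts, but not that each summand has a rational generating function. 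You should either prove this intersection result or restructure so that your pieces are already disjoint.
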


\noindent
In \cite{NSMN21}, Theorem \ref{thm:NSMN} is proved for unweighted periodic graphs, 
and the same proof also works for weighted periodic graphs.

\begin{ex}\label{ex:CS}
One can show that the growth sequence of the Wakatsuki graph (see Example \ref{ex:WG}) with the start point $v'_0$ is given by 
$s_{\Gamma, v'_0, 0}=1$ and
\[
s_{\Gamma, v'_0, n} = \begin{cases}
\frac{9}{2}n - 1 & (n \equiv 0 \mod 2) \\
\frac{9}{2}n - \frac{1}{2} & (n \equiv 1 \mod 2) 
\end{cases} 
\]
for $n \ge 1$. 
The growth sequence is exactly the same when the start point is $v'_1$. 

When the start point is $v' _2$, the growth sequence is given by 
$s_{\Gamma, v'_2, 0}=1$, $s_{\Gamma, v'_2, 1}=2$, $s_{\Gamma, v'_2, 2}=4$ and
\[
s_{\Gamma, v'_2, n} = \begin{cases}
3n & (n \equiv 0 \mod 2) \\
6n - 6 & (n \equiv 1 \mod 2) 
\end{cases} 
\]
for $n \ge 3$. 
\end{ex}

\subsection{Growth polytope (periodic graphs $\to$ polytopes)} \label{subsection:GP}
In this subsection, we define the \textit{growth polytope} $P_{\Gamma} \subset L_{\mathbb{R}}$ for a periodic graph $(\Gamma, L)$. 
The concept of a growth polytope has been defined and studied in various contexts \cites{KS02, KS06, Zhu02, Eon04, MS11, Fri13, ACIK}. 
This is helpful in understanding the asymptotic behavior of the growth sequence via convex geometry (see Theorem \ref{thm:asymp}). 

\begin{defi}\label{defi:P}
Let $(\Gamma , L)$ be an $n$-dimensional periodic graph. 
\begin{enumerate}
\item 
We define the \textit{normalization map} 
$\nu : \operatorname{Cyc}_{\Gamma /L} \to L_{\mathbb{R}} := L \otimes _{\mathbb{Z}} \mathbb{R}$ by 
\[
\nu: \operatorname{Cyc}_{\Gamma /L} \to L_{\mathbb{R}}; \quad p \mapsto \frac{\mu(\langle p \rangle )}{w(p)}. 
\]
We define the \textit{growth polytope}
\[
P_{\Gamma} := \operatorname{conv} \bigl( \operatorname{Im}(\nu) \cup \{ 0 \} \bigr) \subset L_{\mathbb{R}}
\]
as the convex hull of the set 
$\operatorname{Im}(\nu) \cup \{ 0 \} \subset L_{\mathbb{R}}$. 
Note that $\operatorname{Cyc}_{\Gamma /L}$ is a finite set. 
Furthermore, we have $\operatorname{Im}(\nu) \subset L_{\mathbb{Q}}$ since $w(p) \in \mathbb{Z}_{>0}$ and $\mu(\langle p \rangle ) \in L$ (cf.\ Definition \ref{defi:mu}). 
Therefore, $P_{\Gamma}$ is a rational polytope 
(i.e., $P_{\Gamma}$ is a polytope whose vertices are on $L_{\mathbb{Q}} := L \otimes _{\mathbb{Z}} \mathbb{Q}$). 
When $\Gamma$ is strongly connected, we have $0 \in \operatorname{int}(P_{\Gamma})$ by Lemma \ref{lem:int}. 
 
\item 
For a polytope $Q \subset L_{\mathbb{R}}$ and $y \in L_{\mathbb{R}}$, we define 
\[
d_{Q}(y) := \inf \{ t \in \mathbb{R}_{\ge 0} \mid y \in t Q \} \in \mathbb{R}_{\ge 0} \cup \{ \infty \}. 
\]
When $0 \in \operatorname{int}(Q)$, 
we have $d_{Q}(y) < \infty$ for any $y \in L_{\mathbb{R}}$. 

\item
For a periodic realization $\Phi:V_{\Gamma} \to L_{\mathbb{R}}$, we define 
\[
d_{P_{\Gamma}, \Phi}(x, y) := d_{P_{\Gamma}}\bigl( \Phi(y) - \Phi(x) \bigr)
\]
for $x, y \in V_{\Gamma}$. 
\end{enumerate}
\end{defi}

\begin{rmk}
In this paper, we assume that the weight function $w$ takes integer values (see Subsection \ref{subsection:GandW}). 
This assumption is used for $P_{\Gamma}$ to be a rational polytope. 
\end{rmk}

Next, we define the notation ``a vertex is $P$-initial" as follows. 
As far as we know, this concept was first considered by Shutov and Maleev in \cite{SM19}.

\begin{defi}
Let $(\Gamma , L)$ be a periodic graph. 
A vertex $y \in V_{\Gamma}$ is called \textit{$P$-initial} if the following condition holds: 
\begin{itemize}
\item For any vertex $u \in V(P_{\Gamma})\setminus \{ 0 \}$, 
there exists a cycle $p_u \in \operatorname{Cyc}_{\Gamma /L}$ such that 
$\nu (p_u) = u$ and $\overline{y} \in \operatorname{supp}(p_u)$. 
\end{itemize}
\end{defi}

\begin{ex}
For the Wakatsuki graph (see Example \ref{ex:WG}), $\operatorname{Im}(\nu)$ can be illustrated as in Figure \ref{fig:Im}. Here, the numbers written beside each point are the possible lengths of the cycles that give that point. 
For example, the cycle $q_1 := \overline{e'_5}\ \overline{e'_0}$ gives a 
point $\frac{\mu (\langle q_1 \rangle)}{w(q_1)} = \frac{1}{2}(1,0)$. 
The cycle $q_2 := \overline{e'_6}\ \overline{e'_3}\ \overline{e'_9}$ gives a 
point $\frac{\mu (\langle q_2 \rangle)}{w(q_2)} = \frac{1}{3}(1,1)$. 
In this case, the growth polytope $P_{\Gamma}$ is a hexagon. 

Furthermore, we can see that $v' _0$ and $v' _1$ are $P$-initial, but $v' _2$ is not. 
The six paths from $v'_0$ in Figure \ref{fig:6paths} give the six vertices of $P_{\Gamma}$, which shows that $v'_0$ is $P$-initial. 
On the other hand, there are no paths from $v'_2$ to $v'_2 + (0,1)$ of length two. 
This causes $v' _2$ not to be $P$-initial. 

\begin{figure}[h]
\begin{tabular}{cc}
\begin{minipage}{0.49\hsize}
\centering
\includegraphics[height=\zua]{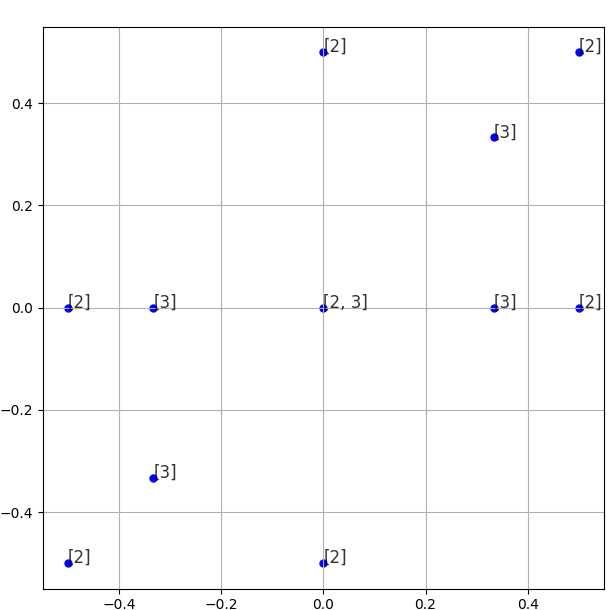}

\vspace{\zub}

\caption{$\operatorname{Im}(\nu)$.}
\label{fig:Im}
\end{minipage}&
\begin{minipage}{0.49\hsize}
\centering
\includegraphics[height=\zua]{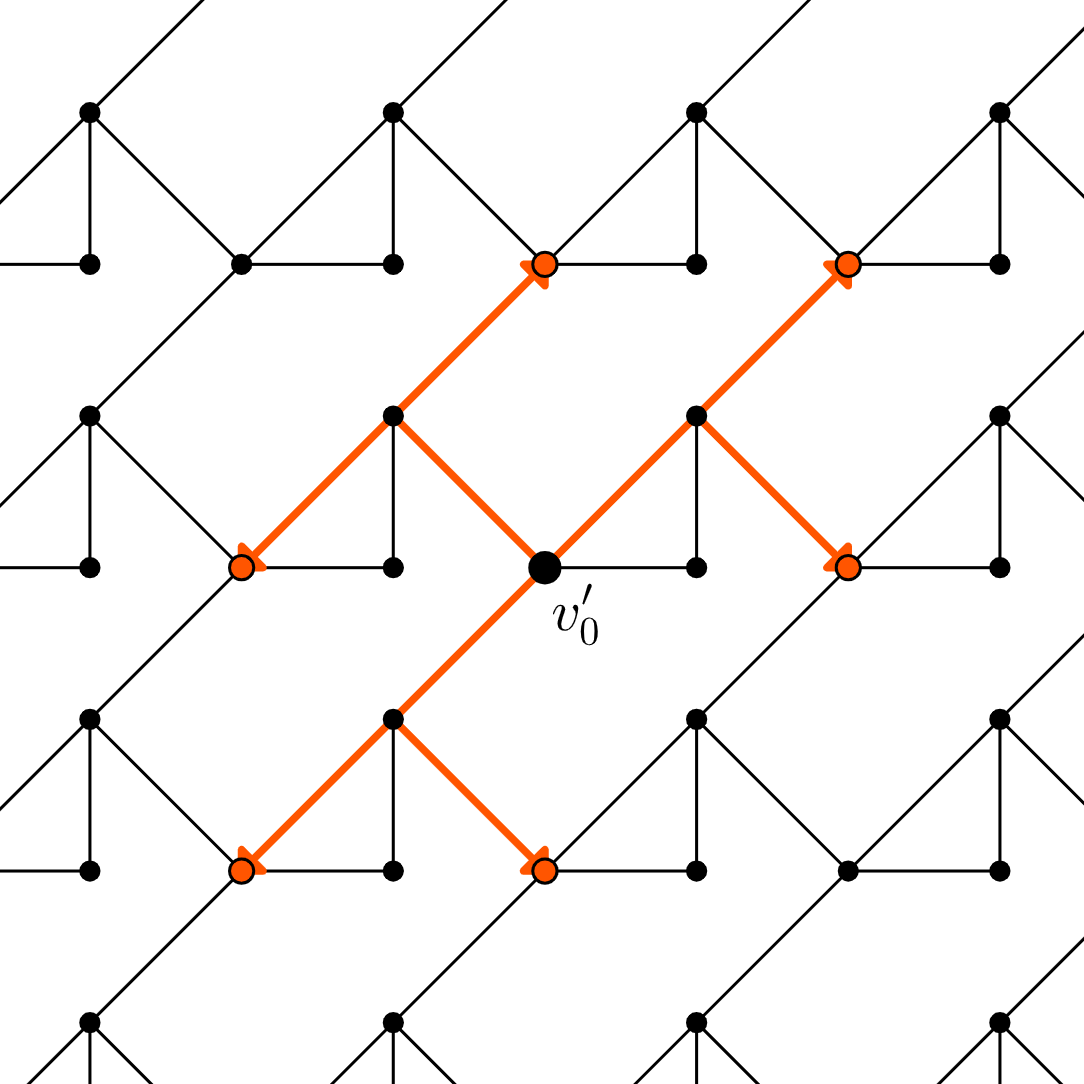}

\vspace{\zub}

\caption{Paths giving $V(P_{\Gamma})$.}
\label{fig:6paths}
\end{minipage}
\end{tabular}
\end{figure}
\end{ex}

\begin{lem}\label{lem:dPd}
Let $(\Gamma , L)$ be a strongly connected periodic graph, and let $x_0 \in V_{\Gamma}$. 
Then we have 
\[
d_{P_{\Gamma}}(y - x_0) \le d_{\Gamma}(x_0, y)
\]
for any $y \in V_{\Gamma}$ satisfying $\overline{y} = \overline{x_0}$. 
\end{lem}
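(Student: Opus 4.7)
The plan is to take a minimum-weight walk from $x_0$ to $y$, decompose its image in the quotient graph into cycles via Lemma \ref{lem:bunkai}, and then realize $(y-x_0)/d_{\Gamma}(x_0,y)$ explicitly as a convex combination of vertices from $\operatorname{Im}(\nu) \cup \{0\}$, which by definition sits inside $P_{\Gamma}$.

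More concretely, I would first dispose of the trivial case $y = x_0$: here both sides equal $0$. So assume $y \ne x_0$, in which case $d_{\Gamma}(x_0, y) \ge 1$ because edge weights are positive integers (strong connectivity of $\Gamma$ guarantees finiteness). Choose a walk $p$ from $x_0$ to $y$ realizing $w(p) = d_{\Gamma}(x_0, y)$. Since $\overline{s(p)} = \overline{x_0} = \overline{y} = \overline{t(p)}$, the image $\overline{p}$ is a closed walk in $\Gamma/L$ based at $\overline{x_0}$.

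Next, apply Lemma \ref{lem:bunkai}(1) to $\overline{p}$: by Remark \ref{rmk:bunkai}(1), because $s(\overline{p}) = t(\overline{p})$, the path part $q_0$ is trivial, so we obtain cycles $q_1, \ldots, q_\ell \in \operatorname{Cyc}_{\Gamma/L}$ with
\[
\langle \overline{p} \rangle = \sum_{i=1}^{\ell} \langle q_i \rangle.
\]
By Remark \ref{rmk:bunkai}(3), this yields $w(p) = \sum_{i=1}^\ell w(q_i)$, and by Lemma \ref{lem:mu} together with the definition of $\mu$,
\[
y - x_0 \;=\; \operatorname{vec}(p) \;=\; \mu\bigl(\langle \overline{p}\rangle\bigr) \;=\; \sum_{i=1}^{\ell} \mu\bigl(\langle q_i\rangle\bigr) \;=\; \sum_{i=1}^{\ell} w(q_i)\,\nu(q_i).
\]
Setting $t := w(p) = d_{\Gamma}(x_0,y) > 0$, we have $t = \sum_{i=1}^\ell w(q_i)$, so dividing by $t$ expresses $(y-x_0)/t$ as a convex combination of the points $\nu(q_i) \in \operatorname{Im}(\nu) \subset P_{\Gamma}$. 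Hence $(y-x_0)/t \in P_{\Gamma}$, i.e., $y - x_0 \in t\, P_{\Gamma}$, which gives $d_{P_{\Gamma}}(y-x_0) \le t = d_{\Gamma}(x_0,y)$.

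There is no real obstacle here; the only point needing care is the edge case where the closed walk $\overline{p}$ is empty (handled by $y = x_0$) versus nonempty, and the observation that the weights $w(q_i)$ are the correct coefficients to turn $\sum \mu(\langle q_i\rangle)$ into a convex combination of normalized vectors $\nu(q_i)$. Everything else is a direct unpacking of Definition \ref{defi:P} and Remark \ref{rmk:bunkai}.
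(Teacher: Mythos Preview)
Your proof is correct and follows essentially the same route as the paper: take a shortest walk $p$, decompose $\overline{p}$ into cycles via Lemma \ref{lem:bunkai} (with trivial $q_0$ by Remark \ref{rmk:bunkai}(1)), and use $\nu(q_i)\in P_\Gamma$ to place $y-x_0$ in $w(p)\cdot P_\Gamma$. The only cosmetic difference is that you spell out the convex-combination viewpoint and the edge case $y=x_0$, whereas the paper writes the containment $\sum_i \mu(\langle q_i\rangle)\in \bigl(\sum_i w(q_i)\bigr)P_\Gamma$ directly.
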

\begin{proof}
Let $p$ be a walk in $\Gamma$ from $x_0$ to $y$ satisfying $w(p) = d_{\Gamma}(x_0, y)$. 
By Lemma \ref{lem:bunkai}, $\overline{p}$ decomposes to a walkable sequence $(q_0, q_1, \ldots , q_{\ell})$ with $q_0 = \emptyset _{\overline{x_0}}$ such that 
$\langle \overline{p} \rangle = \sum _{i = 1} ^{\ell} \langle q_i \rangle$. 
Then, we have 
\[
y-x_0 
= \sum _{i=1} ^{\ell} \mu(\langle q_i \rangle) 
\in \sum _{i = 1} ^{\ell} w(q_i) \cdot P_{\Gamma} 
= w(p) \cdot P_{\Gamma}, 
\]
which proves the desired inequality. 
\end{proof}

We define $C_{1}(\Gamma, \Phi, x_0)$ and $C_{2}(\Gamma, \Phi, x_0)$ as invariants that measure the difference between $d_{\Gamma}$ and $d_{P_{\Gamma}, \Phi}$. 

\begin{defi}
Let $(\Gamma , L)$ be a strongly connected periodic graph. 
Let $\Phi : V_{\Gamma} \to L_{\mathbb{R}}$ be a periodic realization, and let $x_0 \in V_{\Gamma}$. 
Then, we define 
\begin{align*}
C_{1}(\Gamma, \Phi, x_0) &:= \sup _{y \in V_{\Gamma}} \bigl( d_{P_{\Gamma}, \Phi} (x_0, y) - d_{\Gamma} (x_0, y) \bigr), \\
C_{2}(\Gamma, \Phi, x_0) &:= \sup _{y \in V_{\Gamma}} \bigl( d_{\Gamma}(x_0, y) - d_{P_{\Gamma}, \Phi} ( x_0, y ) \bigr). 
\end{align*}
By Theorem \ref{thm:asymp}, we have $C_{1}(\Gamma, \Phi, x_0) < \infty$ and $C_{2}(\Gamma, \Phi, x_0) < \infty$. 
\end{defi}

\begin{rmk}\label{rmk:C1}
\begin{enumerate}
\item
By the proof of Theorem \ref{thm:asymp}, we have 
\[
C_{1}(\Gamma, \Phi, x_0) = 
\max _{y \in B'_{c-1}} \bigl( d_{P_{\Gamma}, \Phi} (x_0, y) - d_{\Gamma} (x_0, y) \bigr), 
\]
where $c := \# (V_{\Gamma} /L)$ and 
\[
B'_{c-1} := \{ y \in V_{\Gamma} \mid \text{there exists a walk $p$ from $x_0$ to $y$ with $\operatorname{length}(p) \le c-1$}\}. 
\]

\item
It is not so easy to calculate $C_2(\Gamma, \Phi, x_0)$ in general. 
In Proposition \ref{prop:C2}, we will discuss a way of the calculation of $C_2(\Gamma, \Phi, x_0)$ when $x_0$ is $P$-initial. 
\end{enumerate}
\end{rmk}

\begin{lem}\label{lem:C1C2}
Let $(\Gamma , L)$ be a strongly connected periodic graph. 
Let $\Phi : V_{\Gamma} \to L_{\mathbb{R}}$ be a periodic realization, and let $x_0 \in V_{\Gamma}$. 
\begin{enumerate}
\item We have $C_{1}(\Gamma, \Phi, x_0) \ge 0$ and $C_{2}(\Gamma, \Phi, x_0) \ge 0$. 
\item Suppose $\# (V_{\Gamma} /L) = 1$. 
Then, any $y \in V_{\Gamma}$ is $P$-initial. Furthermore, we have $C_{1}(\Gamma, \Phi, x_0) = 0$. 
\item If $C_{2}(\Gamma, \Phi, x_0) < 1$, then $x_0$ is $P$-initial. 
\end{enumerate}
\end{lem}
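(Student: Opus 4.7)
The plan is to dispatch parts (1) and (2) directly and to focus the main effort on (3), whose heart is an extremality argument at a vertex of $P_\Gamma$ combined with the walkability criterion of Lemma \ref{lem:bunkai}(2). For (1), plugging $y = x_0$ into the suprema defining $C_1$ and $C_2$ yields $0$ in both cases, so both are $\ge 0$. For (2), the hypothesis $\#(V_\Gamma/L) = 1$ gives $V_{\Gamma/L} = \{\overline{x_0}\}$, so every cycle $p_u \in \operatorname{Cyc}_{\Gamma/L}$ automatically has $\operatorname{supp}(p_u) = \{\overline{x_0}\} \ni \overline{y}$; since the vertices of $P_\Gamma = \operatorname{conv}(\operatorname{Im}(\nu) \cup \{0\})$ lie in $\operatorname{Im}(\nu) \cup \{0\}$, any nonzero vertex $u$ lies in $\operatorname{Im}(\nu)$, which yields such a cycle $p_u$, so every $y$ is $P$-initial. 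Moreover $\overline{y} = \overline{x_0}$ for every $y \in V_\Gamma$, so Lemma \ref{lem:dPd} gives $d_{P_\Gamma, \Phi}(x_0, y) \le d_\Gamma(x_0, y)$, hence $C_1 \le 0$; combined with (1), $C_1 = 0$.

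For (3), I would fix a vertex $u \in V(P_\Gamma) \setminus \{0\}$ and aim to extract a cycle $p_u$ through $\overline{x_0}$ with $\nu(p_u) = u$. Since $u \ne 0$ lies in $\operatorname{Im}(\nu)$, I would write $u = \nu(p_0)$ for some cycle $p_0$, set $m := w(p_0) \in \mathbb{Z}_{>0}$, so that $m u = \mu(\langle p_0 \rangle) \in L$, and choose the target $y := x_0 + m u \in V_\Gamma$, which satisfies $\overline{y} = \overline{x_0}$. Since strong connectedness forces $0 \in \operatorname{int}(P_\Gamma)$ and the vertex $u$ lies on $\partial P_\Gamma$, one has $d_{P_\Gamma}(u) = 1$, and therefore $d_{P_\Gamma, \Phi}(x_0, y) = d_{P_\Gamma}(m u) = m$. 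Combining Lemma \ref{lem:dPd} with the hypothesis $C_2(\Gamma, \Phi, x_0) < 1$ then sandwiches
\[
m \;\le\; d_\Gamma(x_0, y) \;\le\; d_{P_\Gamma, \Phi}(x_0, y) + C_2(\Gamma, \Phi, x_0) \;<\; m + 1,
\]
and integrality of $d_\Gamma(x_0, y)$ forces $d_\Gamma(x_0, y) = m$ exactly.

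Finally, I would choose a walk $p$ from $x_0$ to $y$ realizing $w(p) = m$ and apply Lemma \ref{lem:bunkai}(1) together with Remark \ref{rmk:bunkai}(1) to decompose $\overline{p}$ into a walkable sequence $(\emptyset_{\overline{x_0}}, q_1, \ldots, q_\ell)$ of cycles satisfying $\sum_i w(q_i) = m$ and $\sum_i \mu(\langle q_i \rangle) = m u$. Rewriting
\[
u \;=\; \sum_{i=1}^{\ell} \frac{w(q_i)}{m}\, \nu(q_i)
\]
expresses $u$ as a convex combination of points of $P_\Gamma$ with strictly positive coefficients, so extremality of the vertex $u$ forces $\nu(q_i) = u$ for every $i$. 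The walkability condition of Lemma \ref{lem:bunkai}(2) then requires some $q_{\sigma(1)}$ to meet $\operatorname{supp}(\emptyset_{\overline{x_0}}) = \{\overline{x_0}\}$, and $p_u := q_{\sigma(1)}$ is the desired cycle. The main obstacle will be the squeezing step: the bound $C_2 < 1$ must pin $d_\Gamma(x_0, y)$ onto $m$ exactly, so that extremality of $u$ can govern every cycle in the decomposition; once that is secured, walkability produces a cycle through $\overline{x_0}$ for free.
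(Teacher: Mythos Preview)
Your proof is correct and follows essentially the same route as the paper's: for (1) and (2) you use the same substitution $y=x_0$ and Lemma \ref{lem:dPd}, and for (3) you pick a lattice multiple of the vertex $u$, squeeze $d_\Gamma(x_0,y)$ to equal $d_{P_\Gamma}(mu)$ via Lemma \ref{lem:dPd} and $C_2<1$, decompose the resulting walk into cycles, use extremality of $u$ to force $\nu(q_i)=u$ for all $i$, and invoke Lemma \ref{lem:bunkai}(2) to find one through $\overline{x_0}$. The only cosmetic difference is that the paper takes an arbitrary $d$ with $du\in L$ whereas you take $m=w(p_0)$ for a witnessing cycle $p_0$; both choices work identically.
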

\begin{proof}
(1) can be easily shown by setting $y = x_0$ in the definition of $C_{1}(\Gamma, \Phi, x_0)$ and $C_{2}(\Gamma, \Phi, x_0)$. 
The first assertion of (2) immediately follows from the definition of being $P$-initial. 
The second assertion of (2) follows from Lemma \ref{lem:dPd}. 

We prove (3). 
Suppose $C_{2}(\Gamma, \Phi, x_0) < 1$. 
Let $u \in V(P_{\Gamma})$. 
We take $d \in \mathbb{Z}_{>0}$ such that $du \in L$. 
Then, by Lemma \ref{lem:dPd}, we have 
\[
d_{\Gamma}(x_0, du + x_0) \ge d_{P_{\Gamma}}(du) = d \in \mathbb{Z}_{>0}. 
\]
Since $C_{2}(\Gamma, \Phi, x_0) < 1$, we have $d_{\Gamma}(x_0, du + x_0) = d$. 
Here, we have used the fact that the weight function $w _{\Gamma}$ is defined to be integral. 
Therefore, there exists a walk $p$ in $\Gamma$ from $x_0$ to $du + x_0$ such that $w(p) = d$. 
By Lemma \ref{lem:bunkai}(1), $\overline{p}$ decomposes to a walkable sequence $(q_0, q_1, \ldots , q_{\ell})$ with $q_0 = \emptyset _{\overline{x_0}}$ such that 
$\langle \overline{p} \rangle = \sum _{i = 1} ^{\ell} \langle q_i \rangle$. 
Then, we have 
\[
\frac{\sum _{i=1}^{\ell} \mu(\langle q_i \rangle)}{\sum _{i=1}^{\ell} w(q_i)} 
= \frac{\mu (\langle \overline{p} \rangle)}{d} = u \in V(P_{\Gamma}). 
\]
Here, $\frac{\sum _{i=1}^{\ell} \mu(\langle q_i \rangle)}{\sum _{i=1}^{\ell} w(q_i)}$ is a convex combination of 
$\frac{\mu(\langle q_1 \rangle)}{w(q_1)}, \ldots, \frac{\mu(\langle q_{\ell} \rangle)}{w(q_{\ell})} \in P_{\Gamma}$. 
Since $\frac{\sum _{i=1}^{\ell} \mu(\langle q_i \rangle)}{\sum _{i=1}^{\ell} w(q_i)} = u$ is a vertex of $P_{\Gamma}$, 
we conclude that $\frac{\mu (\langle q_i \rangle)}{w(q_i)} = u$ for any $1 \le i \le \ell$. 
By Lemma \ref{lem:bunkai}(2), $\overline{x_0} \in \operatorname{supp}(q_i)$ holds for some $1 \le i \le \ell$. 
Therefore, we can conclude that $x_0$ is $P$-initial. 
\end{proof}

\section{Ehrhart theory on periodic graphs}\label{section:ETP}
In Subsection \ref{subsection:EG}, we treat a class of periodic graphs for which Ehrhart theory can be applied. 
More precisely, in Theorem \ref{thm:ET}(1)(2), we see that the cumulative growth sequence $(b_{\Gamma, x_0, i})_i$ is a quasi-polynomial on $i \ge 0$ if a periodic realization $\Phi$ satisfies 
$C_1(\Gamma, \Phi, x_0) + C_2(\Gamma, \Phi, x_0) < 1$. 
Furthermore, in Theorem \ref{thm:ET}(3), we see that the growth series has the same reciprocity law as the Ehrhart series of reflexive polytopes if a periodic realization $\Phi$ satisfies both $C_1(\Gamma, \Phi, x_0)< \frac{1}{2}$ and $C_2(\Gamma, \Phi, x_0) < \frac{1}{2}$. 
Theorem \ref{thm:ET} can be seen as a generalization of a result of Conway and Sloane \cite{CS97}, where they treat the contact graphs of lattices (see Remark \ref{rmk:CS97}). 
In the proof of Theorem \ref{thm:ET}, we essentially use a variant of Ehrhart theory that is proved in Appendix \ref{section:ET}. 

In Subsection \ref{subsection:PtoPG}, we construct periodic graphs $(\Gamma _Q, L)$ from rational polytopes $Q$. By this construction, Theorem \ref{thm:ET} can be seen as a generalization of the Ehrhart theory for polytopes $Q$ with $0 \in Q$. 

\subsection{Ehrhart graphs}\label{subsection:EG}
\begin{defi}
Let $(\Gamma , L)$ be a strongly connected periodic graph, 
and let $\Phi : V_{\Gamma} \to L_{\mathbb{R}}$ be a periodic realization. 
Let $x_0 \in V_{\Gamma}$ and let $\alpha \in \mathbb{R}$. 
The triple $(\Gamma, \Phi, x_0)$ is called to be \textit{$\alpha$-Ehrhart} if we have
\[
B_{\Gamma, x_0, i} = 
\{ y \in V_{\Gamma} \mid d_{P_{\Gamma}, \Phi} (x_0, y) \le i + \alpha \}
\]
for all $i \in \mathbb{Z}_{\ge 0}$. 

This condition is equivalent to the condition that 
\[
d_{P_{\Gamma}, \Phi} (x_0, y) - \alpha \le d_{\Gamma} (x_0, y)
< d_{P_{\Gamma}, \Phi} (x_0, y) + 1 - \alpha
\]
holds for all $y \in V_{\Gamma}$. 
\end{defi}

\begin{defi}
Let $(\Gamma , L)$ be a strongly connected $n$-dimensional periodic graph. 
Let $\Phi : V_{\Gamma} \to L_{\mathbb{R}}$ be a periodic realization, and let $x_0 \in V_{\Gamma}$. 
We say that \textit{$\Phi$ is symmetric with respect to $x_0$} if 
$\#(\Phi ^{-1}(y)) = \#(\Phi ^{-1}(y'))$ holds for all $y, y' \in L_{\mathbb{R}}$ satisfying $y' + y = 2 \Phi(x_0)$. 
\end{defi}

\begin{rmk}
When $\# (V_{\Gamma} /L) = 1$, there exists an essentially unique periodic realization $\Phi$ (unique up to translation), 
and this $\Phi$ is symmetric with respect to any vertex $x_0 \in V_{\Gamma}$. 
\end{rmk}

\begin{thm}\label{thm:ET}
Let $(\Gamma , L)$ be a strongly connected $n$-dimensional periodic graph. 
Let $\Phi : V_{\Gamma} \to L_{\mathbb{R}}$ be a periodic realization, and let $x_0 \in V_{\Gamma}$. 
Let $s_i := s_{\Gamma, x_0, i}$ and $b_i := b_{\Gamma, x_0, i}$ be 
the growth sequence and the cumulative growth sequence with the start point $x_0$. 
Let $G_s(t) := \sum _{i \ge 0} s_i t^i$ and $G_b(t) := \sum _{i \ge 0} b_i t^i$ be their generating functions. 
Set $C_1 := C_{1}(\Gamma, \Phi, x_0)$ and $C_2 := C_{2}(\Gamma, \Phi, x_0)$. 
\begin{enumerate}
\item 
Suppose that the triple $(\Gamma, \Phi, x_0)$ is $\alpha$-Ehrhart for some $\alpha \in [0,1)$. 
Then, the function $i \mapsto b_i$ is a quasi-polynomial on $i \ge 0$. 

\item 
Suppose $C_1 + C_2 < 1$. 
Then, $(\Gamma, \Phi, x_0)$ is $\alpha$-Ehrhart for any $\alpha \in [C_{1}, 1-C_{2})$. 

\item 
Suppose both $C_1 < \frac{1}{2}$ and $C_2 < \frac{1}{2}$. 
Suppose one of the following conditions holds: 
\begin{itemize}
\item[(i)] $\Gamma$ is undirected, or

\item[(ii)] $\Phi$ is symmetric with respect to $x_0$. 
\end{itemize}
Then, we have 
\[
G_b(1/t) = (-1)^{n+1} t G_b(t), \qquad 
G_s(1/t) = (-1)^n G_s(t). 
\]
In particular, we have 
\[
f_b(-i) = (-1)^n f_b(i-1)
\]
for any $i \in \mathbb{Z}$, and 
\[
f_s(-i) = (-1)^{n+1} f_s(i)
\]
for any $i \in \mathbb{Z} \setminus \{ 0 \}$, 
where $f_b$ and $f_s$ are the quasi-polynomials corresponding to the sequences $(b_i)_i$ and $(s_i)_i$. 
\end{enumerate}
\end{thm}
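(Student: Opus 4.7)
The plan is to prove parts (2), (1), and (3) in that order. Part (2) will be a direct unpacking of the definitions of $C_1$ and $C_2$; Part (1) will follow by reducing the count $b_i$ to a sum of Ehrhart-type lattice-point counts for translated, shifted dilations of the growth polytope and invoking the variant Ehrhart theorem (Theorem \ref{thm:Eh}); Part (3) will combine Part (2) at $\alpha = 1/2$ with the reciprocity half of Theorem \ref{thm:Eh} and the symmetry hypothesis.

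For Part (2), fix $\alpha \in [C_1, 1-C_2)$ and any $y \in V_{\Gamma}$. The definitions of $C_1$ and $C_2$ yield
\[
d_{P_{\Gamma},\Phi}(x_0,y) - \alpha \;\le\; d_{P_{\Gamma},\Phi}(x_0,y) - C_1 \;\le\; d_{\Gamma}(x_0,y) \;\le\; d_{P_{\Gamma},\Phi}(x_0,y) + C_2 \;<\; d_{P_{\Gamma},\Phi}(x_0,y) + 1 - \alpha,
\]
which is exactly the characterization of $\alpha$-Ehrhart from the definition. For Part (1), I choose representatives $x_1, \ldots, x_c$ of the $L$-orbits on $V_{\Gamma}$ and set $v_j := \Phi(x_j) - \Phi(x_0)$. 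Since $L$ acts freely, every $y \in V_{\Gamma}$ has a unique expression $y = u + x_j$ with $u \in L$, so $\Phi(y) - \Phi(x_0) = u + v_j$; the $\alpha$-Ehrhart hypothesis then yields
\[
b_i \;=\; \sum_{j=1}^{c} \#\bigl( L \cap ((i+\alpha) P_{\Gamma} - v_j) \bigr) \qquad (i \ge 0).
\]
Theorem \ref{thm:Eh} is designed precisely to show that each summand is a quasi-polynomial in $i \ge 0$ for any $\alpha \in [0,1)$ and any translation vector, and quasi-polynomiality is preserved by the finite sum over $j$.

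For Part (3), I take $\alpha = 1/2$ in Part (2), which is permitted because $C_1 \le 1/2$ and $1/2 < 1 - C_2$, so that the above representation of $b_i$ holds with the half-integer shift. Invoking the reciprocity half of Theorem \ref{thm:Eh} gives a formula for the quasi-polynomial continuation $f_b(-i-1)$ as $(-1)^n$ times a ``reflected'' count, and the symmetry hypothesis is what allows this reflected count to be identified with $b_i$. In case (i), the undirectedness of $\Gamma$ forces $P_{\Gamma}$ to be centrally symmetric (each cycle has a reverse cycle of the same weight but negated $\mu$-image), so the substitution $u \mapsto -u$ makes each summand individually invariant. In case (ii), $P_{\Gamma}$ need not be centrally symmetric, but the symmetry of $\Phi$ with respect to $x_0$ supplies a multiplicity-preserving pairing $j \leftrightarrow j'$ on orbit indices satisfying $v_{j'} \equiv -v_j \pmod{L}$, which reassembles the reflected sum into the original. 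Either way, we obtain $G_b(1/t) = (-1)^{n+1} t\, G_b(t)$; the remaining identities $G_s(1/t) = (-1)^n G_s(t)$, $f_b(-i) = (-1)^n f_b(i-1)$, and $f_s(-i) = (-1)^{n+1} f_s(i)$ follow formally from the relation $G_s(t) = (1-t) G_b(t)$.

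The main obstacle is Part (3), specifically reconciling the two symmetry cases. Case (ii) is the more delicate one, since the hypothesis on $\Phi$ gives only a multiplicity equality $\#\Phi^{-1}(y) = \#\Phi^{-1}(2\Phi(x_0)-y)$, from which a concrete pairing of orbits (together with the congruence $v_{j'} \equiv -v_j \pmod{L}$) must be extracted before the reflected count can be identified with $b_i$. Additional care is needed with the precise form of the half-integer Ehrhart reciprocity in Theorem \ref{thm:Eh}, since interior-vs-closed boundary lattice points of $(i+1/2) P_{\Gamma}$ have to be accounted for.
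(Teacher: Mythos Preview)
Your treatment of parts (1) and (2) is correct and is exactly what the paper does: unpack the definitions for (2), and for (1) pick orbit representatives and write $b_i$ as a finite sum of the shifted Ehrhart counts $h_{P_\Gamma,\,\Phi(x_0)-\Phi(x_j),\,\alpha}(i)$, then apply Theorem~\ref{thm:Eh}(1).

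For part (3), your outline has the right shape, and the obstacle you flag at the end is real --- but you have not said how to get past it, and this is where your argument is incomplete. After applying the reciprocity of Theorem~\ref{thm:Eh}(3) and the symmetry step, the reflected series you obtain is $\sum_j \overset{\circ}{H}_{P_\Gamma,\,\Phi(x_0)-\Phi(x_j),\,1/2}(t)$, i.e.\ the generating function of the \emph{interior} counts $\#\bigl(\Phi^{-1}(\Phi(x_0)+(i+\tfrac12)\operatorname{int}(P_\Gamma))\bigr)$. To conclude $G_b(1/t)=(-1)^{n+1}tG_b(t)$ you must know that this interior count equals $b_i$, not just the closed count. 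The paper's device for this is the one piece you are missing: use that $C_1<\tfrac12$ \emph{strictly}. Then by part~(2) the triple is $\alpha$-Ehrhart not only for $\alpha=\tfrac12$ but also for $\alpha=\tfrac12-\epsilon$ for all small $\epsilon>0$; the first gives $B_{\Gamma,x_0,i}=\Phi^{-1}\bigl(\Phi(x_0)+(i+\tfrac12)P_\Gamma\bigr)$ and the second, letting $\epsilon\to 0$, gives $B_{\Gamma,x_0,i}=\Phi^{-1}\bigl(\Phi(x_0)+(i+\tfrac12)\operatorname{int}(P_\Gamma)\bigr)$. With this in hand the interior-vs-closed issue disappears and the rest of your sketch goes through.

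On the symmetry step itself: your case split is valid. In case~(ii) the hypothesis does yield the pairing you want, since $\#\Phi^{-1}(\Phi(x_0)+w)=\#\{j:v_j\equiv w\pmod L\}$, so the multiset $\{v_j\bmod L\}$ is invariant under negation. The paper instead handles (i) and (ii) in one stroke via the single identity
\[
\#\bigl(\Phi^{-1}(\Phi(x_0)+a\cdot\operatorname{int}(P_\Gamma))\bigr)
=\#\bigl(\Phi^{-1}(\Phi(x_0)+a\cdot\operatorname{int}(-P_\Gamma))\bigr),
\]
which in case~(i) holds because $P_\Gamma=-P_\Gamma$ and in case~(ii) holds directly from the symmetry of $\Phi$. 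This is slightly cleaner than extracting an explicit bijection of orbits, but your approach and the paper's are equivalent here.
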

\begin{proof}
As in Appendix \ref{section:ET}, 
for a rational polytope $P \subset L_{\mathbb{R}}$, $v \in L_{\mathbb{R}}$, and $\beta \in \mathbb{R}$, 
we define a function $h_{P, v, \beta}: \mathbb{Z} \to \mathbb{Z}$ by
\[
h_{P,v,\beta}(i) := \# \bigl( (v+(i+\beta)P) \cap L \bigr), 
\]
and its generating function $H_{P,v,\beta}(t) = \sum _{i \in \mathbb{Z}} h_{P,v,\beta}(i)t^i$. 
We also define $\overset{\circ}{h}_{P,v,\beta}$ and $\overset{\circ}{H}_{P,v,\beta}$ similarly. 
Note that we have $\operatorname{relint}(P_{\Gamma}) = \operatorname{int}(P_{\Gamma})$ by the assumption that $\Gamma$ is strongly connected (see Lemma \ref{lem:int}).

Let $c := \#(V_{\Gamma}/L)$. Take $y_1, \ldots, y_c \in V_{\Gamma}$ such that 
$\{ \overline{y}_1, \ldots , \overline{y}_c \} = V_{\Gamma} /L$. 
Then, we have $V_{\Gamma} = \bigsqcup _{j = 1} ^c (y_j + L)$, and hence, 
\begin{align*}
B_{\Gamma, x_0,i}
&= \left \{ y \in V_{\Gamma} \ \middle | \  d_{P_{\Gamma}, \Phi} (x_0, y) \le i + \alpha \right \} \\
&= \Phi^{-1}\bigl( \Phi (x_0) + (i + \alpha)P_{\Gamma} \bigr) \\
&= \bigsqcup _{j = 1} ^c \left( \Phi^{-1}\bigl( \Phi (x_0) + (i + \alpha)P_{\Gamma} \bigr) \cap (y_j + L) \right) \\
&= \bigsqcup _{j = 1} ^c \bigl( y_j + \bigl \{ m \in L \ \big | \ \Phi(y_j) + m \in \Phi(x_0) +  (i + \alpha)P_{\Gamma} \bigr \} \bigr) \\
&= \bigsqcup _{j = 1} ^c \bigl( y_j + \bigl( \Phi (x_0) - \Phi(y_j) + (i + \alpha) P_{\Gamma} \bigr) \cap L \bigr). 
\end{align*}
Hence, we have 
\[
b_i = \sum _{j=1}^c \# \bigl( \bigl( \Phi (x_0) - \Phi(y_j) + (i + \alpha) P_{\Gamma} \bigr) \cap L \bigr) 
= \sum _{j = 1} ^c h_{P_{\Gamma}, \Phi (x_0) - \Phi(y_j), \alpha}(i). 
\]
Therefore, (1) follows from Theorem \ref{thm:Eh}(1). 

(2) follows from the definitions of $C_1$ and $C_2$. 

We prove (3). 
When the condition (ii) is satisfied, 
we have
\[
\# \bigl( \Phi^{-1}(\Phi(x_0) + a \cdot \operatorname{int}(P_{\Gamma})) \bigr) 
= \# \bigl( \Phi^{-1}(\Phi(x_0) + a \cdot \operatorname{int}(- P_{\Gamma})) \bigr)
\tag{$\heartsuit$}
\]
for any $a \in \mathbb{R}_{\ge 0}$. 
When the condition (i) is satisfied, we have $- P_{\Gamma} = P_{\Gamma}$, and the same assertion $(\heartsuit)$ holds. 

By (2), $(\Gamma, \Phi, x_0)$ is $\frac{1}{2}$-Ehrhart. 
Furthermore, $(\Gamma, \Phi, x_0)$ is $\alpha$-Ehrhart for $\alpha = \frac{1}{2} - \epsilon$ for sufficiently small $\epsilon > 0$. 
Therefore, we have both
\begin{align*}
B_{\Gamma, x_0, i} 
&= \Phi^{-1} \left( \Phi(x_0) + \left( i + \frac{1}{2} \right) P_{\Gamma} \right), \\
B_{\Gamma, x_0, i}
&= \Phi^{-1} \left( \Phi(x_0) + \left( i + \frac{1}{2} \right) \operatorname{int}(P_{\Gamma}) \right) 
\end{align*}
for any $i \in \mathbb{Z}$. 
Therefore, we have 
\begin{align*}
G_b(t^{-1}) 
&= \sum _{j = 1} ^c H_{P_{\Gamma}, \Phi(x_0) - \Phi(y_j), \frac{1}{2}}(t^{-1}) \\
&= (-1)^{n+1} \sum _{j = 1} ^c \overset{\circ}{H}_{P_{\Gamma}, - (\Phi(x_0) - \Phi(y_j)), - \frac{1}{2}}(t)\\
&= (-1)^{n+1} \sum _{j = 1} ^c \overset{\circ}{H}_{-P_{\Gamma}, \Phi(x_0) - \Phi(y_j), - \frac{1}{2}}(t)\\
&= (-1)^{n+1} t \sum _{j = 1} ^c \overset{\circ}{H}_{-P_{\Gamma}, \Phi(x_0) - \Phi(y_j), \frac{1}{2}}(t)\\
&= (-1)^{n+1} t \sum _{j = 1} ^c \overset{\circ}{H}_{P_{\Gamma}, \Phi(x_0) - \Phi(y_j), \frac{1}{2}}(t)\\
&= (-1)^{n+1} t G_b(t). 
\end{align*}
Here, the second equality follows from Theorem \ref{thm:Eh}(3), the third follows from Lemma \ref{lem:sym}, 
and the fifth follows from $(\heartsuit)$. 
Since $G_s(t) = (1-t)G_b(t)$, we have 
\begin{align*}
G_s(t^{-1}) 
&= (1-t^{-1})G_b(t^{-1}) = (-1)^{n+1}(1-t^{-1})tG_b(t) \\
&= (-1)^{n+1}(1-t^{-1})t(1-t)^{-1}G_s(t) = (-1)^nG_s(t). 
\end{align*}
Since $f_b$ is a quasi-polynomial, we have 
\[
\sum_{i \in \mathbb{Z}_{< 0}} f_b(i) t^i 
= - \sum_{i \in \mathbb{Z}_{\ge 0}} f_b(i) t^i
\]
as rational functions (cf.\ \cite{BR}*{Exercise 4.7}). 
Therefore, we have 
\begin{align*}
\sum_{i \in \mathbb{Z}_{> 0}} f_b(-i) t^{-i}
&= - \sum_{i \in \mathbb{Z}_{\ge 0}} f_b(i) t^i \\
&= - G_b(t) \\
&= (-1)^{n}t^{-1}G_b(t^{-1})\\
&= (-1)^{n}t^{-1}\sum_{i \in \mathbb{Z}_{\ge 0}} f_b(i) t^{-i}\\
&= (-1)^{n}\sum_{i \in \mathbb{Z}_{> 0}} f_b(i-1) t^{-i}
\end{align*}
Here, the second equality follows because the function $i \mapsto b_i$ is a quasi-polynomial on $i \ge 0$. 
By comparing the coefficients, we can conclude that $f_b(-i) = (-1)^n f_b(i-1)$ for any $i \in \mathbb{Z}_{> 0}$. 
The statement for $f_s(-i)$ follows from the same argument. 
\end{proof}

\begin{rmk}\label{rmk:ref1}
The reciprocity laws appearing in Theorem \ref{thm:ET}(3) are the same as the reciprocity laws of the Ehrhart series of reflexive polytopes 
(cf.\ \cite{BR}*{Section 4.4}). 
We will see in Remark \ref{rmk:ref2} that Theorem \ref{thm:ET}(3) 
can be seen as a generalization of the reciprocity laws of the Ehrhart series of reflexive polytopes. 
\end{rmk}

\begin{rmk}\label{rmk:CS97}
In this remark, we explain that
Theorem \ref{thm:ET} is a generalization of results of Conway and Sloane in \cite{CS97}, 
where only the case $\# (V_{\Gamma} /L) = 1$ is treated. 

In \cite{CS97}, Conway and Sloane study the growth sequence of the contact graph $\Gamma$ of 
an $n$-dimensional lattice $L$ in $\mathbb{R}^n$ that is spanned by its minimal vectors. 
More precisely, they considered the graphs obtained in the following way: 
\begin{itemize}
\item 
$L \subset \mathbb{R}^n$ is a lattice of rank $n$. 
Let $F$ be the set of all $v \in L \setminus \{ 0 \}$ such that its Euclidean norm $||v||$ is the smallest among $L \setminus \{ 0 \}$. 
Suppose that $L$ is spanned by $F$. 

\item 
Define a periodic graph $(\Gamma, L)$ by 
\begin{itemize}
\item $V_{\Gamma} := L$, $E_{\Gamma} := L \times F$, and 
\item for $e = (x,v) \in E_{\Gamma}$, we set 
\[
s(e) := x, \qquad t(e) := x+v, \qquad w(e) = 1. 
\]
\end{itemize}
\end{itemize}
Note that $\# (V_{\Gamma} /L) = 1$ in this case. 

Conway and Sloane define the ``contact polytope" $\mathcal{P}$ of $L$ as the convex hull of $F$, 
and they study the growth sequence of $\Gamma$ using Ehrhart theory on $\mathcal{P}$. 
Note that $\mathcal{P}$ coincides with the growth polytope $P_{\Gamma}$ in our notation. 
Since $\# (V_{\Gamma} /L) = 1$, we have $C_1 = 0$. 

Conway and Sloane also introduce the notations 
``well-placed", ``well-rounded" and ``well-coordinated" according to the property of $L$. 
The condition ``well-placed" coincides with the condition 
that the $\mathcal{P}$ is a reflexive polytope. 
The condition ``well-rounded" coincides with the condition ``$C_2 < 1$". 
$L$ is called ``well-coordinated" if $L$ is well-placed and well-rounded. 
They prove the following assertions (\cite{CS97}*{Theorems 2.5, 2.9}):  
\begin{itemize}
\item If $L$ is well-rounded, 
the growth sequence $(b_i)_i$ of $\Gamma$ is a polynomial on $i \ge 0$. 

\item If $L$ is well-coordinated, 
the growth sequence satisfies the reciprocity laws in Theorem \ref{thm:ET}(3). 
\end{itemize}
Therefore, Theorem \ref{thm:ET} can be seen as the generalization of these results to the case where $\# (V_{\Gamma} /L) > 1$. 
\end{rmk}

\subsection{Polytopes $\to$ periodic graphs}\label{subsection:PtoPG}
In this subsection, we define a periodic graph $\Gamma _Q$ from a rational polytope $Q$, and 
we see that the study of the growth sequences of periodic graphs can be essentially seen as 
a generalization of the Ehrhart theory of rational polytopes $Q$ satisfying $0 \in Q$. 

First, we define a periodic graph $\Gamma _Q$ for a rational polytope $Q$ (possibly $0 \not \in Q$). 
\begin{defi}
Let $Q \subset \mathbb{R}^N$ be a $d$-dimensional rational polytope. 
Let $a$ be the minimum positive integer such that $aQ$ is a lattice polytope. 
We define a graph $\Gamma _Q$ as follows: 
\begin{itemize}
\item 
$V_{\Gamma _Q} := \mathbb{Z}^N$. 

\item 
$E_{\Gamma _Q} := \mathbb{Z}^N \times F_Q$, 
where $F_Q := \left \{ (i, m) \in \mathbb{Z}_{>0} \times \mathbb{Z}^N \ \middle | \  i < a(d+1),\ m \in iQ  \right\}$. 

\item 
For $e = (x, (i,m)) \in E_{\Gamma _Q}$, we define 
\[
s(e) := x, \qquad t(e) := x+m, \qquad w(e) := i. 
\]
\end{itemize}
Then, $\left( \Gamma _Q, L \right)$ for $L := \mathbb{Z}^N$ becomes an $N$-dimensional periodic graph. 
Since $\# \left( V_{\Gamma _Q}/L \right) = 1$, there exists the unique realization $\Phi : V_{\Gamma _Q} \to L_{\mathbb{R}}$ such that $\Phi(0)=0$. We set $C_i := C_i(\Gamma _Q, \Phi, 0)$ for $i \in \{1, 2 \}$. 
\end{defi}

\begin{rmk}\label{rmk:GammaQ}
Let $i$ be a positive integer satisfying $i<a(d+1)$, and let $x, y \in V_{\Gamma _Q} = \mathbb{Z}^N$ be any two vertices. 
Then, the graph $\Gamma _Q$ is defined so that the following two conditions are equivalent: 
\begin{itemize}
\item
There exists an edge from $x$ to $y$ of weight $i$. 
 \item
$y - x \in i Q$.
\end{itemize}

Note that without the boundedness condition ``$i < a(d+1)$'' in the definition of $F_{Q}$, 
we could have $\# \left( E_{\Gamma _Q}/L \right) = \# F_Q = \infty$, and therefore, $\Gamma _Q$ could not be a periodic graph. 
This specific value ``$a(d+1)$'' will be used in the proof of Lemma \ref{lem:PG}(1) when applying Lemma \ref{lem:normalP}. 
\end{rmk}
\begin{ex}
Let $N = d = 2$, and let $Q = \operatorname{conv} \bigl( \bigl\{ (0, 0), (0, 1/2), (1/2, 0) \bigr\} \bigr)$. 
In this case, we have $a = 2$ and 
\begin{align*}
F_{Q} ={}
&\{ (i, (0,0)) \mid 1 \le i \le 5 \} 
\cup \{ (i,(1,0)) \mid 2 \le i \le 5 \} 
\cup \{ (i,(0,1)) \mid 2 \le i \le 5 \}\\
&\cup \{ (i,(1,1)) \mid i = 4,5 \} 
\cup \{ (i,(2,0)) \mid i = 4,5 \} 
\cup \{ (i,(0,2)) \mid i = 4,5 \}. 
\end{align*}
For example, for all $x \in V_{\Gamma _Q} = \mathbb{Z}^2$, there are four distinct edges from $x$ to $x + (1,0)$ of weights $2, 3, 4$ and $5$. 
\end{ex}

\begin{lem}\label{lem:PG}
Let $x \in \mathbb{Z}^N$. 
\begin{enumerate}
\item 
For any $i \in \mathbb{Z}_{\ge 0}$, 
the condition $x \in \bigcup _{0 \le j \le i} jQ$ is equivalent to the condition $d_{\Gamma _Q}(0,x) \le i$. 
In particular, the cumulative growth sequence $b_{\Gamma _Q, 0, i}$ coincides with 
\[
\# \left( \left( \bigcup _{0 \le j \le i} jQ \right) \cap \mathbb{Z}^N \right).
\]

\item
The growth polytope $P_{\Gamma _{Q}}$ coincides with $\operatorname{conv} (Q \cup \{ 0 \})$.

\item 
When $0 \in Q$, we have $b_{\Gamma _Q, 0, i} = \# \bigl( iQ \cap \mathbb{Z}^N \bigr)$.

\item 
If $0 \in Q$ and $d_Q(x) < \infty$, we have $d_{\Gamma _Q}(0, x) = \lceil d_Q(x) \rceil$. 

\item
The strong connectedness of $\Gamma _Q$ is equivalent to the condition that $0 \in \operatorname{int}(Q)$. 

\item 
If $0 \in \operatorname{int}(Q)$, we have 
\[
C_1 = 0, \qquad C_2 \in \{0\} \cup \left[ \frac{1}{2}, 1 \right). 
\]

\item
Suppose $0 \in \operatorname{int}(Q)$. 
Then, $C_2 = 0$ holds if and only if
\[
(i+1) \operatorname{int}(Q) \cap \mathbb{Z}^N = iQ \cap \mathbb{Z}^N
\]
holds for all $i \in \mathbb{Z}_{\ge 0}$. 
\end{enumerate}
\end{lem}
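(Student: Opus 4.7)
I would address the seven parts in order, with part (1) carrying the main technical weight via a Carath\'eodory-type decomposition of lattice points in $jQ$; parts (2)--(5) and (7) then follow quickly, and (6) requires a short homogeneity argument.

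For (1), the direction $d_{\Gamma_Q}(0, x) \le i \Rightarrow x \in \bigcup_{0 \le j \le i} jQ$ is a convexity check: a walk realising $d_{\Gamma_Q}(0, x)$ writes $x = \sum_k m_k$ with $m_k \in i_k Q \cap \mathbb{Z}^N$ and $\sum i_k \le i$, and setting $j = \sum i_k$ the weighted mean $x/j = \sum (i_k/j)(m_k/i_k)$ lies in $Q$. For the converse I argue by strong induction on $j$ that every $x \in jQ \cap \mathbb{Z}^N$ admits a walk from $0$ of weight at most $j$: when $1 \le j < a(d+1)$ the single edge $(0, x) \in E_{\Gamma_Q}$ of weight $j$ works by the definition of $E_{\Gamma_Q}$, and when $j \ge a(d+1)$ Carath\'eodory gives $x/j = \sum_{k=1}^{d+1} \lambda_k v_k$ with $v_k \in V(Q)$ and $\sum \lambda_k = 1$; by pigeonhole some $\lambda_k j \ge a$, so $x = a v_k + (x - a v_k)$ expresses $x$ as the sum of a lattice point $a v_k \in aQ \cap \mathbb{Z}^N$ (with admissible weight $a$) and a lattice point in $(j-a)Q$ to which induction applies.

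Parts (2)--(5) are then quick consequences. (2) follows from (1) since $0 \in Q$ makes the family $(jQ)_j$ nested, so $\bigcup_{0 \le j \le i} jQ = iQ$. For (3), since $V_{\Gamma_Q}/L$ is a single orbit every cycle $q$ in $\Gamma_Q/L$ yields $\nu(q) = \sum m_k / \sum i_k \in Q$, while each $v \in V(Q)$ is hit by the length-one cycle attached to the lattice point $a v \in aQ \cap \mathbb{Z}^N$; taking convex hulls with $\{ 0 \}$ gives $P_{\Gamma_Q} = \operatorname{conv}(Q \cup \{ 0 \})$. Part (4) combines (1) with $0 \in Q \Rightarrow x \in d_Q(x)\cdot Q \subseteq \lceil d_Q(x) \rceil Q$ to get both $d_{\Gamma_Q}(0, x) \le \lceil d_Q(x) \rceil$ and $d_Q(x) \le d_{\Gamma_Q}(0, x)$. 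Part (5) observes that the set reachable from $0$ is exactly $\bigcup_{j \ge 0}(jQ \cap \mathbb{Z}^N)$, which equals $\mathbb{Z}^N$ iff $\operatorname{cone}(Q) = L_{\mathbb{R}}$, iff $0 \in \operatorname{int}(Q)$.

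For (6), assume $0 \in \operatorname{int}(Q)$; then (3) gives $P_{\Gamma_Q} = Q$ and hence $d_{P_{\Gamma_Q}, \Phi}(0, x) = d_Q(x)$. The equality $C_1 = 0$ is Lemma \ref{lem:C1C2}(2) applied to the single-orbit case, and (4) yields $C_2 = \sup_{x \in \mathbb{Z}^N}(\lceil d_Q(x) \rceil - d_Q(x)) \in [0, 1)$. To rule out $C_2 \in (0, 1/2)$, I use the positive homogeneity $d_Q(k x) = k\, d_Q(x)$ valid for every $x \in \mathbb{Z}^N$ and every positive integer $k$: if $f := d_Q(x) - \lfloor d_Q(x) \rfloor \in (0, 1)$, then $C_2 < 1/2$ forces each $\{ k f \}$ into $\{ 0 \} \cup (1/2, 1)$, but a standard orbit analysis modulo $1$ (finite for rational $f$, equidistribution for irrational $f$) always produces some $k$ with $\{ k f \} \in (0, 1/2]$, a contradiction. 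Thus $C_2 = 0$, completing (6). Part (7) then follows because $(i+1)\operatorname{int}(Q) \cap \mathbb{Z}^N = \{ x \in \mathbb{Z}^N : d_Q(x) < i+1 \}$ and $iQ \cap \mathbb{Z}^N = \{ x \in \mathbb{Z}^N : d_Q(x) \le i \}$ coincide for all $i \ge 0$ precisely when $d_Q$ takes integer values on $\mathbb{Z}^N$, which by the argument above is equivalent to $C_2 = 0$. The main obstacle will be the homogeneity-plus-orbit step in (6); the Carath\'eodory decomposition in (1) is technical but standard, and the remaining parts are essentially formal consequences of (1), (3), and (6).
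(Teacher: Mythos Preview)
Your proposal is correct and, for parts (1)--(5) and (7), tracks the paper's argument closely: the Carath\'eodory splitting you describe for (1) is exactly the content of the paper's Lemma~\ref{lem:normalP}, and the remaining deductions match (the paper simply declares (3) and (5) ``obvious'' and says (7) follows from (4), while you spell these out).

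The only genuine difference is in (6). You invoke an orbit/equidistribution argument on the fractional parts $\{kf\}$ to rule out $C_2 \in (0,\tfrac12)$, splitting into rational and irrational $f$. The paper instead uses a one-line doubling trick: whenever $g(x) := \lceil d_Q(x)\rceil - d_Q(x) \in [0,\tfrac12)$, one has $g(2x) = 2g(x)$, so if $C_2 < \tfrac12$ then iterating $x \mapsto 2x$ forces $g \equiv 0$. Both arguments are valid; the paper's is shorter and avoids the rational/irrational case split, while yours has the minor advantage of not implicitly relying on the sup being attained (which the paper asserts without comment, though it follows from rationality of $Q$). A small expository slip: your sentence ``Thus $C_2 = 0$, completing (6)'' should read ``Thus $C_2 < \tfrac12$ forces $C_2 = 0$, so $C_2 \in \{0\} \cup [\tfrac12,1)$.''
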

\begin{proof}
We prove (1). 
Let $i \in \mathbb{Z}_{\ge 0}$. 
First, we suppose that $d_{\Gamma _Q} (0,x) \le i$. 
Then, there exists a path $p = e_1 \cdots e_{\ell}$ of $\Gamma _Q$ from $0$ to $x$ with $w(p) \le i$. 
By the definition of $\Gamma _Q$ (cf.\ Remark \ref{rmk:GammaQ}), we have $t(e_i) - s(e_i) \in w(e_i) \cdot Q$ for all $1 \le i \le \ell$. 
Hence, we have 
\[
x = \sum _{i=1} ^{\ell} \bigl( t(e_i) - s(e_i) \bigr) \in \left( \sum _{i=1} ^{\ell} w(e_i) \right) \cdot Q = w(p) \cdot Q \subset \bigcup _{0 \le j \le i} jQ. 
\]
Next, we suppose that $x \in jQ$ for some $0 \le j \le i$. 
We set $b := \max \left \{ 0,  \left \lfloor \frac{j - a(d+1)}{a} \right \rfloor + 1 \right \}$. 
Then, by Lemma \ref{lem:normalP} below, we have 
\[
x \in jQ \cap \mathbb{Z}^N \subset \left( (j - ba)Q \cap \mathbb{Z}^N \right) + b \left ( aQ \cap \mathbb{Z}^N \right ). 
\]
Therefore, there exist $m_0 \in (j - ba)Q \cap \mathbb{Z}^N$ and $m_1, \ldots, m_b \in aQ \cap \mathbb{Z}^N$ such that 
$x = \sum_{k=0} ^b m_k$. 
For each $0 \le \ell \le b$, we set $x_{\ell} := \sum _{k=0} ^{\ell} m_k$. 
If $j - ba \not = 0$, there exists an edge from $0$ to $x_0$ of weight $j - ba$ by the definition of $\Gamma _Q$ (cf.\ Remark \ref{rmk:GammaQ}). 
Here, we have used the fact $j - ba < a(d+1)$ which follows from the choice of $b$. 
Therefore, we have $d_{\Gamma _Q} (0, x_0) \le j - ba$ (this is correct even if $j - ba = 0$). 
Similarly, we have $d_{\Gamma _Q}(x_{\ell}, x_{\ell +1}) \le a$ for all $0 \le \ell \le b-1$. 
Hence, we have 
\[
d_{\Gamma _Q}(0, x) \le  d_{\Gamma _Q} (0, x_0) + \sum _{\ell = 0} ^{b-1} d_{\Gamma _Q}(x_{\ell}, x_{\ell+1}) \le j - ba + ba = j \le i, 
\] 
which completes the proof of (1). 

We prove (2). By the definition of the growth polytope $P_{\Gamma _Q}$, we have 
\begin{align*}
P_{\Gamma _Q} 
&= \operatorname{conv} \left( \left \{ \frac{t(e) - s(e)}{w(e)} \ \middle | \  e \in E_{\Gamma _Q} \right \} \cup \{ 0 \} \right) \\
&= \operatorname{conv} \left( \left \{ \frac{m}{i} \ \middle | \  (i,m) \in F_Q \right \} \cup \{ 0 \} \right). 
\end{align*}
By the definition of $F_Q$, we have 
\[
\operatorname{conv} \left( \left \{ \frac{m}{i} \ \middle | \  (i,m) \in F_Q \right \} \right) = Q, 
\]
which completes the proof of (2).

(3) and (4) follow from (1) since we have $\bigcup _{0 \le j \le i} jQ = iQ$ when $0 \in Q$. 
(5) also follows from (1). 
(7) follows from (4). 

We shall see (6) below. 
The assertion $C_1 = 0$ follows from Lemma \ref{lem:C1C2}(2) (or directly from (4)). 
By (4), we have 
\[
C_2 = \sup _{x \in \mathbb{Z}^N} \bigl( \lceil d_Q(x) \rceil - d_Q(x) \bigr)
= \max _{x \in \mathbb{Z}^N} \bigl( \lceil d_Q(x) \rceil - d_Q(x) \bigr)
< 1. 
\]
If $\lceil d_Q(x) \rceil - d_Q(x) \in \left[ 0, \frac{1}{2} \right)$, we have 
$\lceil d_Q(2x) \rceil - d_Q(2x) = 2 (\lceil d_Q(x) \rceil - d_Q(x))$. 
Therefore, we can conclude $C_2 \not \in \left( 0, \frac{1}{2} \right)$. 
\end{proof}

\begin{lem}[{cf.\ \cite{CLS}*{Theorem 2.2.12}}]\label{lem:normalP}
For $k \in \mathbb{R}_{\ge a(d+1)}$, we have 
\[
kQ \cap \mathbb{Z}^N \subset \bigl( (k-a)Q \cap \mathbb{Z}^N \bigr) + \bigl( aQ \cap \mathbb{Z}^N \bigr). 
\]
\end{lem}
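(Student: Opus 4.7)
The plan is to reduce to a single lattice simplex via triangulation and then peel off one vertex of the dilated simplex using pigeonhole. Because $a$ is the minimum positive integer with $aQ$ a lattice polytope, every vertex of $Q$ lies in $\tfrac{1}{a}\mathbb{Z}^N$. I would fix once and for all a triangulation $T$ of $Q$ into $d$-simplices whose vertices all belong to $V(Q)$; such a triangulation (for instance a pulling triangulation) exists for any convex polytope. For every simplex $\Delta=\operatorname{conv}(v_0,\ldots,v_d)\in T$, we then have $av_i\in\mathbb{Z}^N$, and $a\Delta$ is a lattice simplex contained in $aQ$.

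Given $x\in kQ\cap\mathbb{Z}^N$ with $k\ge a(d+1)$, I would choose $\Delta=\operatorname{conv}(v_0,\ldots,v_d)\in T$ with $x\in k\Delta$ and consider the barycentric expansion
\[
x=\sum_{i=0}^{d}\alpha_i\,v_i,\qquad \alpha_i\ge 0,\qquad \sum_{i=0}^{d}\alpha_i=k.
\]
Since there are $d+1$ nonnegative coefficients summing to $k\ge a(d+1)$, pigeonhole supplies an index $i^{*}$ with $\alpha_{i^{*}}\ge a$. Set
\[
z:=a\,v_{i^{*}}\qquad\text{and}\qquad y:=x-z.
\]
Then $z$ is a vertex of the lattice simplex $a\Delta$, so $z\in aQ\cap\mathbb{Z}^N$. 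The expansion
\[
y=(\alpha_{i^{*}}-a)\,v_{i^{*}}+\sum_{i\neq i^{*}}\alpha_i\,v_i
\]
has nonnegative coefficients (using $\alpha_{i^{*}}\ge a$) summing to $k-a$, so $y\in(k-a)\Delta\subset(k-a)Q$, and $y\in\mathbb{Z}^N$ because both $x$ and $z$ are. This exhibits $x=y+z$ as a decomposition of the required form.

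There is no serious obstacle: the hypothesis $k\ge a(d+1)$ is tailored exactly so that pigeonhole on the $d+1$ barycentric coordinates produces one coordinate of size at least $a$, which is precisely what lets us subtract the single lattice point $av_{i^{*}}\in a\Delta$ while keeping the remainder inside $(k-a)\Delta$. The only standard input used is the existence of a triangulation of a convex polytope employing only its own vertices.
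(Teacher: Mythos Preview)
Your proof is correct and follows essentially the same route as the paper's: reduce to a simplex via triangulation, expand in barycentric coordinates, use pigeonhole to find a coefficient $\ge a$, and peel off the corresponding lattice vertex $av_{i^*}$. The paper phrases the reduction as ``we may assume that $Q$ is simplicial'' rather than explicitly picking a simplex from a triangulation, but the argument is otherwise identical.
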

\begin{proof}
By taking a triangulation of $Q$, we may assume that $Q$ is a simplex. 
Let $v_0, \ldots , v_d \in \mathbb{Q}^N$ be its vertices. 
By the choice of $a$, we may write $v_i = m_i / a$ for some $m_i \in \mathbb{Z}^N$. 
Let $m \in kQ \cap \mathbb{Z}^N$. 
Then, we may uniquely write 
\[
m = \sum _{i=0} ^d \alpha_i v_i = \sum _{i=0} ^d \frac{\alpha _i}{a} \cdot m_i
\]
for some $\alpha _i \in \mathbb{R}_{\ge 0}$ satisfying $\sum _{i=0} ^d \alpha _i = k$. 
Since $\sum _{i=0} ^d \alpha _i = k \ge a(d+1)$, there exists $i \in \{ 0, 1, \ldots , d \}$ 
such that $\frac{\alpha _i}{a} \ge 1$. 
Then, for such $i$, we have 
\[
m = (m-m_i)+m_i \in \bigl( (k-a)Q \cap \mathbb{Z}^N \bigr) + \bigl( aQ \cap \mathbb{Z}^N \bigr). 
\]
We complete the proof. 
\end{proof}

\begin{rmk}\label{rmk:ref2}
For a lattice polytope $Q$, it is known that $Q$ is a reflexive polytope if and only if the condition 
\[
(i+1) \operatorname{int}(Q) \cap \mathbb{Z}^N = iQ \cap \mathbb{Z}^N
\]
holds for all $i \in \mathbb{Z}_{\ge 0}$ (cf.\ \cite{BR}*{Section 4.4}). 
Therefore, Theorem \ref{thm:ET}(3) can be seen as a generalization of the reciprocity laws of the Ehrhart series of reflexive polytopes. 
\end{rmk}

\section{$P$-initial vertex and well-arranged graphs}\label{section:PiniWA}
\subsection{The $P$-initial case}
In this subsection, we treat a periodic graph $(\Gamma, L)$ and a $P$-initial vertex $x_0 \in V_{\Gamma}$. 
In this case, we can calculate the invariant $C_2$ (Proposition \ref{prop:C2}) and 
a quasi-period of the growth sequence of $\Gamma$ with the start point $x_0$ (Theorem \ref{thm:Piniqp}).

The following lemma will be used in Theorem \ref{thm:Piniqp}. 
\begin{lem}\label{lem:fg}
Let $(\Gamma, L)$ be a periodic graph, and let $x_0 \in V_{\Gamma}$. Suppose that $x_0$ is $P$-initial. 
For each $v \in V(P_{\Gamma}) \setminus \{ 0 \}$, we pick a cycle $q_v \in \nu^{-1}(v)$ such that 
$\overline{x_0} \in \operatorname{supp}(q_v)$. 
We define
\[
B := \{ (i, y) \in \mathbb{Z}_{\ge 0} \times V_{\Gamma} \mid d_{\Gamma}(x_0, y) \le i \} \subset \mathbb{Z}_{\ge 0} \times V_{\Gamma}. 
\]
We define a subset $M' \subset \mathbb{Z}_{\ge 0} \times L$ by 
\[
M' := \bigl\{ \bigl( w(q_v), \mu(\langle q_v \rangle) \bigr) \ \big | \ v \in V(P_{\Gamma}) \setminus \{ 0 \} \bigr \}. 
\]
Let $M \subset \mathbb{Z}_{\ge 0} \times L$ be the submonoid generated by $M'$ and $(1,0)$. 

Then, $B$ is a finitely generated $M$-module. 
\end{lem}

\begin{proof}
First, we prove that $B$ is an $M$-module (i.e.\ $M + B \subset B$). 
Take $(i,y) \in B$ and $v \in V(P_{\Gamma}) \setminus \{0\}$. 
Then, by the definition of $B$, there exists a walk $p$ in $\Gamma$ from $x_0$ to $y$ satisfying $w(p) \le i$. 
By the choice of $q_v$, we have $\overline{x_0} \in \operatorname{supp}(q_v)$. 
In particular, we have 
$\operatorname{supp} (\overline{p}) \cap \operatorname{supp}(q_v) \not = \emptyset$.
Therefore, there exists a path $p'$ in $\Gamma$ from $x_0$ such that 
\begin{align*}
\langle \overline{p'} \rangle 
= \langle q_v \rangle + \langle \overline{p} \rangle. 
\end{align*}
Then, we have 
\[
t(p') 
= \mu(\langle q_v \rangle) + t(p)
= \mu(\langle q_v \rangle) + y. 
\]
Furthermore, we have 
\[
w(p') = w(q_v) + w(p) \le w(q_v) + i. 
\]
They show that $\bigl( w(q_v) + i, \mu(\langle q_v \rangle) + y \bigr) \in B$. 
Hence, we can conclude that $M' + B \subset B$. 
Since it is clear that $(1,0) + B \subset B$, we conclude that $M + B \subset B$. 

Next, we prove that the $M$-module $B$ is generated by some finite subset $B' \subset B$. 
For each $q \in \operatorname{Cyc}_{\Gamma /L}$, we take a positive integer $d_q$ with the following condition: 
\begin{itemize}
\item 
Let $\operatorname{Facet}' (P_{\Gamma})$ be the set of 
$\sigma \in \operatorname{Facet}(P_{\Gamma})$ satisfying $0 \not \in \sigma$. 
First, for each $\sigma \in \operatorname{Facet}'(P_{\Gamma})$, 
we fix a triangulation $T_{\sigma}$ of $\sigma$ such that $V(\Delta) \subset V(\sigma)$ holds for all $\Delta \in T_{\sigma}$. 

\item
For each $q \in \operatorname{Cyc}_{\Gamma /L}$, we take $\sigma \in \operatorname{Facet}'(P_{\Gamma})$ 
and $\Delta \in T_{\sigma}$ such that $\nu(q) \in \mathbb{R}_{\ge 0} \Delta$. 
Then, we take a positive integer $d_q$ such that 
\[
d_q \cdot \mu (\langle q \rangle) 
= \sum _{v \in V(\Delta)} b_v \cdot \mu (\langle q_v \rangle)
\]
holds for some $b_v \in \mathbb{Z}_{\ge 0}$. 
\end{itemize}
We note that for $q \in \operatorname{Cyc}_{\Gamma /L}$, $\Delta$ and $b_v$'s above, we have 
\[
d_q \cdot w(q) 
\ge \sum _{v \in V(\Delta)} b_v \cdot w(q_v)
\]
since $\frac{\mu(\langle q \rangle)}{w(q)} = \nu(q) \in [0,1] \cdot \Delta$. 
Therefore, we have 
\[
d_q \cdot \bigl( w(q), \mu (\langle q \rangle) \bigr) \in M. 
\]

We shall show that the $M$-module $B$ is generated by 
\[
B' := \left \{ (i,y) \in B \ \middle | \ i \le W \cdot \bigl( \# (V_{\Gamma}/L) \bigr)^2 
    + \sum _{q \in \operatorname{Cyc}_{\Gamma /L}} (d_q - 1) \cdot w(q) \right\}, 
\]
where $W := \max _{e \in E_{\Gamma}} w(e)$. 

Take $(i,y) \in B$. 
Then, by the definition of $B$, there exists a walk $p$ in $\Gamma$ from $x_0$ to $y$ satisfying $w(p) \le i$. 
By decomposing $\overline{p}$ (Lemma \ref{lem:bunkai}(1)), 
there exists a walkable sequence $(q_0, q_1, \ldots, q_{\ell})$ such that 
$\langle \overline{p} \rangle = \sum _{i=0} ^{\ell} \langle q_i \rangle$. 
By Lemma \ref{lem:bunkai}(2), by rearranging the indices of $q_1, \ldots, q_{\ell}$, 
we may assume the following condition for each $0 \le j \le \ell -1$: 
\begin{itemize}
\item 
$\left( \bigcup _{0 \le i \le j} \operatorname{supp}(q_i) \right) \cap \operatorname{supp}(q_{j+1}) \not = \emptyset$. 
\end{itemize}
Furthermore, we may also assume the following condition for each $0 \le j \le \ell -1$:
\begin{itemize}
\item
If $\bigcup _{0 \le i \le j} \operatorname{supp}(q_i) \not = \operatorname{supp}(\overline{p})$, then 
$\operatorname{supp}(q_{j+1}) \not \subset \bigcup _{0 \le i \le j} \operatorname{supp}(q_i)$. 
\end{itemize}
In particular, for $\ell' := \#(\operatorname{supp}(\overline{p})) - \#(\operatorname{supp}(q_0)) 
\le \# (V_{\Gamma}/L) - \operatorname{length}(q_0) - 1$, 
it follows that 
\begin{itemize}
\item $(q_0, q_1, \ldots, q_{\ell'})$ is a walkable sequence, and 
\item $\bigcup _{0 \le i \le \ell'} \operatorname{supp}(q_i) = \operatorname{supp}(\overline{p})$. 
\end{itemize}

For each $q \in \operatorname{Cyc}_{\Gamma /L}$, we define $\alpha _q \in \mathbb{Z}_{\ge 0}$ by 
\[
\alpha _q := \# \{ \ell' + 1 \le i \le \ell \mid q_i = q \}. 
\]
Let $\beta _q \in \mathbb{Z}_{\ge 0}$ be the integer satisfying 
$0 \le \beta _q < d_q$ and $\beta _q \equiv \alpha _q \pmod{d_q}$. 
We set $\ell '' := \ell ' + \sum _{q \in \operatorname{Cyc}_{\Gamma /L}} \beta _q$. 
Then, by rearranging the indices of $q_{\ell ' +1}, \ldots, q_{\ell}$, 
we may assume the following condition
\[
\# \{ \ell' + 1 \le i \le \ell '' \mid q_i = q \} = \beta _q. 
\]
Since $\operatorname{supp}(q_i) \subset \operatorname{supp}(\overline{p})$ for any
$1 \le i \le \ell$, the sequence $(q_0, q_1, \ldots , q_{\ell ''})$ is also a walkable sequence. 
Furthermore, since
\begin{align*}
\sum _{i=0} ^{\ell'} \operatorname{length}(q_i)
&= \operatorname{length}(q_0) + \sum _{i=1} ^{\ell'} \operatorname{length}(q_i) \\
&\le \operatorname{length}(q_0) + \ell ' \cdot \# (V_{\Gamma}/L) \\
&\le \operatorname{length}(q_0) + (\# (V_{\Gamma}/L) - \operatorname{length}(q_0) - 1) \cdot \# (V_{\Gamma}/L) \\
&\le (\# (V_{\Gamma}/L))^2, 
\end{align*}
we have 
\[
\sum _{i=0} ^{\ell'} w(q_i) \le W \cdot (\# (V_{\Gamma}/L))^2. 
\]
We also have 
\[
\sum _{i=\ell ' +1} ^{\ell ''} w(q_i) 
= \sum _{q \in \operatorname{Cyc}_{\Gamma /L}} \beta _q \cdot w(q)
\le \sum _{q \in \operatorname{Cyc}_{\Gamma /L}} (d_q - 1) \cdot w(q)
\]

Since $(q_0, q_1, \ldots , q_{\ell ''})$ is a walkable sequence, there exists 
a path $p'$ in $\Gamma$ from $x_0$ such that 
$\langle \overline{p'} \rangle = \sum _{i=0} ^{\ell''} \langle q_i \rangle$. 
Then, we have 
\begin{align*}
w(p') 
&= \sum _{i=0} ^{\ell'} w(q_i) + \sum _{i=\ell ' +1} ^{\ell ''} w(q_i) \\
& \le W \cdot \bigl( \# (V_{\Gamma}/L) \bigr)^2 
    + \sum _{q \in \operatorname{Cyc}_{\Gamma /L}} (d_q - 1) \cdot w(q), 
\end{align*}
and hence, $(w(p'), t(p')) \in B'$. 
Since 
\begin{align*}
&i - w(p') 
\ge w(p) - w(p') 
= \sum _{i=\ell'' +1} ^{\ell} w(q_i)
= \sum _{q \in \operatorname{Cyc}_{\Gamma /L}} (\alpha _q - \beta _q) \cdot w(q), \\
&y-t(p')=t(p)-t(p') 
= \sum _{i=\ell'' +1} ^{\ell} \mu( \langle q_i \rangle)
= \sum _{q \in \operatorname{Cyc}_{\Gamma /L}} (\alpha _q - \beta _q) \cdot \mu( \langle q \rangle), 
\end{align*}
and $d_q \mid (\alpha _q - \beta _q)$ for each $q \in \operatorname{Cyc}_{\Gamma /L}$, 
we have 
\[
(i,y) - \bigl( w(p'), t(p') \bigr) 
= \bigl( i-w(p) ,0 \bigr ) 
+ \sum _{q \in \operatorname{Cyc}_{\Gamma /L}} \frac{\alpha _q - \beta _q}{d_q} 
\cdot d_q \cdot \bigl( w(q), \mu (\langle q \rangle) \bigr)
\in M. 
\]
Therefore, we have $B = M + B'$. 
Since $B'$ is a finite set, we can conclude that 
$B$ is a finitely generated $M$-module. 
\end{proof}

\begin{thm}\label{thm:Piniqp}
Let $(\Gamma, L)$ be a periodic graph, and let $x_0 \in V_{\Gamma}$. Suppose that $x_0$ is $P$-initial. 
For each $v \in V(P_{\Gamma}) \setminus \{ 0 \}$, we pick a cycle $q_v \in \nu^{-1}(v)$ such that 
$\overline{x_0} \in \operatorname{supp}(q_v)$. 
Then, 
\[
\operatorname{LCM} \{ w(q_v) \mid v \in V(P_{\Gamma}) \setminus \{0\} \}
\]
is a quasi-period of the growth sequence $(s_{\Gamma, x_0, d})_d$. 
More precisely, the growth series $G_{\Gamma, x_0}(t)$ is of the form
\[
G_{\Gamma, x_0}(t) = \frac{Q(t)}{\prod _{v \in V(P_{\Gamma}) \setminus \{ 0 \}} (1-t^{w(q_v)})}
\]
with some polynomial $Q(t)$. 

In particular, if the graph $\Gamma$ is unweighted, 
then $\operatorname{LCM} \left \{ 1, 2, \ldots, \#(V_{\Gamma}/L) \right \}$ is 
a quasi-period of the growth sequence. 
\end{thm}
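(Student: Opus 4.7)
My plan is to realize the cumulative growth series as the Hilbert series of a finitely generated graded module over a weighted polynomial ring, then transfer the result to $G_{\Gamma, x_0}(t)$ via $G_{\Gamma, x_0}(t) = (1-t)\sum_d b_{\Gamma, x_0, d}\, t^d$.

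First, I exploit the $P$-initial hypothesis to lift each chosen cycle $q_v$ (for $v \in V(P_\Gamma) \setminus \{0\}$) to a walk $p_v$ in $\Gamma$ from $x_0$ to $x_0 + w(q_v)v$ of weight $w(q_v)$: since $\overline{x_0} \in \operatorname{supp}(q_v)$, a cyclic rotation of $q_v$ starts at $\overline{x_0}$ and lifts uniquely starting at $x_0$ (Remark \ref{rmk:bunkai}(2)), ending at $x_0 + \mu(\langle q_v \rangle) = x_0 + w(q_v) v$. Concatenating $p_v$ with the $L$-translate of a shortest walk $x_0 \to y$ then yields the key inequality
\[
d_\Gamma(x_0,\, y + w(q_v) v) \;\le\; d_\Gamma(x_0, y) + w(q_v) \quad \text{for every } y \in V_\Gamma.
\]
Using this, I introduce the weighted polynomial ring $R := k[x_v : v \in V(P_\Gamma) \setminus \{0\}]$ with $\deg(x_v) := w(q_v)$ and form the graded $R$-module $M := \bigoplus_{d \ge 0} M_d$, where $M_d$ has basis $\{e_{y,d} : y \in B_{\Gamma, x_0, d}\}$ and $x_v \cdot e_{y,d} := e_{y + w(q_v) v,\, d + w(q_v)}$. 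The inequality above guarantees well-definedness, and by construction $\operatorname{Hilb}(M, t) = \sum_d b_{\Gamma, x_0, d}\, t^d$.

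The main obstacle is showing that $M$ is finitely generated as an $R$-module. Splitting by cosets of $V_\Gamma/L$ and fixing a representative $y_r$ for each $r \in V_\Gamma/L$, this reduces to showing that each set
\[
\widetilde{M}_r := \bigl\{ (u, d) \in L \oplus \mathbb{Z}_{\ge 0} : d \ge d_\Gamma(x_0,\, y_r + u) \bigr\}
\]
is finitely generated as a module over the affine monoid $\Lambda := \sum_v \mathbb{Z}_{\ge 0}\, (w(q_v) v,\, w(q_v)) \subset L \oplus \mathbb{Z}$. This is a Gordan-lemma-type finiteness statement: since $\widetilde{M}_r$ lies in a bounded neighborhood of the rational polyhedral cone spanned by the generators of $\Lambda$ (the bound being controlled by the asymptotic comparison between $d_\Gamma$ and $d_{P_\Gamma}$ used in Theorem \ref{thm:asymp}), it follows from the structure theory of finitely generated affine semigroup modules developed in \cite{BG2009} and applied in the proof of Theorem \ref{thm:NSMN}.

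Once finite generation is established, the standard Hilbert-series theorem for graded modules over a weighted polynomial ring delivers
\[
\sum_{d \ge 0} b_{\Gamma, x_0, d}\, t^d \;=\; \frac{Q_b(t)}{\prod_{v \in V(P_\Gamma) \setminus \{0\}} (1 - t^{w(q_v)})}
\]
for some polynomial $Q_b(t)$, whence $G_{\Gamma, x_0}(t) = (1-t)\cdot \sum_d b_{\Gamma, x_0, d}\, t^d$ has the desired form with $Q(t) := (1-t)Q_b(t)$. The quasi-period claim is then immediate: each factor $1 - t^{w(q_v)}$ divides $1 - t^N$ for $N := \operatorname{LCM}\{w(q_v)\}$, so the denominator divides a power of $1 - t^N$ and $N$ is a quasi-period. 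Finally, if $\Gamma$ is unweighted then any cycle $q$ in $\Gamma/L$ satisfies $w(q) = \operatorname{length}(q) \le \#(V_\Gamma/L)$ (since the target vertices along the cycle are distinct in $V_{\Gamma/L}$), so $N$ divides $\operatorname{LCM}\{1, 2, \ldots, \#(V_\Gamma/L)\}$.
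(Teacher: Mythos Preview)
Your overall strategy---realize the cumulative series as the Hilbert series of a finitely generated graded module and then multiply by $(1-t)$---is exactly the paper's, but your monoid $\Lambda=\sum_v \mathbb{Z}_{\ge 0}(w(q_v)v,\,w(q_v))$ is missing the degree-shift generator $(0,1)$, and this is a genuine gap: without it the module $\widetilde{M}_r$ need not be finitely generated. Take $L=\mathbb{Z}$, $V_\Gamma=\mathbb{Z}$ with a single directed edge $n\to n+1$ of weight $1$; then $x_0=0$ is $P$-initial, $V(P_\Gamma)\setminus\{0\}=\{1\}$, $\Lambda=\mathbb{Z}_{\ge 0}(1,1)$, and $\widetilde{M}_{\overline{x_0}}=\{(u,d):0\le u\le d\}$ requires the infinitely many $\Lambda$-generators $(0,0),(0,1),(0,2),\dots$. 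Equivalently, $\sum_d b_d\,t^d=1/(1-t)^2$ is \emph{not} a polynomial over $\prod_v(1-t^{w(q_v)})=(1-t)$, so your claimed form for the $b$-series is simply false here. Your appeal to Theorem~\ref{thm:asymp} already tacitly assumes strong connectivity, which the statement does not; and even under that hypothesis, ``lies in a bounded neighbourhood of the cone'' is not by itself a finiteness criterion for $\Lambda$-modules, nor does the proof of Theorem~\ref{thm:NSMN} supply one over your $\Lambda$ (there too the monoids contain the degree shift).

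The fix is precisely what the paper does: enlarge $\Lambda$ by $(0,1)$ (equivalently, adjoin to $R$ a variable of degree~$1$ acting via the inclusions $B_{\Gamma,x_0,d}\hookrightarrow B_{\Gamma,x_0,d+1}$). Finite generation over this enlarged monoid is then proved by an explicit combinatorial argument rather than an appeal to Gordan: decompose $\overline{p}$ into a path plus cycles, retain a bounded initial walkable subsequence whose supports already cover $\operatorname{supp}(\overline{p})$, and for each remaining cycle $q$ choose $d_q\in\mathbb{Z}_{>0}$ so that $d_q\cdot\mu(\langle q\rangle)=\sum_{v\in V(\Delta)}b_v\,\mu(\langle q_v\rangle)$ with $b_v\in\mathbb{Z}_{\ge 0}$, for some simplex $\Delta$ in a fixed triangulation of a facet of $P_\Gamma$ containing $\nu(q)$. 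Since $\nu(q)\in[0,1]\cdot\Delta$ one gets $d_q\,w(q)\ge\sum_v b_v\,w(q_v)$, and the nonnegative excess is absorbed by the degree-shift generator, so $d_q\cdot(\mu(\langle q\rangle),\,w(q))$ lies in the enlarged monoid. The extra $(1-t)$ this puts in the denominator of $\sum_d b_d\,t^d$ then cancels when you multiply by $(1-t)$ to pass to $G_{\Gamma,x_0}(t)$, yielding the stated form.
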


\begin{proof}
We keep the notation $B$, $M'$ and $M$ in Lemma \ref{lem:fg}. 
For $i \in \mathbb{Z}_{\ge 0}$, 
we define $B_i := \{ y \in V_{\Gamma} \mid (i,y) \in B \}$. 
In this notation, we have $b_{\Gamma, x_0, d} = \# B_d$. 

By Lemma \ref{lem:fg}, $B$ is a finitely generated $M$-module. 
Furthermore, the monoid $M$ is generated by the finite set $M' \cup \{ (1,0) \}$, 
and the degree of each element of $M' \cup \{ (1,0) \}$ divides 
$\operatorname{LCM} \{ w(q_v) \mid v \in V(P_{\Gamma}) \setminus \{0\} \}$. 
Therefore, the assertion follows from Theorem \ref{thm:Serre}.   
\end{proof}

\noindent
The following theorem is well-known. 
\begin{thm}[{cf.\ \cite{BG2009}*{Theorem 6.38}}]\label{thm:Serre}
Let $N$ be a monoid. 
Let $M' \subset \mathbb{Z}_{> 0} \times N$ be a finite subset, and 
let $M \subset \mathbb{Z}_{\ge 0} \times N$ be the submonoid generated by $M'$. 
Let $X \subset \mathbb{Z}_{\ge 0} \times N$ be a finitely generated $M$-submodule. 
Then, the function 
\[
h: \mathbb{Z}_{\ge 0} \to \mathbb{Z}_{\ge 0}; \quad  i \mapsto \# \{ x \in N \mid (i,x) \in X \}
\]
is of quasi-polynomial type. 

More precisely, 
its generating function $\sum _{i \ge 0} h(i) t^i$ is of the form 
\[
\frac{Q(t)}{\prod _{a \in M'} (1 - t^{\deg a})}
\]
with some polynomial $Q(t)$. 
Here, $\deg : \mathbb{Z}_{\ge 0} \times N \to \mathbb{Z}_{\ge 0}$ denotes the first projection.
\end{thm}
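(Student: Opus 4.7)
The plan is to reduce the statement to the classical Hilbert--Serre theorem in the framework of graded commutative algebra. First I would set $R := \mathbb{Z}[M]$, the monoid algebra of $M$ over $\mathbb{Z}$, viewed as a $\mathbb{Z}_{\ge 0}$-graded ring with grading induced by the first projection $\deg$. Since every element of $M'$ lies in $\mathbb{Z}_{>0} \times N$, we have $R_0 = \mathbb{Z}$, and the image in $R$ of the finite set $M'$ provides a finite set of homogeneous generators of $R$ over $R_0$ of positive degrees. I would then consider the free abelian group $V := \mathbb{Z}\langle X \rangle$ on $X$, which inherits a natural structure of graded $R$-module from the $M$-action on $X$. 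Finite generation of $V$ as a graded $R$-module is an immediate consequence of the finite generation of $X$ as an $M$-module.

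The key preliminary check is that each graded piece $V_i$ is a free $\mathbb{Z}$-module of finite rank equal to $h(i)$. This follows because if $x_1, \ldots, x_k \in X$ generate $X$ over $M$, then every element of $X$ of degree $i$ is of the form $x_j + m$ for some $j$ and some $m \in M$ of degree $i - \deg x_j$; and $M$ itself has only finitely many elements of any prescribed degree $d$, since each such element arises as a sum $\sum_{a \in M'} n_a \cdot a$ with the nonnegative integers $n_a$ constrained by $\sum_{a \in M'} n_a \deg a = d$.

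Next, via the surjection of graded rings $\mathbb{Z}[y_a : a \in M'] \twoheadrightarrow R$ sending $y_a \mapsto [a]$ with $\deg y_a := \deg a$, I would view $V$ as a finitely generated graded module over the polynomial ring $A := \mathbb{Z}[y_a : a \in M']$, and then invoke the classical Hilbert--Serre theorem to conclude that the Hilbert series
\[
H_V(t) \;=\; \sum_{i \ge 0} \operatorname{rank}_{\mathbb{Z}}(V_i)\, t^i \;=\; \sum_{i \ge 0} h(i)\, t^i
\]
is a rational function of the form $Q(t) / \prod_{a \in M'}(1 - t^{\deg a})$ for some $Q(t) \in \mathbb{Z}[t]$. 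The standard proof proceeds by induction on the number of variables via the exact sequence $0 \to K \to V(-\deg a) \xrightarrow{\, y_a\,} V \to V/y_a V \to 0$, which yields $(1 - t^{\deg a}) H_V(t) = H_{V/y_a V}(t) - H_K(t)$, and both $K$ and $V/y_a V$ are annihilated by $y_a$ and hence are finitely generated graded modules over a polynomial ring in one fewer variable.

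Finally, the quasi-polynomial type of $h$ is extracted from the rational expression by partial fractions: every pole of $H_V(t)$ is a root of unity whose order divides $\operatorname{LCM}\{\deg a \mid a \in M'\}$, so for all sufficiently large $i$ the coefficient $h(i)$ agrees with a quasi-polynomial of quasi-period dividing this LCM. The main obstacle is not the Hilbert--Serre step itself (which may simply be cited from \cite{BG2009}*{Theorem 6.38}), but the preliminary bookkeeping to verify that $V$ is finitely generated as a \emph{graded} $R$-module with each homogeneous component of finite $\mathbb{Z}$-rank; once this is done, both the claimed form of the generating function and the quasi-polynomial type follow from classical graded module theory.
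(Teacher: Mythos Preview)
The paper does not prove this statement; it records it as well-known and cites \cite{BG2009}*{Theorem 6.38}. Your reduction to Hilbert--Serre via the monoid algebra $R=\mathbb{Z}[M]$ and the free module $V=\mathbb{Z}\langle X\rangle$ is exactly the standard argument behind that reference, and the preliminary checks you outline (that $R_0=\mathbb{Z}$, that each $V_i$ is $\mathbb{Z}$-free of rank $h(i)$, and that $V$ is finitely generated over $A=\mathbb{Z}[y_a:a\in M']$) are all correct. The only cosmetic point is that the textbook Hilbert--Serre theorem is usually stated over a field, so it is cleanest to tensor with $\mathbb{Q}$ before invoking it; this is harmless since each $V_i$ is $\mathbb{Z}$-free, and alternatively your exact-sequence induction works directly over $\mathbb{Z}$ because multiplication by $y_a$ sends basis elements of $V$ to basis elements, so the kernel and cokernel remain degreewise $\mathbb{Z}$-free.
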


\begin{prop}\label{prop:C2}
Let $(\Gamma, L)$ be a strongly connected periodic graph. 
Let $\Phi:V_{\Gamma} \to L_{\mathbb{R}}$ be a periodic realization, and let $x_0 \in V_{\Gamma}$. 
Suppose that $x_0$ is $P$-initial. 
We define a bounded set $Q \subset L_{\mathbb{R}}$ as follows:
\begin{itemize}
\item 
For each $v \in V(P_{\Gamma})$, we pick a cycle $q_v \in \nu^{-1}(v)$ such that 
$\overline{x_0} \in \operatorname{supp}(q_v)$, and we define $d_v := w(q_v)$. 

\item 
For each $\sigma \in \operatorname{Facet}(P_{\Gamma})$, 
we fix a triangulation $T_{\sigma}$ of $\sigma$ such that $V(\Delta) \subset V(\sigma)$ holds for any $\Delta \in T_{\sigma}$. 

\item 
We define a bounded set $Q \subset L_{\mathbb{R}}$ as follows:
\[
Q := 
\bigcup _{\substack{\sigma \in \operatorname{Facet}(P_{\Gamma}),\\ 
\Delta \in T_{\sigma}}}
\left( \sum _{v \in V(\Delta)}  [0,1) d_v v \right)
\subset L_{\mathbb{R}}. 
\]
\end{itemize}
Then, we have
\[
C_2(\Gamma, \Phi, x_0) = 
\max \bigl \{ d_{\Gamma}(x_0, y) - d_{P_{\Gamma}, \Phi} (x_0, y) \ \big | \ 
y \in V_{\Gamma},\ \Phi(y) - \Phi(x_0) \in Q \bigr \}.
\]
\end{prop}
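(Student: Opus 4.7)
The plan is to reduce the supremum defining $C_2(\Gamma, \Phi, x_0)$ to a maximum over the finitely many $y \in V_\Gamma$ with $\Phi(y) - \Phi(x_0) \in Q$. One direction is immediate: $Q$ is bounded, and the injectivity of $x \mapsto (\Phi(x), \overline{x})$ from Remark \ref{rmk:finmap}(2) shows the candidate set is finite, so the right-hand side is automatically $\le C_2(\Gamma, \Phi, x_0)$. The real content is the reverse inequality, which I would establish by showing that any $y \in V_\Gamma$ can be replaced by a $y'$ with $\Phi(y') - \Phi(x_0) \in Q$ satisfying
\[
d_\Gamma(x_0, y) - d_{P_\Gamma, \Phi}(x_0, y) \le d_\Gamma(x_0, y') - d_{P_\Gamma, \Phi}(x_0, y').
\]

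To build $y'$, I would use that $0 \in \operatorname{int}(P_\Gamma)$ (Lemma \ref{lem:int}), so $L_\mathbb{R}$ is covered by the simplicial cones $\mathbb{R}_{\ge 0} \cdot \Delta$ for $\Delta \in T_\sigma$, $\sigma \in \operatorname{Facet}(P_\Gamma)$. Pick one containing $z := \Phi(y) - \Phi(x_0)$ and write uniquely $z = \sum_{v \in V(\Delta)} \gamma_v (d_v v)$ with $\gamma_v \ge 0$. Setting $n_v := \lfloor \gamma_v \rfloor$ and using $d_v v = \mu(\langle q_v \rangle) \in L$, define $y' := y - \sum_v n_v\, \mu(\langle q_v \rangle) \in V_\Gamma$, which gives $\Phi(y') - \Phi(x_0) = \sum_v (\gamma_v - n_v)(d_v v) \in Q$. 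The polytope-distance comparison is then short: both $z$ and $\Phi(y') - \Phi(x_0)$ lie in the same simplicial cone, on which $d_{P_\Gamma}$ coincides with the affine linear functional taking value $1$ on the hyperplane carrying $\sigma$, so one reads off directly that $d_{P_\Gamma, \Phi}(x_0, y) = d_{P_\Gamma, \Phi}(x_0, y') + \sum_v n_v d_v$.

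The graph-distance inequality $d_\Gamma(x_0, y) \le d_\Gamma(x_0, y') + \sum_v n_v d_v$ is where I expect the main obstacle. Starting from a shortest walk $p'$ from $x_0$ to $y'$, I would decompose $\overline{p'}$ via Lemma \ref{lem:bunkai}(1) into a walkable sequence whose path part starts at $\overline{x_0}$, then append $n_v$ copies of $q_v$ for each $v \in V(\Delta)$. The $P$-initial hypothesis places $\overline{x_0} \in \operatorname{supp}(q_v)$ for every such $v$, so Lemma \ref{lem:bunkai}(2) keeps the extended sequence walkable. Lifting by Remark \ref{rmk:bunkai} yields a walk $p$ from $x_0$ of weight $d_\Gamma(x_0, y') + \sum_v n_v d_v$ with $\operatorname{vec}_\Phi(p) = \Phi(y) - \Phi(x_0)$; the delicate point is identifying $t(p) = y$ (not merely some $L$-translate sharing the same image under $\Phi$), which I would handle via Remark \ref{rmk:finmap}(2), using that $\overline{t(p)} = \overline{y}$ together with $\Phi(t(p)) = \Phi(y)$ pins $t(p) = y$ uniquely. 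Subtracting the two comparisons cancels $\sum_v n_v d_v$ and yields the required replacement inequality, completing the reduction.
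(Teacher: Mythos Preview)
Your proposal is correct and follows essentially the same approach as the paper: the paper's proof simply says to rerun the $C'_2$-half of the proof of Theorem~\ref{thm:asymp} with the $P$-initial cycles $q_v$ (so that $\overline{x_0}\in\operatorname{supp}(q_v)$ replaces the condition $\operatorname{supp}(\overline{q})=V_{\Gamma/L}$) and with $C'_2$ replaced by the finite maximum over $Q$. Your reduction via $y'=y-\sum_v n_v\,\mu(\langle q_v\rangle)$ and the splicing of cycles into a shortest walk from $x_0$ to $y'$ is exactly that argument written out; the only cosmetic difference is that the paper computes $t(p)=y$ directly from $t(p)-t(p')=\sum_v n_v d_v v\in L$ rather than via Remark~\ref{rmk:finmap}(2), but both justifications are valid.
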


\begin{proof}
By the definition of $C_2(\Gamma, \Phi, x_0)$, we have 
\[
C_2(\Gamma, \Phi, x_0) \ge 
\max \bigl \{ d_{\Gamma}(x_0, y) - d_{P_{\Gamma}, \Phi} (x_0, y) \ \big | \ 
y \in V_{\Gamma},\ \Phi(y) - \Phi(x_0) \in Q \bigr \}.
\]
The opposite inequality follows from the same argument as the proof of Theorem \ref{thm:asymp} 
by making the following modifications: 
\begin{itemize}
\item
Replacing $d_v$ and $q_v$ in the proof of Theorem \ref{thm:asymp} with $d_v$ and $q_v$ in the statement of Proposition \ref{prop:C2}.

\item
Replacing $C' _2$ in the proof of Theorem \ref{thm:asymp} with
\[
\max \bigl \{ d_{\Gamma}(x_0, y) - d_{P_{\Gamma}, \Phi} (x_0, y) \ \big | \ 
y \in V_{\Gamma},\ \Phi(y) - \Phi(x_0) \in Q \bigr \}. 
\]
\end{itemize}
Then, by the choice of $q_v$, we have $\overline{x_0} \in \operatorname{supp}(q_v)$. 
In particular, for any walk $p$ in $\Gamma$ from $x_0$, we have 
$\operatorname{supp} (\overline{p}) \cap \operatorname{supp}(q_v) \not = \emptyset$. 
Therefore, we can see that the same argument as the proof of Theorem \ref{thm:asymp} works. 
\end{proof}

\subsection{Well-arranged periodic graphs}\label{subsection:WA}
In this subsection, we introduce a class of periodic graphs called ``well-arranged", 
and we prove that their growth sequences satisfy the same reciprocity laws as in Theorem \ref{thm:ET}(3). 

\begin{defi}\label{defi:WA}
Let $(\Gamma , L)$ be a strongly connected $n$-dimensional periodic undirected graph. 
Let $\Phi : V_{\Gamma} \to L_{\mathbb{R}}$ be a periodic realization, 
and let $x_0 \in V_{\Gamma}$. We say that the triple $(\Gamma, \Phi, x_0)$ is \textit{well-arranged} if 
the following condition holds: 
there exist 
\begin{itemize}
\item 
an integer $d_v \in \mathbb{Z}_{>0}$ for each $v \in V(P_{\Gamma})$, and 
\item 
a triangulation $T_{\sigma}$ of $\sigma$ for each $\sigma \in \operatorname{Facet}(P_{\Gamma})$
\end{itemize}
with the following conditions: 
\begin{enumerate}
\item 
$V(\Delta) \subset V(\sigma)$ holds for any $\Delta \in T_\sigma$. 

\item 
$d_v v \in L$. 

\item
For any $\sigma \in \operatorname{Facet}(P_{\Gamma})$, $\Delta \in T_{\sigma}$ and any subset $V' \subset V(\Delta)$, we have  
\[
d_{\Gamma}(x_0, y) + d_{\Gamma}(y, z) = \sum _{v \in V'} d_v
\]
for $z := \left( \sum _{v \in V'} d_v v \right) + x_0$ and for any $y \in V_{\Gamma}$ 
such that $\Phi(y) - \Phi(x_0) \in \sum _{v \in V'} [0,d_v) v$. 
\end{enumerate}
\end{defi}

\begin{rmk}
Applying Definition \ref{defi:WA}(3) to the case $y=x_0$, we have $d_{\Gamma}(x_0, z) = \sum _{v \in V'} d_v$ in Definition \ref{defi:WA}(3). 
Therefore, the equation in (3) says 
``for any $y$, there exists a shortest walk from $x_0$ to $z$ factors through $y$". 
\end{rmk}

\begin{lem}\label{lem:WAPI}
If $(\Gamma, \Phi, x_0)$ is well-arranged, then $x_0$ is $P$-initial. 
\end{lem}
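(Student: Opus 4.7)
My plan is to use the well-arranged condition to produce, for each $u \in V(P_\Gamma)\setminus\{0\}$, a shortest walk from $x_0$ to $d_u u + x_0$; to pass to its image in $\Gamma/L$, which becomes a closed walk at $\overline{x_0}$; to decompose it into cycles via Lemma \ref{lem:bunkai}; and then to exploit the extremality of $u$ in $P_\Gamma$ to force every constituent cycle to have $\nu$-value exactly $u$. The desired $p_u$ is then furnished by the connectivity clause of Lemma \ref{lem:bunkai}(2).

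Concretely, since $\Gamma$ is strongly connected, we have $0 \in \operatorname{int}(P_\Gamma)$, so $u \ne 0$ lies on some facet $\sigma \in \operatorname{Facet}(P_\Gamma)$; being a vertex of $P_\Gamma$ on $\sigma$, $u$ is automatically a vertex of $\sigma$. I would pick any $\Delta \in T_\sigma$ containing $u$; since $V(\Delta) \subset V(\sigma)$ and $u$ is a vertex of $\sigma$, we must have $u \in V(\Delta)$. Applying Definition \ref{defi:WA}(3) with $V' = \{u\}$ and $y := x_0$ (which is valid because $\Phi(x_0) - \Phi(x_0) = 0 \in [0,d_u)u$) yields $d_\Gamma(x_0, z) = d_u$, where $z := d_u u + x_0$. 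Let $p$ be a walk from $x_0$ to $z$ with $w(p) = d_u$; since $z - x_0 = d_u u \in L$, the image $\overline{p}$ is a closed walk at $\overline{x_0}$ in $\Gamma/L$. Next, Lemma \ref{lem:bunkai}(1) decomposes $\overline{p}$ into a walkable sequence $(q_0, q_1, \ldots, q_\ell)$ with $\langle\overline{p}\rangle = \sum_{i=0}^\ell\langle q_i\rangle$, and Remark \ref{rmk:bunkai}(1) forces $q_0 = \emptyset_{\overline{x_0}}$. Combining Remark \ref{rmk:bunkai}(3) with the definition of $\nu$, I obtain
\[
d_u u \;=\; \operatorname{vec}(p) \;=\; \sum_{i=1}^\ell \mu(\langle q_i\rangle) \;=\; \sum_{i=1}^\ell w(q_i)\,\nu(q_i), \qquad d_u \;=\; \sum_{i=1}^\ell w(q_i),
\]
which exhibits $u$ as a convex combination of points $\nu(q_i) \in P_\Gamma$. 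Since $u$ is an extreme point of $P_\Gamma$, every $\nu(q_i)$ must equal $u$. Finally, Lemma \ref{lem:bunkai}(2) applied with $k = 0$ produces an index $j \ge 1$ such that $\operatorname{supp}(q_0) \cap \operatorname{supp}(q_j) \ne \emptyset$, i.e.\ $\overline{x_0} \in \operatorname{supp}(q_j)$; setting $p_u := q_j$ finishes the argument.

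The main conceptual obstacle is bridging the gap between what well-arrangedness delivers — an optimal-weight \emph{walk} from $x_0$ to $d_u u + x_0$ — and what $P$-initiality demands — a \emph{cycle} in $\Gamma/L$ through $\overline{x_0}$ realizing $u$. The two ingredients that close this gap are that the walk automatically closes up in $\Gamma/L$ (permitting a cycle decomposition) and that extremality of $u$ in $P_\Gamma$ prevents any summand of that decomposition from having $\nu$-value different from $u$; once both are in hand, Lemma \ref{lem:bunkai}(2) produces a cycle through $\overline{x_0}$ almost for free.
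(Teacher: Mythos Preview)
Your proof is correct and follows essentially the same route as the paper's own argument: apply Definition \ref{defi:WA}(3) with $V' = \{u\}$ and $y = x_0$ to get a walk of weight $d_u$ from $x_0$ to $d_u u + x_0$, decompose its image in $\Gamma/L$ into cycles via Lemma \ref{lem:bunkai}, use extremality of $u$ to force each $\nu(q_i) = u$, and pick a cycle through $\overline{x_0}$. If anything, you are more explicit than the paper in justifying why $u \in V(\Delta)$ for some $\Delta \in T_\sigma$ and in invoking Lemma \ref{lem:bunkai}(2) to locate the cycle containing $\overline{x_0}$.
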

\begin{proof}
Let $v \in V(P_{\Gamma})$. 
Take $\sigma \in \operatorname{Facet}(P_{\Gamma})$ and $\Delta \in T_{\sigma}$ satisfying $v \in V(\Delta)$. 
Then, by Definition \ref{defi:WA}(3) for $V' = \{ v \}$, we have $d_{\Gamma}(x_0, x_0 + d_v v) = d_v$. 
Therefore, there exists a walk $p$ in $\Gamma$ from $x_0$ such that $w(p) = d_v$ and 
$t(p) = d_v v + s(p)$. 
By Lemma \ref{lem:bunkai}, $\overline{p}$ decomposes to a walkable sequence $(q_0, q_1, \ldots , q_{\ell})$ with $q_0 = \emptyset _{\overline{x_0}}$ such that 
$\langle \overline{p} \rangle = \sum _{i = 1} ^{\ell} \langle q_i \rangle$. 
Note that we have 
$\frac{\mu(\langle \overline{p} \rangle)}{w(p)} = v \in V(P_{\Gamma})$ and 
$\frac{\mu(\langle q_i \rangle)}{w(q_i)} = \nu(q_i) \in P_{\Gamma}$. 
Therefore, we have $\nu(q_i) = v$ for each $1 \le i \le \ell$. 
Since $\overline{x_0} \in \operatorname{supp}(q_i)$ for some $i$, we conclude that $x_0$ is $P$-initial. 
\end{proof}

Proposition \ref{prop:refWA} below shows that the graphs treated in Theorem \ref{thm:ET}(3)(i) are well-arranged. 
Therefore, Theorem \ref{thm:WA} below can be seen as the generalization of 
Theorem \ref{thm:ET}(3) for undirected periodic graphs. 

\begin{prop}\label{prop:refWA}
Let $(\Gamma , L)$ be a strongly connected $n$-dimensional periodic undirected graph. 
Let $\Phi : V_{\Gamma} \to L_{\mathbb{R}}$ be a periodic realization, and let $x_0 \in V_{\Gamma}$. 
Suppose that $C_1(\Gamma, \Phi, x_0) < \frac{1}{2}$ and $C_2(\Gamma, \Phi, x_0) < \frac{1}{2}$. 
Then $(\Gamma, \Phi, x_0)$ is well-arranged. 
\end{prop}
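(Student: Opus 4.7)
The plan is to exhibit the required data $(d_v, T_\sigma)$ and then verify condition (3) of Definition~\ref{defi:WA} by sandwiching $d_\Gamma(x_0,y)+d_\Gamma(y,z)$ between $\sum d_v$ and $\sum d_v+1$, and using integrality.

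For the data, I take $d_v$ to be the smallest positive integer with $d_v v\in L$ (which exists because $P_\Gamma$ is rational), and I choose any triangulation $T_\sigma$ of each facet $\sigma$ whose vertex set is contained in $V(\sigma)$ (the existence of such a triangulation is classical). Condition (2) holds by construction, and condition (1) is built into the choice of triangulation. Before attacking (3), I observe two preliminary facts: first, since $C_1<\tfrac12$ and $C_2<\tfrac12$ we have $1/2\in[C_1,1-C_2)$, so Theorem~\ref{thm:ET}(2) implies $(\Gamma,\Phi,x_0)$ is $\tfrac12$-Ehrhart, i.e.\
\[
d_{P_\Gamma,\Phi}(x_0,y)-\tfrac12\le d_\Gamma(x_0,y)<d_{P_\Gamma,\Phi}(x_0,y)+\tfrac12
\]
for every $y\in V_\Gamma$; second, since $\Gamma$ is undirected every cycle has a reverse, so $\operatorname{Im}(\nu)$ is symmetric under negation, hence $P_\Gamma=-P_\Gamma$ and $d_{P_\Gamma}$ is even.

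Now fix $\sigma\in\operatorname{Facet}(P_\Gamma)$, $\Delta\in T_\sigma$, $V'\subset V(\Delta)$, and $y\in V_\Gamma$ with $\Phi(y)-\Phi(x_0)=\sum_{v\in V'}\lambda_v d_v v$ for some $\lambda_v\in[0,1)$, and set $z=\bigl(\sum_{v\in V'}d_v v\bigr)+x_0$. Since $0\in\operatorname{int}(P_\Gamma)$ and $\sigma$ is a facet, $d_{P_\Gamma}$ is linear on the cone $\mathbb{R}_{\ge0}\sigma$ with value $1$ on each vertex of $\sigma$; as $V'\subset V(\Delta)\subset V(\sigma)$, this gives $d_{P_\Gamma,\Phi}(x_0,y)=\sum_{v\in V'}\lambda_v d_v$. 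For $d_\Gamma(y,z)$ I use undirectedness and translation by $z-x_0\in L$: putting $y':=y-(z-x_0)$, we have $d_\Gamma(y,z)=d_\Gamma(z,y)=d_\Gamma(x_0,y')$, and $\Phi(y')-\Phi(x_0)=-\sum_{v\in V'}(1-\lambda_v)d_v v$, which by $P_\Gamma=-P_\Gamma$ yields $d_{P_\Gamma,\Phi}(x_0,y')=\sum_{v\in V'}(1-\lambda_v)d_v$. Applying the $\tfrac12$-Ehrhart bound to both $y$ and $y'$ and summing,
\[
\textstyle\sum_{v\in V'}d_v-1\le d_\Gamma(x_0,y)+d_\Gamma(y,z)<\sum_{v\in V'}d_v+1.
\]

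To pin down the value, combine the triangle inequality with Lemma~\ref{lem:dPd}: $d_\Gamma(x_0,y)+d_\Gamma(y,z)\ge d_\Gamma(x_0,z)\ge d_{P_\Gamma}(z-x_0)=\sum_{v\in V'}d_v$. Together with the strict upper bound above and the fact that weights, hence distances, are integer-valued, this forces $d_\Gamma(x_0,y)+d_\Gamma(y,z)=\sum_{v\in V'}d_v$, which is exactly condition (3). There is no substantial obstacle beyond being careful with the two items where undirectedness is used (to obtain $P_\Gamma=-P_\Gamma$, and to rewrite $d_\Gamma(y,z)$ as a distance from $x_0$); once these are in hand the rest is bookkeeping with the gauge function and the $\tfrac12$-Ehrhart inequalities.
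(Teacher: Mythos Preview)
Your proof is correct and follows essentially the same route as the paper: both invoke Theorem~\ref{thm:ET}(2) to obtain the $\tfrac12$-Ehrhart bounds, use undirectedness together with $L$-translation to rewrite $d_\Gamma(y,z)$ as a distance from (a translate of) $x_0$, and exploit linearity of $d_{P_\Gamma}$ on the cone over the facet $\sigma$. The only cosmetic difference is the endgame: the paper extracts the exact formulas $d_\Gamma=\lceil d_{P_\Gamma,\Phi}-\tfrac12\rceil=\lfloor d_{P_\Gamma,\Phi}+\tfrac12\rfloor$ (using $\alpha$-Ehrhart for $\alpha$ on both sides of $\tfrac12$) and adds them directly, whereas you bound the sum in $[\sum d_v-1,\sum d_v+1)$ and then use the triangle inequality with Lemma~\ref{lem:dPd} to pin down the value.
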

\begin{proof}
Since $\Gamma$ is undirected, we have $d_{\Gamma}(y_1,y_2) = d_{\Gamma}(y_2,y_1)$ for any $y_1, y_2 \in V_{\Gamma}$. 
By Theorem \ref{thm:ET}(2), we have both
\[
d_{\Gamma}(y_1, y_2) 
= \left \lceil d_{P_{\Gamma}, \Phi}(y_1, y_2) - \frac{1}{2} \right \rceil, \ \ 
d_{\Gamma}(y_1, y_2) 
= \left \lfloor d_{P_{\Gamma}, \Phi}(y_1, y_2) + \frac{1}{2} \right \rfloor
\tag{$\diamondsuit$}
\]
for any $y_1, y_2 \in V_{\Gamma}$ satisfying $\overline{y_1} = \overline{x_0}$. 

For each $v \in V(P_{\Gamma})$, take $d_v \in \mathbb{Z}_{>0}$ satisfying Definition \ref{defi:WA}(2). 
For each $\sigma \in \operatorname{Facet}(P_{\Gamma})$, take a triangulation $T_{\sigma}$ of $\sigma$ satisfying Definition \ref{defi:WA}(1). 
We prove that Definition \ref{defi:WA}(3) is satisfied for any choice of such $d_v$'s and $T_{\sigma}$'s. 

Let $\sigma \in \operatorname{Facet}(P_{\Gamma})$, $\Delta \in T_{\sigma}$, and $V' \subset V(\Delta)$. Take $y \in V_{\Gamma}$ such that $\Phi(y) - \Phi(x_0) \in \sum _{v \in V'} [0,d_v) v$. 
Then, by the condition on $y$, we have 
\[
d_{P_{\Gamma}, \Phi}(x_0, y) + d_{P_{\Gamma}, \Phi}(y, z) 
= \sum _{v \in V'} d_v \in \mathbb{Z}_{\ge 0}. 
\]
Then, the desired equality $d_{\Gamma}(x_0, y) + d_{\Gamma}(y,z) = \sum _{v \in V'} d_v$ 
follows from ($\diamondsuit$). 
\end{proof}

\begin{thm}\label{thm:WA}
Let $(\Gamma , L)$ be a strongly connected $n$-dimensional periodic undirected graph. 
Let $\Phi : V_{\Gamma} \to L_{\mathbb{R}}$ be a periodic realization, and let $x_0 \in V_{\Gamma}$. 
Suppose that $(\Gamma, \Phi, x_0)$ is well-arranged. 
Then, the following assertions hold. 
\begin{enumerate}
\item 
The function $i \mapsto s_{\Gamma, x_0, i}$ obtained 
by the growth sequence is a quasi-polynomial on $i \ge 1$. 
The function $i \mapsto b_{\Gamma, x_0, i}$  is a quasi-polynomial on $i \ge 0$. 

\item
More precisely, the generating function $G_s(t) := \sum _{i \ge 0} s_{\Gamma, x_0, i} t^i$ of 
the growth sequence $(s_{\Gamma, x_0, i})_i$ can be expressed as
\[
G_s(t) = \frac{Q_1(t)}{Q_2(t)}
\]
with polynomials $Q_1$ and $Q_2$ satisfying the two conditions:
\begin{itemize}
\item
$Q_2(t) = \operatorname{LCM} \left \{ \prod _{v \in V(\Delta)} (1 - t^{d_v}) 
\ \middle | \ \sigma \in \operatorname{Facet}(P),\ \Delta \in T_{\sigma} \right \}$. 

\item 
$\deg Q_1 \le \deg Q_2$. 
\end{itemize}

\item
The same reciprocity as in Theorem \ref{thm:ET}(3) is satisfied. 
\end{enumerate}
\end{thm}

\begin{proof}
The assertion (1) follows from (2) since the generating function corresponding to the function $i \mapsto b_{\Gamma, x_0, i}$ is equal to $\frac{Q_1(t)}{(1-t)Q_2(t)}$. 
For the reciprocity in Theorem \ref{thm:ET}(3), it is sufficient to show the formula 
$G_s(1/t) = (-1)^n G_s(t)$ because this formula implies the other three formulas. 
For a subset $E \subset L_{\mathbb{R}}$, we define 
\[
G_E(t) := \sum _{y \in V_{\Gamma} \cap \Phi ^{-1}(E)} t^{d_{\Gamma}(x_0, y)}.  
\]
Using this notation, $G_s(t)$ in Theorem \ref{thm:ET}(3) can be expressed as 
$G_s(t) = G_{L_{\mathbb{R}}} (t)$. 

Take $d_v$'s for $v \in V(P_{\Gamma})$ and a triangulation $T_{\sigma}$ of $\sigma$ for each 
$\sigma \in \operatorname{Facet}(P_{\Gamma})$ as in Definition \ref{defi:WA}. 
We set 
\[
T := \{ \Delta' \mid \sigma \in \operatorname{Facet}(P_{\Gamma}), 
\Delta \in T_{\sigma}, \Delta ' \in \operatorname{Face}(\Delta) \}
\]

For $\Delta ' \in T$, 
we define subsets $D^+ _{\Delta '}, D^- _{\Delta '}\subset L_{\mathbb{R}}$ by
\[
D^+ _{\Delta '} := \sum _{v \in V(\Delta ')} [0, \infty) v, \qquad 
D^- _{\Delta '} := \sum _{v \in V(\Delta ')} (- \infty, 0) v. 
\]
Then, we have both
\begin{align*}
G_{L_{\mathbb{R}}}(t) \tag{$\spadesuit$}
&= G_{\{ 0 \}}(t) + \sum_{\Delta ' \in T} G_{D^{-}_{\Delta '}}(t) 
= 1 + \sum_{\Delta ' \in T} G_{D^{-}_{\Delta '}}(t). \\ 
G_{L_{\mathbb{R}}}(t) 
&= (-1)^n \left( G_{\{ 0 \}}(t) 
+ \sum_{\Delta ' \in T} (-1)^{\dim \Delta '} G_{D^{+}_{\Delta '}}(t) \right)\\
&= (-1)^n \left( 1 + \sum_{\Delta ' \in T} (-1)^{\dim \Delta '} G_{D^{+}_{\Delta '}}(t) \right). 
\end{align*}
For $\Delta ' \in T$, we define 
\[
\diamondsuit^+ _{\Delta '} := \sum _{v \in V(\Delta ')} [0, d_v) v, \qquad 
\diamondsuit^- _{\Delta '} := \sum _{v \in V(\Delta ')} [-d_v, 0) v. 
\]
Then the following assertions hold. 
\begin{claim}\label{claim:diamond}
\begin{enumerate}
\item 
For any $y \in V_{\Gamma} \cap \diamondsuit^+ _{\Delta '}$
and for $a_v \in \mathbb{Z}_{\ge 0}$, 
we have 
\[
d_{\Gamma} \left( x_0, y + \sum _{v \in V(\Delta ')} a_v d_v v \right) 
= d_{\Gamma} (x_0, y) + \sum _{v \in V(\Delta ')} a_v d_v. 
\]

\item 
For any $y \in V_{\Gamma} \cap \diamondsuit^- _{\Delta '}$
and for $a_v \in \mathbb{Z}_{\ge 0}$, 
we have 
\[
d_{\Gamma} \left( x_0, y - \sum _{v \in V(\Delta ')} a_v d_v v \right) 
= d_{\Gamma} (x_0, y) + \sum _{v \in V(\Delta ')} a_v d_v. 
\]

\item
For any $y \in V_{\Gamma} \cap \diamondsuit^+ _{\Delta '}$, 
we have
\[
d_{\Gamma} (x_0, y) + d_{\Gamma} \left( x_0, y - \sum _{v \in V(\Delta ')} d_v v \right)
= \sum _{v \in V(\Delta ')} d_v.
\]

\item We have
\[
G_{D^+ _{\Delta '}}(t)
= \frac{G_{\diamondsuit ^+ _{\Delta '}}(t)}{\prod _{v \in V(\Delta ')} (1 - t^{d_v})}, \quad
G_{D^- _{\Delta '}}(t)
= \frac{G_{\diamondsuit ^- _{\Delta '}}(t)}{\prod _{v \in V(\Delta ')} (1 - t^{d_v})}. 
\]

\item
We have $t^{d_{\Delta '}} G_{\diamondsuit ^- _{\Delta '}}(1/t) 
= G_{\diamondsuit ^+ _{\Delta '}}(1/t)$, where 
$d_{\Delta '} := \sum _{v \in V(\Delta ')} d_v$.

\item
We have 
$G_{D^- _{\Delta '}}(1/t) = (-1)^{\dim \Delta ' +1}G_{D^+ _{\Delta '}}(t)$. 

\item
We have 
$\deg G_{\diamondsuit ^+ _{\Delta '}}(t) \le \sum _{v \in V(\Delta ')} d_v$. 
\end{enumerate}
\end{claim}
\begin{proof}
We prove (1). 
We set 
\begin{align*}
z &:= x_0 + \sum _{v \in V(\Delta ')} d_v v, \\
y'&:= y + \sum _{v \in V(\Delta ')} a_v d_v v, \\
z'&:= z + \sum _{v \in V(\Delta ')} a_v d_v v. 
\end{align*}
For each $v \in V(P_{\Gamma})$, by the proof of Lemma \ref{lem:WAPI},  
we can take a closed walk $q_v$ in $\Gamma /L$ such that 
\[
w(q_v) = d_v, \quad 
\mu(\langle q_v \rangle) = d_v v, \quad
x_0 \in \operatorname{supp}(q_v). 
\]
Let $p$ be a walk in $\Gamma$ from $x_0$ to $y$ with $w(p) = d_{\Gamma}(x_0, y)$. 
Since $\operatorname{supp}{\overline{p}} \cap \operatorname{supp}(q_v) \not = \emptyset$, 
there exists a walk $p'$ in $\Gamma$ from $x_0$ to $y'$ such that 
$w(p') = w(p) + \sum _{v \in V(\Delta ')} a_v d_v$. 
Therefore, we have
\[
d_{\Gamma} (x_0, y') 
\le d_{\Gamma} (x_0, y) + \sum _{v \in V(\Delta ')} a_v d_v. \tag{i}
\]
On the other hand, we have 
\[
d_{\Gamma} (x_0, z') \le d_{\Gamma} (x_0, y') + d_{\Gamma} (y', z'). 
\]
Here, we have 
\[
d_{\Gamma} (x_0, z') = \sum _{v \in V(\Delta ')} (a_v +1) d_v
\]
by Lemma \ref{lem:dPd} and the inequality (i) for $y = x_0$. 
Furthermore, we have 
\[
d_{\Gamma} (y', z') = d_{\Gamma} (y, z) = -d_{\Gamma}(x_0, y) + \sum _{v \in V(\Delta ')} d_v
\]
by Definition \ref{defi:WA}(3). 
Therefore, we get the opposite inequality of (i). 
We complete the proof of the assertion (1). 
The assertion (2) is proved by the similar way. 

By translation, we have 
\[
d_{\Gamma} \left( x_0, y - \sum _{v \in V(\Delta ')} d_v v \right)
= d_{\Gamma} (z, y). 
\]
Therefore, the assertion (3) follows from Definition \ref{defi:WA}(3). 

The assertion (4) follows from (1) and (2). 
The assertion (5) follows from (3). 
The assertion (6) follows from (4) and (5). 

By the inequality (i), we have $d_{\Gamma}(x_0, y) \le \sum _{v \in V(\Delta ')} d_v$ for 
any $y \in V_{\Gamma} \cap \diamondsuit^+ _{\Delta '}$. 
Therefore, we conclude the assertion (7). 
\end{proof}
By ($\spadesuit$) and Claim \ref{claim:diamond}(4)(7),
we conclude that $G_{L_{\mathbb{R}}}(t)$ is a rational function of the form 
$\frac{Q_1(t)}{Q_2(t)}$ satisfying the two conditions in (2). 
By ($\spadesuit$) and Claim \ref{claim:diamond}(6), 
we have the reciprocity $G_{L_{\mathbb{R}}}(1/t) = (-1)^n G_{L_{\mathbb{R}}}(t)$. 
We complete the proof. 
\end{proof}

\begin{rmk}\label{rmk:compute}
When a periodic graph is well-arranged for some realization, 
we can conclude that the growth sequence $(s_{\Gamma, x_0, i})_i$ is a quasi-polynomial on $d \ge 1$ (Theorem \ref{thm:WA}(1)), and we can find a quasi-period (Theorem \ref{thm:WA}(2)). 
Therefore, we can determine the growth sequence from its first few terms. 
In Section \ref{section:eg}, by using this method, we determine the growth series for several new examples. 
\end{rmk}

\section{Examples}\label{section:eg}
In this section, for some specific periodic graphs, we will see whether they are well-arranged or not. 
Furthermore, we determine the growth series in several new examples by the method explained in Remark \ref{rmk:compute}. 

\begin{itemize}
\item
In Subsection \ref{subsection:GSS}, we will examine seven tilings in \cite{GS19}. 
For these examples, the growth sequences have already been determined by Goodman-Strauss and Sloane in \cite{GS19}. 
However, we expect that this subsection will help the reader to understand the concepts ``well-arranged'' and ``$P$-initial''.

\item
In Subsection \ref{subsection:eg_2dim}, we treat another tiling called the $(3^6;3^2.4.3.4;3^2.4.3.4)$ 3-uniform tiling. 
We check that the corresponding periodic graph is well-arranged under a suitable realization. 
Furthermore, we determine its growth series. 
As far as we know, this is the first time that its growth series have been determined with a proof.

\item
In Subsection \ref{subsection:eg_3dim}, we treat $3$-dimensional periodic graphs obtained by carbon allotropes. 
We confirm that $22$ of them are well-arranged, and we determine their growth series. 
As far as we know, this is the first time that the growth sequences have been computed for nontrivial $3$-dimensional periodic graphs. 
\end{itemize}

\begin{rmk}\label{rmk:impl}
It is not difficult to implement the following calculations and verifications in a computer program. 
\begin{enumerate}
\item 
Calculate the first few terms of the growth sequence $(s_{\Gamma, x_0, i})_i$ 
 (breadth-first search algorithm).
 
\item 
Check that $x_0$ is $P$-initial or not. 

\item
Calculate $C_1(\Gamma, \Phi, x_0)$ (Remark \ref{rmk:C1}(1)). 

\item
Calculate $C_2(\Gamma, \Phi, x_0)$ when $x_0$ is $P$-initial (Proposition \ref{prop:C2}).
 
\item
Check that $(\Gamma, \Phi, x_0)$ is well-arranged or not for given $d_v$'s and triangulations $T_{\sigma}$'s. 

\item
Determine the growth series for well-arranged periodic graphs (Remark \ref{rmk:compute}). 
\end{enumerate}

We prepare implementations of the algorithms in {\sf Python} to compute or check (1)--(6) above for unweighted periodic graphs. 
For details, see the source code:
\begin{center}
\url{https://github.com/yokozuna57/Ehrhart_on_PG}
\end{center}
\end{rmk}

\subsection{2-dimensional periodic graphs from \cite{GS19}}\label{subsection:GSS}
In this subsection, we examine seven specific periodic tilings from \cite{GS19} illustrated in Figures \ref{fig:cairo}-\ref{fig:snub632}. 
Let $(\Gamma, L)$ be the corresponding unweighted undirected periodic graphs, 
and let $\Phi$ be the periodic realizations shown in the figures. 
The parallelogram drawn with red lines represents a fundamental region of the periodic graph $(\Gamma, L)$. 
In \cite{GS19}, Goodman-Strauss and Sloane determine their growth sequences. 
We list their generating functions in the item ``Growth series'' of Table \ref{table:tilings}. 
With the help of a computer program (cf.\ Remark \ref{rmk:impl}), we can check whether these tilings (and their starting points) are $P$-initial and whether they are well-arranged as in Table \ref{table:tilings}. 
In the table, ``PI'' stands for $P$-initial, ``WA'' for well-arranged, and ``RL'' for reciprocity law.

\begin{figure}[h]
\begin{minipage}[t]{0.49\hsize}
\centering
\includegraphics[height=\zua]{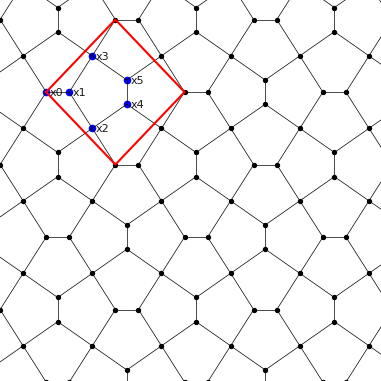}
\end{minipage}
\begin{minipage}[t]{0.49\hsize}
\centering
\includegraphics[height=\zua]{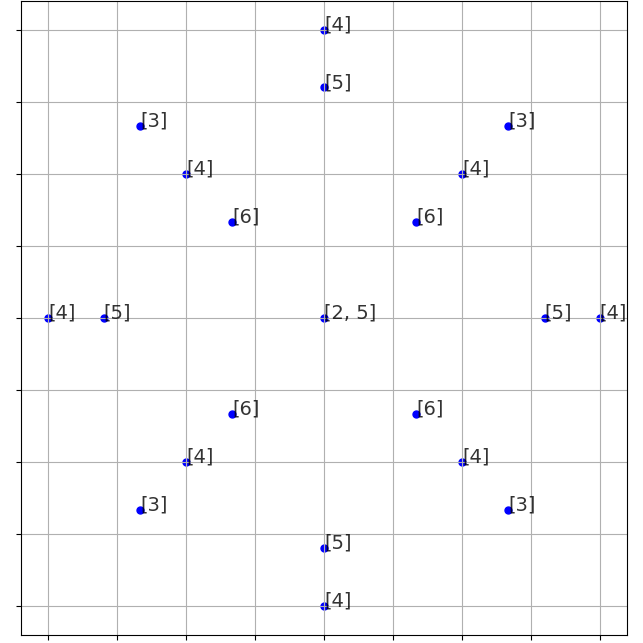}
\end{minipage}

\vspace{\zub}

\caption{The Cairo tiling and its $\operatorname{Im}(\nu)$. }
\label{fig:cairo}
\end{figure}

\vspace{\zuc}

\begin{figure}[h]
\begin{minipage}[t]{0.49\hsize}
\centering
\includegraphics[height=\zua]{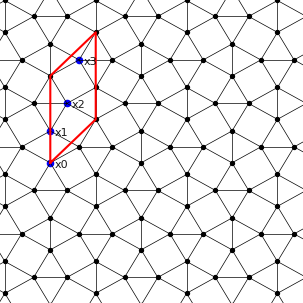}
\end{minipage}
\begin{minipage}[t]{0.49\hsize}
\centering
\includegraphics[height=\zua]{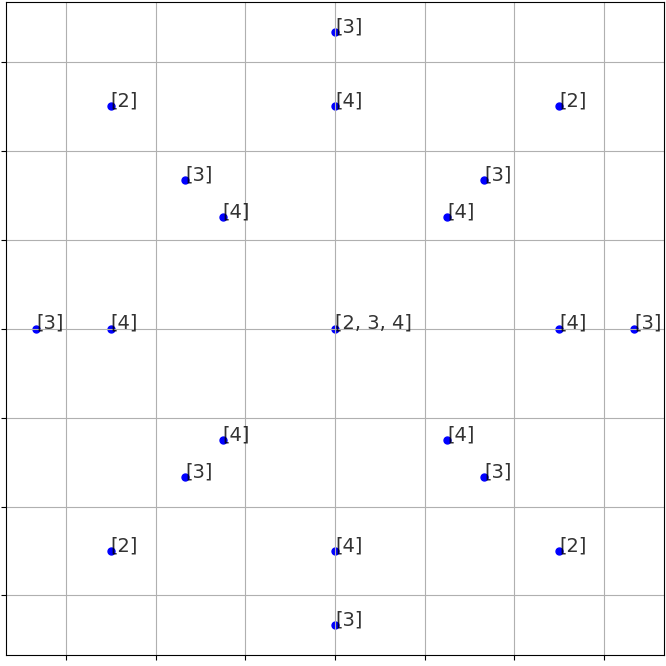}
\end{minipage}

\vspace{\zub}

\caption{The $3^2.4.3.4$ uniform tiling and its $\operatorname{Im}(\nu)$.}
\label{fig:32434}
\end{figure}

\vspace{\zuc}

\begin{figure}[h]
\begin{minipage}[t]{0.49\hsize}
\centering
\includegraphics[height=\zua]{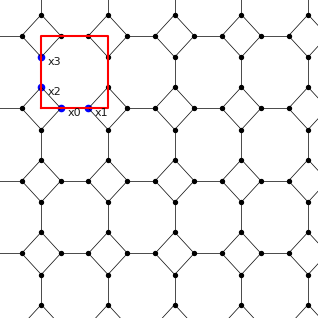}
\end{minipage}
\begin{minipage}[t]{0.49\hsize}
\centering
\includegraphics[height=\zua]{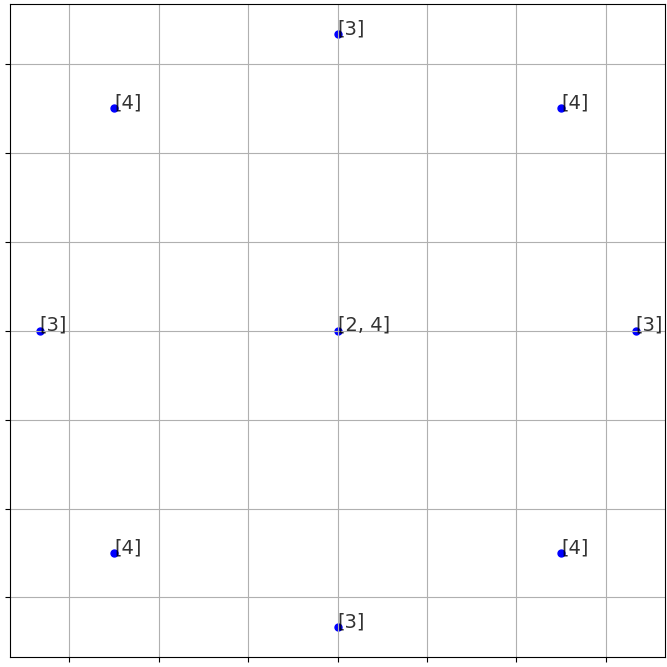}
\end{minipage}

\vspace{\zub}

\caption{The $4.8^2$ uniform tiling and its $\operatorname{Im}(\nu)$.}
\label{fig:482}
\end{figure}

\vspace{\zuc}

\begin{figure}[h]
\begin{minipage}[t]{0.49\hsize}
\centering
\includegraphics[height=\zua]{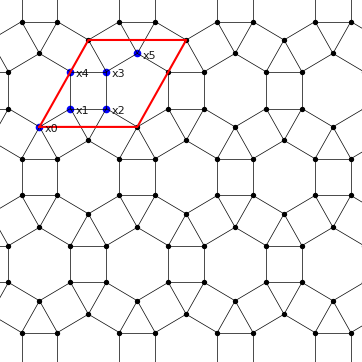}
\end{minipage}
\begin{minipage}[t]{0.49\hsize}
\centering
\includegraphics[height=\zua]{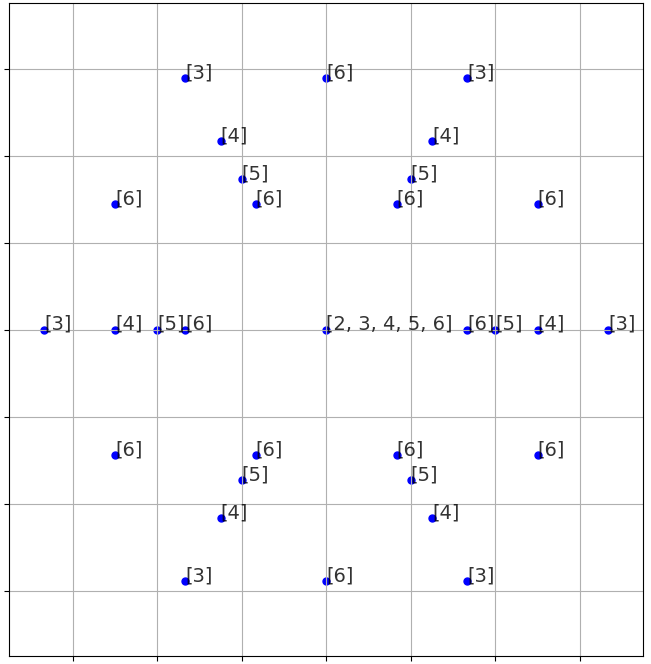}
\end{minipage}

\vspace{\zub}

\caption{The $3.4.6.4$ uniform tiling and its $\operatorname{Im}(\nu)$.}
\label{fig:3464}
\end{figure}

\vspace{\zuc}

\begin{figure}[h]
\begin{minipage}[t]{0.49\hsize}
\centering
\includegraphics[height=\zua]{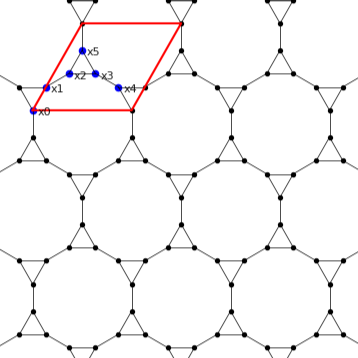}
\end{minipage}
\begin{minipage}[t]{0.49\hsize}
\centering
\includegraphics[height=\zua]{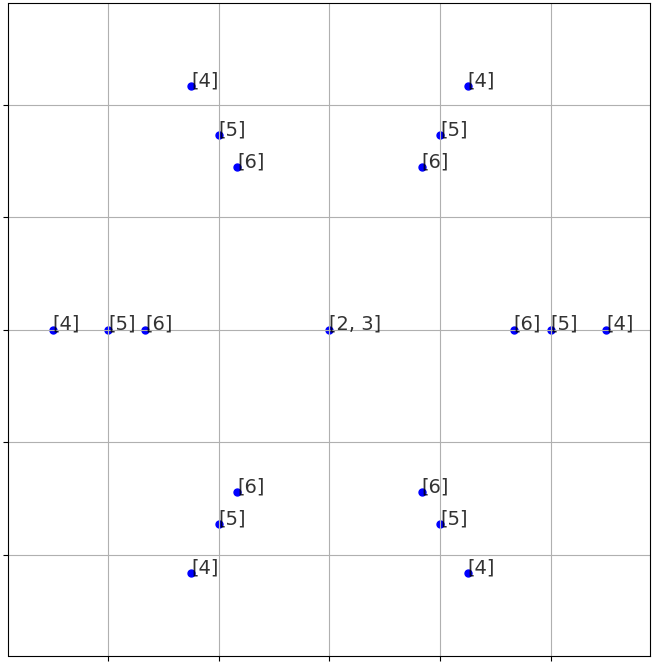}
\end{minipage}

\vspace{\zub}

\caption{The $3.12^2$ uniform tiling and its $\operatorname{Im}(\nu)$.}
\label{fig:3122}
\end{figure}

\vspace{\zuc}

\begin{figure}[h]
\begin{minipage}[t]{0.49\hsize}
\centering
\includegraphics[height=\zua]{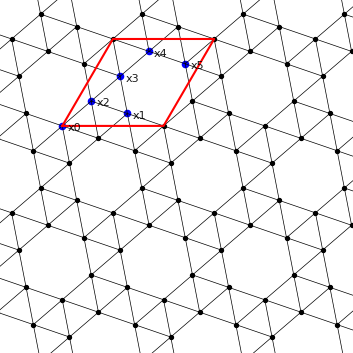}
\end{minipage}
\begin{minipage}[t]{0.49\hsize}
\centering
\includegraphics[height=\zua]{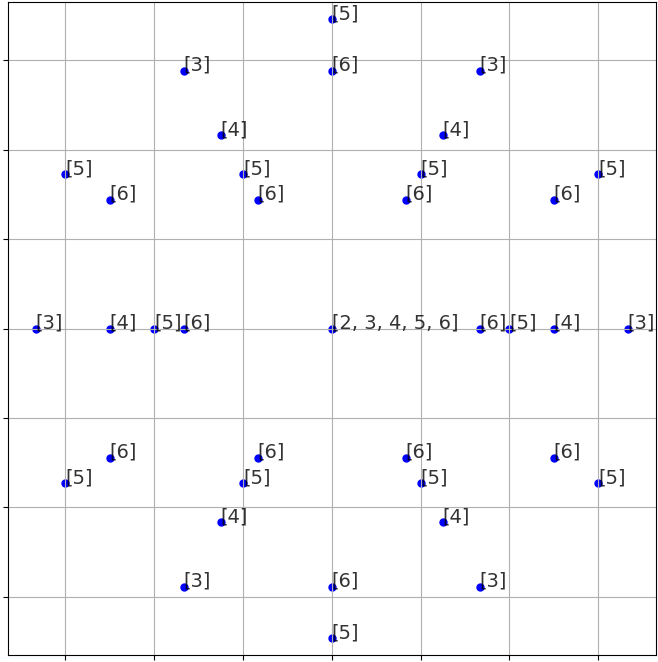}
\end{minipage}

\vspace{\zub}

\caption{The $3^4.6$ uniform tiling and its $\operatorname{Im}(\nu)$.}
\label{fig:346}
\end{figure}

\vspace{\zuc}

\begin{figure}[h]
\begin{minipage}[t]{0.49\hsize}
\centering
\includegraphics[height=\zua]{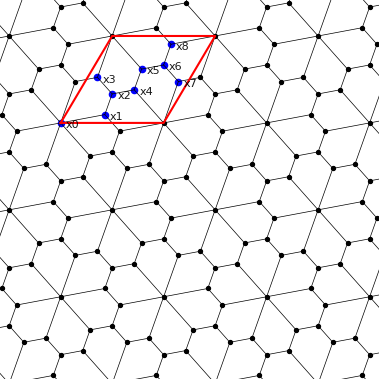}
\end{minipage}
\begin{minipage}[t]{0.49\hsize}
\centering
\includegraphics[height=\zua]{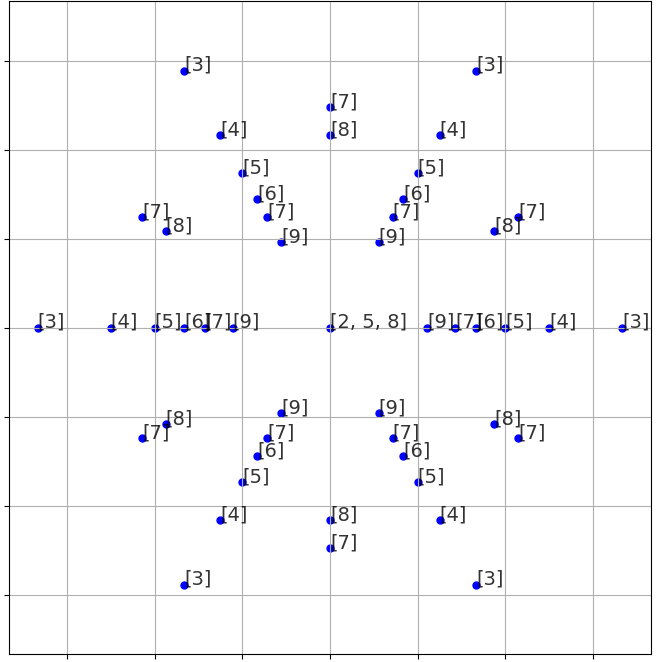}
\end{minipage}

\vspace{\zub}

\caption{The snub-$632$ 3-uniform tiling and its $\operatorname{Im}(\nu)$.}
\label{fig:snub632}
\end{figure}

\renewcommand{\arraystretch}{1.5}
\begin{table}[h]

\vspace{5mm}

\centering
\begin{tabular}{l||cccccc} \hline
\multirow{2}{*}{Tiling / start pt.} & $\#(V_{\Gamma /L})$ & PI  & WA & RL &  $C_1$ & $C_2$   \\
& \multicolumn{6}{c}{Growth series} \\
\hline \hline
\multirow{2}{*}{Fig.\ \ref{fig:cairo} / $x_2, x_3$} & 6 & YES & YES & YES & 1/3 & 1/3 \\
& \multicolumn{6}{c}{\Large $\frac{1+ 2t + t^2}{(1 - t)^2}$} \\
\hline
\multirow{2}{*}{Fig.\ \ref{fig:cairo} / otherwise} & 6 & NO & NO & NO & 2/3 & $\ge 1$ \\
& \multicolumn{6}{c}{\Large $\frac{1+2 t+5 t^{2}+4 t^{3}+2 t^{4}+3 t^{5}-t^{7}}{(1 - t)(1 - t^4)}$} \\
\hline
\multirow{2}{*}{Fig.\ \ref{fig:32434} / all} & 4 & YES & YES & YES & 0.36... & 0.36...  \\
& \multicolumn{6}{c}{\Large $\frac{1+4 t+6 t^{2}+4 t^{3}+t^{4}}{(1-t)(1-t^3)}$} \\
\hline
\multirow{2}{*}{Fig.\ \ref{fig:482} / all} & 4 & YES & YES & YES & 0.58... & 0.58...  \\
& \multicolumn{6}{c}{\Large $\frac{1+2t+2t^{2}+2t^{3}+t^{4}}{(1-t)(1-t^3)}$} \\
\hline
\multirow{2}{*}{Fig.\ \ref{fig:3464} / all} & 6 & YES & YES & YES & 0.46... & 0.46...  \\
& \multicolumn{6}{c}{\Large $\frac{1 + 2t + t^2}{(1-t)^2}$} \\
\hline
\multirow{2}{*}{Fig.\ \ref{fig:3122} / all} & 6 & NO & NO & NO & 0.38... & $\ge 1$  \\
& \multicolumn{6}{c}{\Large $\frac{1+3 t+4 t^{2}+6 t^{3}+6 t^{4}+6 t^{5}+6 t^{6}+3 t^{7}+3 t^{8}-2 t^{10}}{(1-t^4)^2}$} \\
\hline
\multirow{2}{*}{Fig.\ \ref{fig:346} / all} & 6 & YES & YES & YES & 5/7 & 5/7  \\
& \multicolumn{6}{c}{\Large $\frac{1+4 t +4 t^{2}+6 t^{3}+4 t^{4}+4 t^{5}+t^{6}}{(1-t)(1-t^5)}$} \\
\hline
\multirow{2}{*}{Fig.\ \ref{fig:snub632} / $x_0$} & 9 & YES & NO & NO & 3/7 & 1  \\
& \multicolumn{6}{c}{\Large $\frac{1+6 t+12 t^{2}+10 t^{3}+12 t^{4}+12 t^{5}+t^{6}}{(1 - t^3)^2}$} \\
\hline
\multirow{2}{*}{Fig.\ \ref{fig:snub632} / $x_2, x_6$} & 9 & NO & NO & NO & 3/7 & $\ge 1$  \\
& \multicolumn{6}{c}{\Large $\frac{1+3 t+6 t^{2}+13 t^{3}+15 t^{4}+6 t^{5}+4 t^{6}+9 t^{7}-3 t^{10}}{(1- t^3)^2}$} \\
\hline
\multirow{2}{*}{Fig.\ \ref{fig:snub632} / otherwise} & 9 & NO & NO & NO & 6/7 & $\ge 1$  \\
& \multicolumn{6}{c}{\Large $\frac{1+3 t+9 t^{2}+13 t^{3}+12 t^{4}+9 t^{5}+8 t^{6}+4 t^{7}-t^{8}-2 t^{9}-2 t^{10}}{(1 -t^3)^2}$} \\ 
\hline
\end{tabular}

\vspace{3mm}

\caption{Growth series of the seven tilings in \cite{GS19}.}
\label{table:tilings}
\end{table}
\renewcommand{\arraystretch}{1}

\newpage

\subsection{The $(3^6;3^2.4.3.4;3^2.4.3.4)$ 3-uniform tiling}\label{subsection:eg_2dim}
The $(3^6;3^2.4.3.4;3^2.4.3.4)$ 3-uniform tiling is a two-dimensional tiling illustrated in Figure \ref{fig:t363243432434}. 
Let $(\Gamma, L)$ be the corresponding unweighted undirected periodic graph, 
and let $\Phi$ be the periodic realization shown in the figure. 
The parallelogram drawn with red lines represents a fundamental region of the periodic graph $(\Gamma, L)$. 
The vertices $x_1, x_2, x_3, x_5, x_{10}$ and $x_{11}$ are symmetric with respect to $\operatorname{Aut} \Gamma$. 
The vertices $x_4, x_6, x_7, x_8, x_9$ and $x_{12}$ are also symmetric. 

\begin{figure}[h]
\begin{minipage}[t]{0.49\hsize}
\centering
\includegraphics[height=\zua]{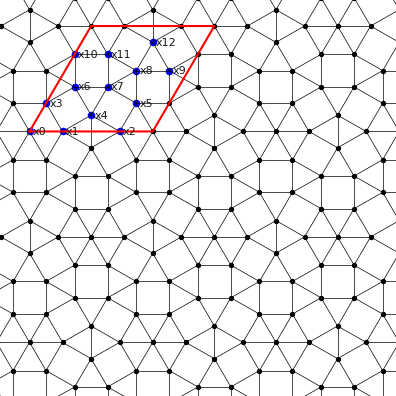}
\end{minipage}
\begin{minipage}[t]{0.49\hsize}
\centering
\includegraphics[height=\zua]{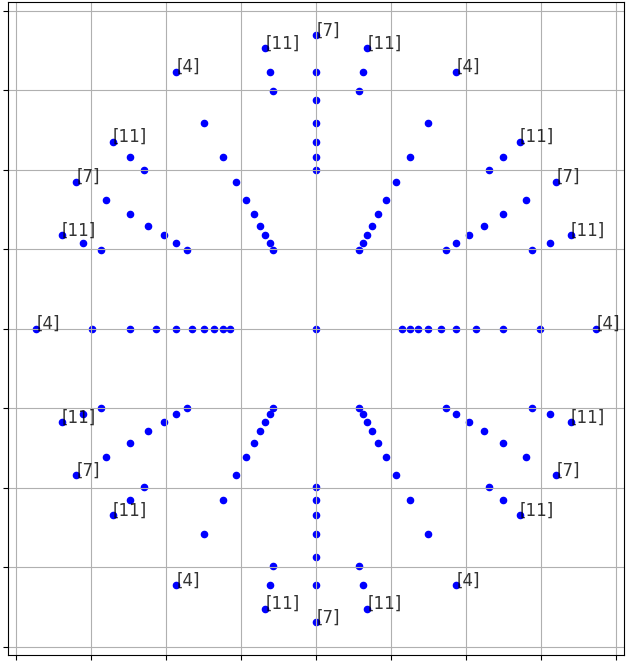}
\end{minipage}

\vspace{\zub}

\caption{The $(3^6;3^2.4.3.4;3^2.4.3.4)$ 3-uniform tiling and its $\operatorname{Im}(\nu)$.}
\label{fig:t363243432434}
%\end{figure}
%
\vspace{6mm}
%
%\begin{figure}[h]
\centering
\includegraphics[width=\zua]{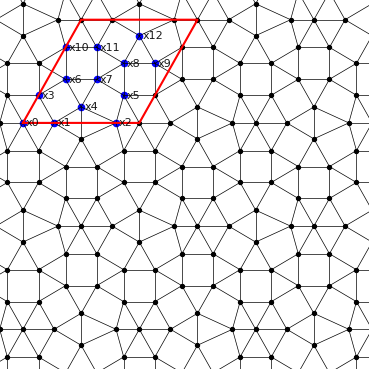}

\vspace{\zub}

\caption{New realization $\Phi '$ obtained by moving the vertex $x_2$ to the right.}
\label{fig:t363243432434_2}
\end{figure}

With the help of a computer program, we can see that all vertices are $P$-initial, and we have
\begin{align*}
C_1(\Gamma, \Phi, x_0) &= C_2(\Gamma, \Phi, x_0) = 0.267..., \\
C_1(\Gamma, \Phi, x_1) &= C_2(\Gamma, \Phi, x_1) = 0.535..., \\
C_1(\Gamma, \Phi, x_4) &= C_2(\Gamma, \Phi, x_4) = 0.422.... 
\end{align*}
Therefore, $(\Gamma, \Phi, x_0)$ and $(\Gamma, \Phi, x_4)$ are well-arranged by Proposition \ref{prop:refWA}. 
Note that we cannot apply Proposition \ref{prop:refWA} to $x_1$ because
$C_1(\Gamma, \Phi, x_1) + C_2(\Gamma, \Phi, x_1) \ge 1$. 
Furthermore, we could not check whether $(\Gamma, \Phi, x_1)$ is well-arranged or not by our computer program 
(indeed, we checked the desired condition (3) in Definition \ref{defi:WA} for one triangulation $T_{\sigma}$ and $d_v$ satisfying (1) and (2), but the result was negative). 
Fortunately, by changing the periodic realization as Figure \ref{fig:t363243432434_2}, 
we can confirm that $(\Gamma, \Phi ', x_1)$ is well-arranged.

Since we have confirmed that $(\Gamma, \Phi, x_0)$, $(\Gamma, \Phi ', x_1)$, and $(\Gamma, \Phi, x_4)$ are well-arranged, 
according to the method explained in Remark \ref{rmk:compute}, we can determine their growth series as in Table \ref{table:3uni}. 

\renewcommand{\arraystretch}{1.8}
\begin{table}[h]
  \centering
  \begin{tabular}{cc}
    \hline
    Start pt. & Growth series  \\
    \hline \hline
    $x_0$ & $\frac{1+6 t+12 t^{2}+12 t^{3}+23 t^{4}+24 t^{5}+24 t^{6}+23 t^{7}+12 t^{8}+12 t^{9}+6 t^{10}+t^{11}}{(1-t^4)(1-t^7)}$\\ 
    $x_1, x_2, x_3, x_5, x_{10}, x_{11}$ & $\frac{1 + 4t + 5t^2 + 7t^3 + 5t^4 + 7t^5 + 5t^6 + 4t^7 + t^8}{(1-t)(1-t^7)}$ \\
    $x_4, x_6, x_7, x_8, x_9, x_{12}$ & $\frac{1+5 t+12 t^{2}+17 t^{3}+22 t^{4}+21 t^{5}+21 t^{6}+22 t^{7}+17 t^{8}+12 t^{9}+5 t^{10}+t^{11}}{(1-t^4)(1-t^7)}$ \\
    \hline
  \end{tabular}

\vspace{3mm}

\caption{Growth series of the $(3^6;3^2.4.3.4;3^2.4.3.4)$ 3-uniform tiling.}
\label{table:3uni}
\end{table}
\renewcommand{\arraystretch}{1}

In what follows, we shall give the calculation of $G_{\Gamma, x_0}$ in detail. 
By Theorem \ref{thm:WA}, it follows that $G_{\Gamma, x_0}(t)$ is of the form 
\[
G_{\Gamma, x_0}(t) = \frac{Q(t)}{(1-t^4)(1-t^7)}
\]
with some polynomial $Q(t)$ with $\deg Q \le 11$. 
In particular, we can conclude that the growth sequence $(s_{\Gamma, x_0, i})_{i \ge 0}$ satisfies the linear recurrence relation corresponding to $(1-t^4)(1-t^7)$ for $i \ge 1$. 

With the help of a computer program (breadth-first search algorithm), the first 12 terms $(s_{\Gamma, x_0, i})_{0 \le i \le 11}$ can be computed: 
1, 6, 12, 12, 24, 30, 36, 36, 42, 54, 54, 60. 
Then the sequence $(s_{\Gamma, x_0, i})_{i \ge 0}$ is completely determined by the linear recurrence relation, and its generating function can be calculated as
\begin{align*}
&G_{\Gamma, x_0}(t) 
= 
\frac{\bigl(\text{The terms of $(1-t^4)(1-t^7)\sum_{i=0}^{11}s_{\Gamma, x_0, i}t^i$ of degree $11$ or less}\bigr)}{(1-t^4)(1-t^7)}\\
&=
\frac{1+6 t+12 t^{2}+12 t^{3}+23 t^{4}+24 t^{5}+24 t^{6}+23 t^{7}+12 t^{8}+12 t^{9}+6 t^{10}+t^{11}}{(1-t^4)(1-t^7)}. 
\end{align*}
Here, we can see that $G_{\Gamma, x_0}(t)$ actually satisfies the reciprocity law $G_{\Gamma, x_0}(1/t) = G_{\Gamma, x_0}(t)$.

\subsection{3-dimensional periodic graphs}\label{subsection:eg_3dim}
In this subsection, we treat $3$-dimensional periodic graphs obtained by some carbon allotropes. 
\textit{Samara Carbon Allotrope Database} \cite{SACADA} currently lists $525$ carbon allotropes. 
In this paper, we examine only the carbon allotropes that satisfy the following conditions:
\begin{itemize}
\item The corresponding graph $\Gamma$ is a uniform graph (i.e.\ all vertices of $\Gamma$ are symmetric with respect to $\operatorname{Aut}(\Gamma)$). 

\item Each vertex of $\Gamma$ has order $4$.  
\end{itemize}
There are 49 such carbon allotropes in {\sf SACADA} database: 
$\#$1, 8, 10, 11, 12, 13, 20, 21, 29, 30, 31, 33, 35, 37, 39, 51, 52, 56, 57, 58, 59, 60, 65, 66, 67, 68, 69, 70, 71, 73, 74, 75, 76, 77, 78, 79, 80, 81, 82, 83, 84, 85, 86, 87, 88, 89, 613, 628, 629 in {\sf SACADA} database.
Of these graphs, $22$ graphs can be verified to be well-arranged by a computer program: 
$\#$1, 10, 21, 37, 39, 52, 56, 60, 65, 67, 74, 75, 76, 77, 80, 81, 82, 86, 87, 88, 89, 613 in {\sf SACADA} database. 
Using the method explained in Remark \ref{rmk:compute}, we can determine the growth series of these $22$ periodic graphs as in Table \ref{table:22series}. 
We also display the graphs $\Gamma$ and the growth polytopes $P_{\Gamma}$ for $\#1$ and $\#60$ in Figures \ref{fig:sacada1} and \ref{fig:sacada60}.

\renewcommand{\arraystretch}{1.8}
\begin{table}[htp]
  \centering
  \begin{tabular}{cc}
    \hline
    {\sf SACADA}\# & Growth series  \\
    \hline \hline
    $\#1$ & $\frac{1+2 t+4 t^{2}+2 t^{3}+t^{4}}{\left(1-t\right)^{2} \left(1-t^{2}\right)}$\\ 
    $\#10$ & $\frac{1+t+t^{2}+t^{3}}{(1-t)^3}$\\
    $\#21$ & $\frac{1+2 t+4 t^{2}+2 t^{3}+t^{4}}{(1-t)^2(1-t^2)}$\\
    $\#37$ & $\frac{1+2 t+5 t^{2}+5 t^{3}+5 t^{4}+2 t^{5}+t^{6}}{(1-t)^2(1-t^4)}$\\
    $\#39$ & $\frac{1+3 t+5 t^{2}+9 t^{3}+12 t^{4}+9 t^{5}+5 t^{6}+3 t^{7}+t^{8}}{(1-t)(1-t^3)(1-t^4)}$ \\
    $\#52$ & $\frac{1+3 t+5 t^{2}+8 t^{3}+10 t^{4}+8 t^{5}+5 t^{6}+3 t^{7}+t^{8}}{(1-t)(1-t^3)(1-t^4)}$\\
    $\#56$ & $\frac{1+2 t+5 t^{2}+6 t^{3}+5 t^{4}+2 t^{5}+t^{6}}{(1-t)^2(1-t^4)}$\\
    $\#60$ & $\frac{1+3 t+7 t^{2}+11 t^{3}+11 t^{4}+7 t^{5}+3 t^{6}+t^{7}}{(1-t)(1-t^3)^2}$\\
    $\#65$ & $\frac{1+2 t+2 t^{2}+3 t^{3}+3 t^{4}+2 t^{5}+2 t^{6}+t^{7}}{(1-t)^2(1-t^5)}$\\
    $\#67$ & $\frac{1+3 t+6 t^{2}+9 t^{3}+9 t^{4}+6 t^{5}+3 t^{6}+t^{7}}{(1-t)(1-t^3)^2}$\\
    $\#74$ & $\frac{1+3 t+6 t^{2}+10 t^{3}+14 t^{4}+18 t^{5}+18 t^{6}+14 t^{7}+10 t^{8}+6 t^{9}+3 t^{10}+t^{11}}{(1-t)(1-t^5)^2}$ \\
    $\#75$ & $\frac{1+3 t+6 t^{2}+9 t^{3}+9 t^{4}+6 t^{5}+3 t^{6}+t^{7}}{(1-t)(1-t^3)^2}$\\
    $\#76$ & $\frac{1+2 t+4 t^{2}+4 t^{3}+6 t^{4}+4 t^{5}+4 t^{6}+2 t^{7}+t^{8}}{(1-t)^2(1-t^6)}$\\
    $\#77$ & $\frac{1+2 t+2 t^{2}+3 t^{3}+3 t^{4}+2 t^{5}+2 t^{6}+t^{7}}{(1-t)^2(1-t^5)}$\\
    $\#80$ & $\frac{1+3 t+6 t^{2}+10 t^{3}+12 t^{4}+12 t^{5}+10 t^{6}+6 t^{7}+3 t^{8}+t^{9}}{(1-t)(1-t^3)(1-t^5)}$ \\
    $\#81$ & $\frac{1+3 t+7 t^{2}+12 t^{3}+14 t^{4}+15 t^{5}+15 t^{6}+14 t^{7}+12 t^{8}+7 t^{9}+3 t^{10}+t^{11}}{(1-t)(1-t^3)(1-t^7)}$\\
    $\#82$ & $\frac{1+2 t+3 t^{2}+5 t^{3}+5 t^{4}+3 t^{5}+2 t^{6}+t^{7}}{(1-t)^2(1-t^5)}$\\
    $\#86$ & $\frac{1+3 t+5 t^{2}+7 t^{3}+9 t^{4}+12 t^{5}+15 t^{6}+16 t^{7}+15 t^{8}+12 t^{9}+9 t^{10}+7 t^{11}+5 t^{12}+3 t^{13}+t^{14}}{(1-t)(1-t^6)(1-t^7)}$\\
    $\#87$ & $\frac{1+3 t+5 t^{2}+8 t^{3}+11 t^{4}+11 t^{5}+8 t^{6}+5 t^{7}+3 t^{8}+t^{9}}{(1-t)(1-t^3)(1-t^5)}$\\
    $\#88$ & $\frac{1+3 t+7 t^{2}+11 t^{3}+15 t^{4}+20 t^{5}+20 t^{6}+15 t^{7}+11 t^{8}+7 t^{9}+3 t^{10}+t^{11}}{(1-t)(1-t^5)^2}$\\
    $\#89$ & $\frac{1+2 t+4 t^{2}+3 t^{3}+4 t^{4}+4 t^{5}+6 t^{6}+4 t^{7}+4 t^{8}+3 t^{9}+4 t^{10}+2 t^{11}+t^{12}}{(1-t)^2(1-t^{10})}$\\
    $\#613$ & $\frac{1+3 t+6 t^{2}+9 t^{3}+9 t^{4}+6 t^{5}+3 t^{6}+t^{7}}{1-t-2 t^{3}+2 t^{4}+t^{6}-t^{7}}$\\
    \hline
  \end{tabular}

\vspace{3mm}

\caption{Growth series of the $22$ carbon allotropes.}
\label{table:22series}
\end{table}
\renewcommand{\arraystretch}{1}

\begin{figure}[h]
\begin{minipage}[t]{0.49\hsize}
\centering
\includegraphics[height=\zua]{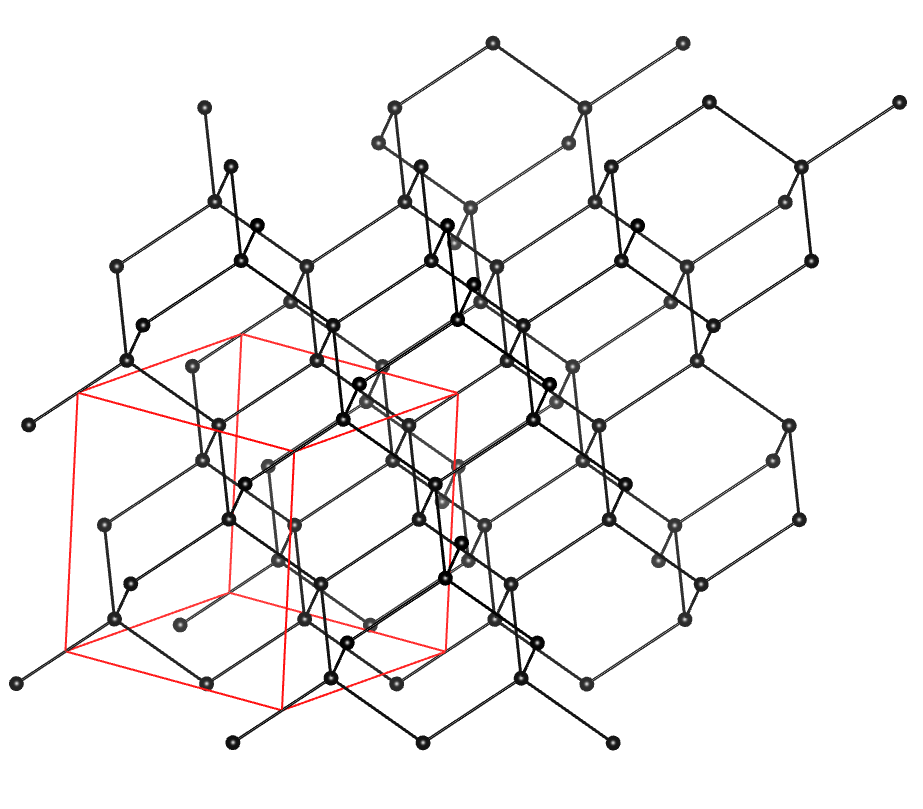}
\end{minipage}
\begin{minipage}[t]{0.49\hsize}
\centering
\includegraphics[height=\zua]{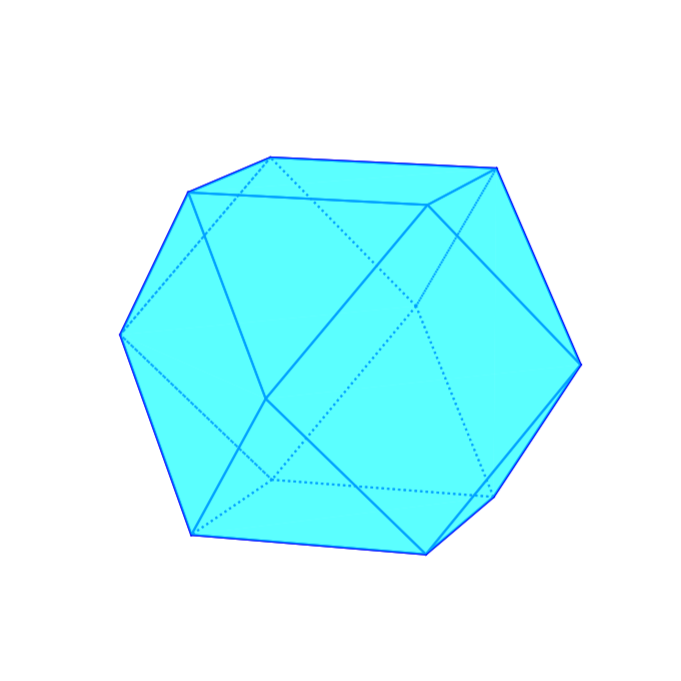}
\end{minipage}

\vspace{\zub}

\caption{The diamond crystal structure ({\sf SACADA} \#1) and its $P_{\Gamma}$.}
\label{fig:sacada1}
%\end{figure}

\vspace{6mm}

%\begin{figure}[h]
\begin{minipage}[t]{0.49\hsize}
\centering
\includegraphics[height=\zua]{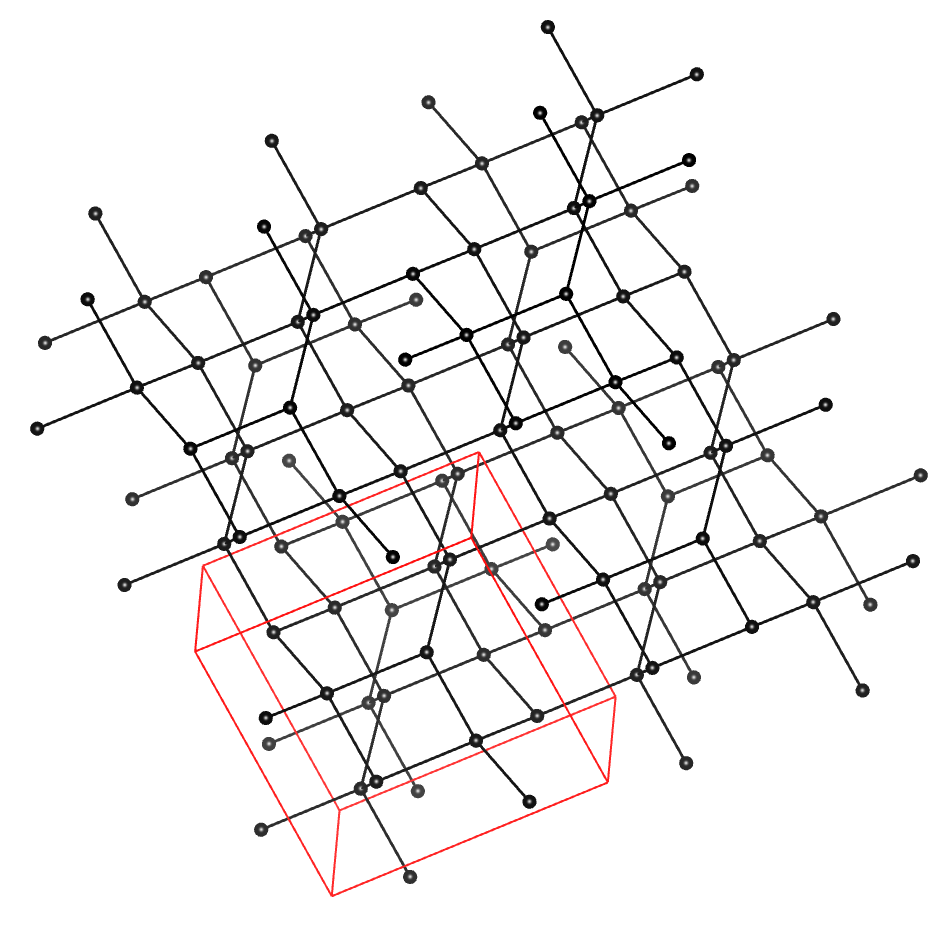}
\end{minipage}
\begin{minipage}[t]{0.49\hsize}
\centering
\includegraphics[height=\zua]{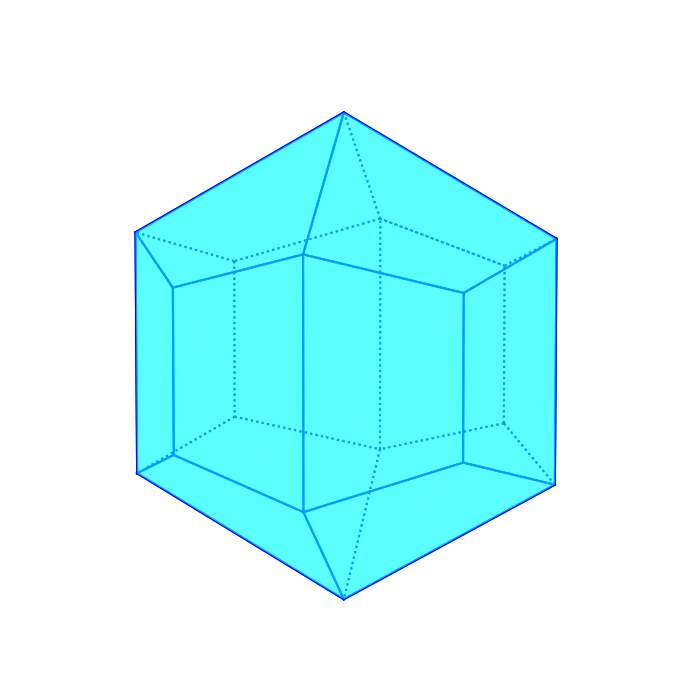}
\end{minipage}

\vspace{\zub}

\caption{A carbon allotrope {\sf SACADA} \#60 and its $P_{\Gamma}$.}
\label{fig:sacada60}
\end{figure}

\appendix

\section{Properties of the growth polytope}\label{section:GP}

In this section, we explain some properties of the growth polytope defined in Subsection \ref{subsection:GP}. 

\begin{lem}[{\cite{Fri13}*{Proposition 21}}]\label{lem:int}
If $(\Gamma, L)$ is a strongly connected $n$-dimensional periodic graph, then 
we have $0 \in \operatorname{int}(P _{\Gamma})$. 
In particular, we have $d_{P _{\Gamma}}(y) < \infty$ for any $y \in L_{\mathbb{R}}$. 
\end{lem}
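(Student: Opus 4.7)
The plan is to prove that $0$ lies in the interior of $P_\Gamma$ by showing that $\operatorname{Im}(\nu) \cup \{0\}$ positively spans $L_\mathbb{R}$, and then to derive finiteness of $d_{P_\Gamma}$ as an easy corollary of having a full-dimensional neighborhood of $0$ inside $P_\Gamma$.

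First, I would show the following cone statement: for every $u \in L$, there exist cycles $q_1, \dots, q_\ell \in \operatorname{Cyc}_{\Gamma/L}$ with $u = \sum_i w(q_i)\,\nu(q_i)$. To see this, fix any vertex $x_0 \in V_{\Gamma}$; by strong connectedness there is a walk $p$ in $\Gamma$ from $x_0$ to $u + x_0$, so $\overline{p}$ is a closed walk in $\Gamma/L$. Applying Lemma \ref{lem:bunkai}(1) together with Remark \ref{rmk:bunkai}(1), we can write $\langle \overline{p} \rangle = \sum_{i=1}^{\ell} \langle q_i \rangle$ with $q_i \in \operatorname{Cyc}_{\Gamma/L}$, and then
\[
u \;=\; \operatorname{vec}(p) \;=\; \mu(\langle \overline{p} \rangle) \;=\; \sum_{i=1}^{\ell} \mu(\langle q_i \rangle) \;=\; \sum_{i=1}^{\ell} w(q_i)\, \nu(q_i).
\]

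Next, fix a $\mathbb{Z}$-basis $e_1, \dots, e_n$ of $L$. Applying the previous step to each of $\pm e_1, \dots, \pm e_n$, I find, for each choice of sign and index $i$, a positive scalar $\lambda_{\pm,i} > 0$ and a point $w_{\pm,i} \in \operatorname{Im}(\nu)$ so that $\pm e_i = \lambda_{\pm,i}\, w_{\pm,i}$; equivalently, $\pm e_i / \lambda_{\pm,i} \in \operatorname{Im}(\nu) \subset P_\Gamma$. Since $0 \in P_\Gamma$ by definition, convexity implies that $P_\Gamma$ contains the convex hull of $\{0\} \cup \{\pm e_i / \lambda_{\pm,i}\}_{i=1}^{n}$, which is a full-dimensional cross-polytope centered at the origin. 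Hence $0 \in \operatorname{int}(P_\Gamma)$.

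For the final assertion, $0 \in \operatorname{int}(P_\Gamma)$ gives some $\epsilon > 0$ with $B(0,\epsilon) \subset P_\Gamma$ (in any fixed norm on $L_\mathbb{R}$). For any $y \in L_\mathbb{R}$, setting $t = \|y\|/\epsilon$ yields $y/t \in B(0,\epsilon) \subset P_\Gamma$, so $d_{P_\Gamma}(y) \le t < \infty$.

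I do not expect a serious obstacle here: the only mildly subtle point is the passage from a walk realizing an arbitrary lattice translation to a non-negative combination of cycle vectors, but this is exactly what Lemma \ref{lem:bunkai} and Remark \ref{rmk:bunkai} provide. Everything else is elementary convex geometry.
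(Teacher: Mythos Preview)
Your overall approach matches the paper's: use strong connectedness to get a walk from $x_0$ to $u+x_0$, decompose its image in $\Gamma/L$ into cycles via Lemma~\ref{lem:bunkai} and Remark~\ref{rmk:bunkai}(1), and conclude $u \in mP_\Gamma$ for some $m>0$. The paper stops at this claim and leaves the passage to $0\in\operatorname{int}(P_\Gamma)$ implicit; your explicit cross-polytope construction from $\pm e_1,\dots,\pm e_n$ is a fine way to make that step visible.

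There is one genuine slip in the second paragraph. From the decomposition $\pm e_i=\sum_j w(q_j)\,\nu(q_j)$ you \emph{cannot} in general extract a single point $w_{\pm,i}\in\operatorname{Im}(\nu)$ with $\pm e_i=\lambda_{\pm,i}\,w_{\pm,i}$: the sum typically has several terms whose $\nu(q_j)$ point in different directions, and there is no reason a given $\pm e_i$ should be a scalar multiple of any one cycle vector. The correct statement is that, setting $m_{\pm,i}:=\sum_j w(q_j)>0$, the point $\pm e_i/m_{\pm,i}$ is a convex combination of the $\nu(q_j)$ and hence lies in $P_\Gamma$ (not necessarily in $\operatorname{Im}(\nu)$). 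With this correction your cross-polytope argument goes through unchanged, since all you actually use afterwards is $\pm e_i/m_{\pm,i}\in P_\Gamma$.
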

\begin{proof}
It is sufficient to prove the following claim: 
\begin{itemize}
\item For any $u \in L$, there exists $m \in \mathbb{Z}_{>0}$ such that $u \in m P _{\Gamma}$. 
\end{itemize}
We fix $x_0 \in V_{\Gamma}$. 
Since the graph $\Gamma$ is strongly connected by assumption, 
there exists a walk $p$ in $\Gamma$ from $x_0$ to $u + x_0$. 
Then, by applying Lemma \ref{lem:bunkai}(1) to $\overline{p}$ (cf.\ Remark \ref{rmk:bunkai}(1)), 
there exists a sequence $q_1, \ldots, q_{\ell} \in \operatorname{Cyc}_{\Gamma /L}$ satisfying
$\langle \overline{p} \rangle = \sum _{i=1} ^{\ell} \langle q_i \rangle$. 
Since $\frac{\mu (\langle q \rangle)}{w(q)} \in P _{\Gamma}$ holds for any $q \in \operatorname{Cyc}_{\Gamma /L}$, 
we have 
\[
u = \mu ( \langle \overline{p} \rangle ) = \sum _{i=1}^{\ell} \mu(\langle q_i \rangle)
\in \biggl( \sum _{i=1} ^{\ell} w(q_i) \biggr) P _{\Gamma}, 
\]
which completes the proof. 
\end{proof}

The following theorem shows that the growth polytope $P _{\Gamma}$ can describe 
the asymptotic behavior of the growth sequence. 
In the theorem, we fix a periodic realization $\Phi$, and $C' _1$ and $C' _2$ depend on the choice of $\Phi$. 
We should emphasize that the growth polytope $P_{\Gamma}$ itself does not depend on the choice of the realization. 

In \cite{SM20b}*{Theorem 1}, Shutov and Maleev explain that the following theorem is proved 
in the papers \cite{Zhu02} and \cite{MS11} (written in Russian). 
We also emphasize that Kotani and Sunada in \cite{KS06}, Fritz in \cite{Fri13}, and 
Akiyama, Caalim, Imai and Kaneko in \cite{ACIK} have similar results. 
We will give a proof since the proof is referred to the proof of Proposition \ref{prop:C2}. 

\begin{thm}\label{thm:asymp}
Let $(\Gamma , L)$ be a strongly connected $n$-dimensional periodic graph. 
Let $\Phi: V_{\Gamma} \to L_{\mathbb{R}}$ be a periodic realization, 
and let $x_0 \in V_{\Gamma}$. Then there exist $C' _1, C' _2 \in \mathbb{R}_{\ge 0}$ such that 
for any $y \in V _{\Gamma}$, we have 
\[
d_{P_{\Gamma}, \Phi} (x_0, y) - C' _1 \le d_{\Gamma}(x_0, y) \le d_{P_{\Gamma}, \Phi} (x_0,y) + C' _2. 
\]
\end{thm}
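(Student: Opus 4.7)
The plan is to handle the two inequalities separately, relying on the cycle/path decomposition (Lemma \ref{lem:bunkai}) and on $0 \in \operatorname{int}(P_{\Gamma})$ (Lemma \ref{lem:int}).

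For the inequality $d_{P_{\Gamma},\Phi}(x_0,y) \le d_{\Gamma}(x_0,y) + C'_1$, I would take a shortest walk $p$ from $x_0$ to $y$ in $\Gamma$, so $w(p) = d_{\Gamma}(x_0,y)$, and decompose $\overline{p}$ via Lemma \ref{lem:bunkai}(1) into a walkable sequence $(q_0,q_1,\dots,q_{\ell})$ with $q_0$ a path (starting at $\overline{x_0}$) and $q_1,\dots,q_{\ell}$ cycles. By Remark \ref{rmk:bunkai}(3) I would rewrite
\[
\Phi(y) - \Phi(x_0) \;=\; \operatorname{vec}_{\Phi}(p) \;=\; \mu_{\Phi}(\langle q_0\rangle) + \sum_{i=1}^{\ell} \mu(\langle q_i\rangle) \;\in\; \mu_{\Phi}(\langle q_0\rangle) + \bigl(\textstyle\sum_{i=1}^{\ell} w(q_i)\bigr) P_{\Gamma},
\]
because $\nu(q_i)=\mu(\langle q_i\rangle)/w(q_i) \in P_{\Gamma}$. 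Subadditivity of $d_{P_{\Gamma}}$ (valid since $0 \in \operatorname{int}(P_{\Gamma})$) then yields $d_{P_{\Gamma}}(\Phi(y)-\Phi(x_0)) \le d_{P_{\Gamma}}(\mu_{\Phi}(\langle q_0\rangle)) + \sum w(q_i) \le d_{P_{\Gamma}}(\mu_{\Phi}(\langle q_0\rangle)) + d_{\Gamma}(x_0,y)$. Since there are only finitely many paths $q_0$ in $\Gamma/L$ (length $\le c-1$ where $c := \#(V_{\Gamma}/L)$), setting $C'_1$ to be the maximum of $d_{P_{\Gamma}}(\mu_{\Phi}(\langle q_0\rangle))$ over all such $q_0$ starting at $\overline{x_0}$ suffices.

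For the inequality $d_{\Gamma}(x_0,y) \le d_{P_{\Gamma},\Phi}(x_0,y) + C'_2$, I first fix once and for all, for each $v \in V(P_{\Gamma})$, a cycle $q_v \in \nu^{-1}(v)$ and set $d_v := w(q_v)$, together with a triangulation $T_{\sigma}$ of each facet $\sigma$ with $V(\Delta) \subset V(\sigma)$. Setting $r := d_{P_{\Gamma},\Phi}(x_0,y)$, I write $\Phi(y) - \Phi(x_0) = \sum_{v \in V(\Delta)} a_v v$ for some $\sigma \in \operatorname{Facet}(P_{\Gamma})$, $\Delta \in T_{\sigma}$, $a_v \ge 0$ with $\sum a_v \le r$ (this is the cone-decomposition of $rP_{\Gamma}$ induced by the triangulation of its boundary). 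Splitting $a_v = n_v d_v + \beta_v$ with $n_v \in \mathbb{Z}_{\ge 0}$ and $\beta_v \in [0,d_v)$ and setting $y' := y - \sum_v n_v \mu(\langle q_v\rangle) \in V_{\Gamma}$, I obtain $\Phi(y') - \Phi(x_0) = \sum_v \beta_v v$, which lies in the bounded set
\[
Q := \bigcup_{\sigma \in \operatorname{Facet}(P_{\Gamma}),\; \Delta \in T_{\sigma}} \Bigl( \sum_{v \in V(\Delta)} [0,d_v)\, v \Bigr).
\]
The set $V' := \{z \in V_{\Gamma} \mid \Phi(z) \in \Phi(x_0) + Q\}$ is finite by Remark \ref{rmk:finmap}(2), since the map $z \mapsto (\Phi(z),\overline{z})$ is injective and $V_{\Gamma}/L$ is finite.

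The key step, which is the main obstacle to overcome, is to construct a walk from $x_0$ to $y$ whose weight is $\sum_v n_v d_v$ plus a constant: the naive plan of taking a walk $p_0$ from $x_0$ to $y'$ and inserting $n_v$ copies of each $q_v$ only works if $\operatorname{supp}(\overline{p_0})$ meets every $\operatorname{supp}(q_v)$. I would resolve this by first noting that $\Gamma/L$ is strongly connected (since $\Gamma$ is), so I can choose a walk $\tilde{p}_0^{(z)}$ in $\Gamma$ from $x_0$ to each $z \in V'$ whose image $\overline{\tilde{p}_0^{(z)}}$ in $\Gamma/L$ visits every vertex of $V_{\Gamma}/L$; such walks exist and have weight bounded by a constant $N$ depending only on $(\Gamma,\Phi,x_0)$. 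For the specific $y' \in V'$ at hand, using $p_0 := \tilde{p}_0^{(y')}$ I get $\operatorname{supp}(\overline{p_0}) = V_{\Gamma}/L$, so for each $v$ with $n_v > 0$ the sequence consisting of $\overline{p_0}$ and $n_v$ copies of $q_v$ is walkable by Lemma \ref{lem:bunkai}(2), producing a walk $p$ from $x_0$ to $y' + \sum_v n_v \mu(\langle q_v\rangle) = y$ of weight $w(p) = w(p_0) + \sum_v n_v d_v \le N + \sum_v a_v \le N + r$. Taking $C'_2 := N$ completes the proof.
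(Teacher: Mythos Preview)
Your proof is correct and follows essentially the same strategy as the paper: decompose a shortest walk into a path plus cycles for the lower bound, and for the upper bound reduce modulo the lattice generated by $d_v v$ to a bounded fundamental set $Q$, then use a ``covering walk'' (one whose image in $\Gamma/L$ visits every vertex) so that all the cycles $q_v$ can be spliced in. The only differences are cosmetic: the paper takes $C'_1 := \max_{y \in B'_{c-1}}\bigl(d_{P_{\Gamma},\Phi}(x_0,y)-d_{\Gamma}(x_0,y)\bigr)$ and $C'_2 := \max\bigl\{d'(x_0,y)-d_{P_{\Gamma},\Phi}(x_0,y)\mid \Phi(y)-\Phi(x_0)\in Q\bigr\}$ (where $d'$ is the covering-walk distance), which are slightly sharper than your constants but obtained by the same mechanism.
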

\begin{proof}
Set $c := \# (V_{\Gamma} /L)$ and 
\[
B'_{c-1} := \{ y \in V_{\Gamma} \mid \text{there exists a walk $p$ from $x_0$ to $y$ with $\operatorname{length}(p) \le c-1$}\}. 
\]
We define $C' _1 \in \mathbb{R}_{\ge 0}$ by
\[
C' _1 := \max _{y \in B'_{c-1}} \bigl( d_{P_{\Gamma}, \Phi}(x_0, y) - d_{\Gamma}(x_0, y)  \bigr). 
\]
Note that $C' _1$ exists by Lemma \ref{lem:int} and the fact that $B'_{c-1}$ is a finite set. 

Let $y \in V_{\Gamma}$. 
We prove $d_{P_{\Gamma}, \Phi}(x_0, y) - C' _1 \le d_{\Gamma}(x_0, y)$. 
Let $p$ be a walk in $\Gamma$ from $x_0$ to $y$ such that $w(p) = d := d_{\Gamma}(x_0, y)$. 
By applying Lemma \ref{lem:bunkai}(1) to $\overline{p}$, 
there exists a walkable sequence $(q_0, q_1, \ldots , q_{\ell})$ 
such that $\langle \overline{p} \rangle = \sum _{i=0} ^{\ell} \langle q_i \rangle$. 
Then, the following three conditions hold. 
\begin{itemize}
\item 
$\operatorname{length}(q_0) \le c-1$. 

\item 
$\Phi(y) - \Phi(x_0) = \mu _{\Phi} (\langle \overline{p} \rangle) = 
\sum _{i=0}^{\ell} \mu _{\Phi} (\langle q_i \rangle)$. 

\item 
$d = w(p) = 
\sum _{i=0}^{\ell} w(q_i)$. 
\end{itemize}
Here, the first condition follows from the fact that $q_0$ is a path. 
Since we have $\mu (\langle q \rangle) \in w(q) \cdot P_{\Gamma}$ for each $q \in \operatorname{Cyc}_{\Gamma /L}$, 
we have 
\[
\sum _{i=1}^{\ell} \mu (\langle q_i \rangle) \in 
\left( \sum _{i=1}^{\ell} w(q_i) \right) P_{\Gamma}. 
\]
Let $p_0$ be the unique lift of $q_0$ with initial point $x_0$. 
Then, by the choice of $C' _1$, we have 
\[
d_{P_{\Gamma}, \Phi} (x_0, t(p_0)) - d_\Gamma \bigl(x_0,t(p_0) \bigr) \leq C' _1,
\]
and hence, 
\[
\mu _{\Phi} (\langle q_0 \rangle) = \operatorname{vec} _{\Phi} (p_0) = \Phi(t(p_0)) - \Phi(x_0) 
\in \bigl( C' _1 + w(p_0) \bigr) P_{\Gamma}. 
\]
Therefore, we have
\begin{align*}
\Phi(y) - \Phi(x_0) 
&= \sum _{i=0} ^{\ell} \mu _{\Phi} (\langle q_i \rangle) \\
& \in \left( C' _1 + w(q_0) +
\sum _{i=1} ^{\ell} w(q_i) \right) P_{\Gamma}\\
& = (C' _1 + d)P_{\Gamma}, 
\end{align*}
and hence, we have $d_{P_{\Gamma}, \Phi}(x_0, y) \le C' _1 + d$. 

Next, we define $C' _2 \in \mathbb{R}_{\ge 0}$ as follows. 
\begin{itemize}
\item 
First, we define $d_v := \min _{q \in \nu ^{-1}(v)} w(q)$ for each $v \in V(P_{\Gamma})$. 

\item
For $y \in V_{\Gamma}$, we define $d'(x_0, y)$ as the smallest weight $w(p)$ of 
a walk $p$ from $x_0$ to $y$ satisfying $\operatorname{supp}(\overline{p}) = V_{\Gamma} /L$.
We have $d'(x_0, y) < \infty$ since $\Gamma$ is assumed to be strongly connected. 

\item
For each $\sigma \in \operatorname{Facet}(P_{\Gamma})$, 
we fix a triangulation $T_{\sigma}$ of $\sigma$ such that $V(\Delta) \subset V(\sigma)$ holds for any $\Delta \in T_{\sigma}$. 

\item
We define a bounded set $Q \subset L_{\mathbb{R}}$ as follows: 
\[
Q := 
\bigcup _{\substack{\sigma \in \operatorname{Facet}(P_{\Gamma}),\\ 
\Delta \in T_{\sigma}}}
\left( \sum _{v \in V(\Delta)}  [0,1) d_v v \right)
\subset L_{\mathbb{R}}. 
\]

\item
Then, we set 
\[
C' _2 := \max \bigl \{ d'(x_0, y) - d_{P_{\Gamma}, \Phi} ( x_0, y ) \ \big | \ 
y \in V_{\Gamma}, \ \Phi(y) - \Phi(x_0) \in Q \bigr \}. 
\]
\end{itemize}
Such $C' _2$ exists since the set
\[
\bigl \{ y \in V_{\Gamma} \ \big | \ \Phi(y) - \Phi(x_0) \in Q \bigr \}
\]
is a finite set. 

Let $y \in V_{\Gamma}$. 
We prove $d_{\Gamma}(x_0, y) \le d_{P_{\Gamma}, \Phi} (x_0, y) + C' _2$. 
By Lemma \ref{lem:int}, there exist $\sigma \in \operatorname{Facet}(P_{\Gamma})$ and $\Delta \in T_{\sigma}$
such that $\Phi(y) - \Phi(x_0) \in \mathbb{R}_{\ge 0} \Delta$. 
Then we can uniquely write 
\[
\Phi(y) - \Phi(x_0)  = \sum _{v \in V(\Delta)} b_v d_v v
\]
with $b_v \in \mathbb{R}_{\ge 0}$. 
Then, we have 
\[
d_{P_{\Gamma}, \Phi} (x_0, y) = \sum _{v \in V(\Delta)} b_v d_v. 
\]
We define $y' := - \bigl( \sum _{v \in V(\Delta)} \lfloor b_v \rfloor d_v v \bigr) + y$. 
Here, we have $d_v v \in L$ by the choice of $d_v$. 
Then, $y'$ satisfies
\begin{align*}
\Phi(y') - \Phi(x_0)
&= \Phi(y) - \Phi(x_0) - \sum _{v \in V(\Delta)} \lfloor b_v \rfloor d_v v \\
&= \sum _{v \in V(\Delta)} (b_v - \lfloor b_v \rfloor) d_v v \\
& \in \sum_{v \in V(\Delta)} [0,1) d_v v
\subset Q. 
\end{align*}
By the choice of $C' _2$, 
there exists a walk $q$ in $\Gamma$ from $x_0$ to $y'$ satisfying 
\[
\operatorname{supp}(\overline{q}) = V_{\Gamma /L}, \qquad 
w(q) \le C' _2 + d_{P_{\Gamma}, \Phi} (x_0, y'). 
\]
For each $v \in V(\Delta)$, we can take $q_v \in \operatorname{Cyc}_{\Gamma /L}$ such that 
\[
w(q_v) = d_v, \qquad 
\mu(\langle q_v \rangle) = d_v v. 
\]
Since we have $\operatorname{supp}(\overline{q}) = V_{\Gamma /L}$, and $q_v$'s are closed walks, 
there exists a walk $r'$ in $\Gamma /L$ such that 
\[
\langle r' \rangle = \langle \overline{q} \rangle + \sum _{v \in V(\Delta)} \lfloor b_v \rfloor \langle q_v \rangle. 
\]
Let $r$ be the unique lift of $r'$ with initial point $x_0$. 
Then, we have
\[
t(r) 
= \left( \sum _{v \in V(\Delta)} \lfloor b_v \rfloor d_v v \right)+ t(q) 
= \left( \sum _{v \in V(\Delta)} \lfloor b_v \rfloor d_v v \right)+ y' 
= y. 
\]
Furthermore, its weight satisfies
\begin{align*}
w(r) 
&= w(q) + \sum _{v \in V(\Delta)} \lfloor b_v \rfloor w(q_v) \\
&\le C' _2 + d_{P_{\Gamma}, \Phi}(x_0,y') + \sum _{v \in V(\Delta)} \lfloor b_v \rfloor d_v \\
&= C' _2 + \sum _{v \in V(\Delta)} (b_v - \lfloor b_v \rfloor) d_v 
    + \sum _{v \in V(\Delta)} \lfloor b_v \rfloor d_v \\
&= C' _2 + d_{P_{\Gamma}, \Phi} (x_0,y).
\end{align*}
Therefore, we have $d_{\Gamma}(x_0, y) \le C' _2 + d_{P_{\Gamma}, \Phi} (x_0, y)$. 
\end{proof}

\begin{ex}\label{ex:C}
For the Wakatsuki graph with the injective periodic realization $\Phi$ in Example \ref{ex:WG} and the start point $x_0 = v'_2$, 
we shall compute the values $C' _1$ and $C' _2$. 
We identify $V_{\Gamma}$ with the subset of $L_{\mathbb{R}}$. 

First, $C' _1$ is given by 
\[
C' _1 := \max _{y \in B'_{c-1}} \bigl( d_{P_{\Gamma}, \Phi}(x_0, y) - d_{\Gamma}(x_0, y)  \bigr) = 1. 
\]
Note that we have $c = \# (V_{\Gamma} /L) = 3$ in this case, 
and $B'_2$ consists of seven vertices $x_0, x_1, \ldots, x_6$ shown in Figure \ref{fig:C1}. 
The values $d_{P_{\Gamma}, \Phi}(x_0, x_i)$ and $d_{\Gamma}(x_0, x_i)$ for $i = 0, 1 , \ldots , 6$ are as in Table \ref{table:C1}. 

\begin{figure}[htbp]
\centering
\includegraphics[height=6cm]{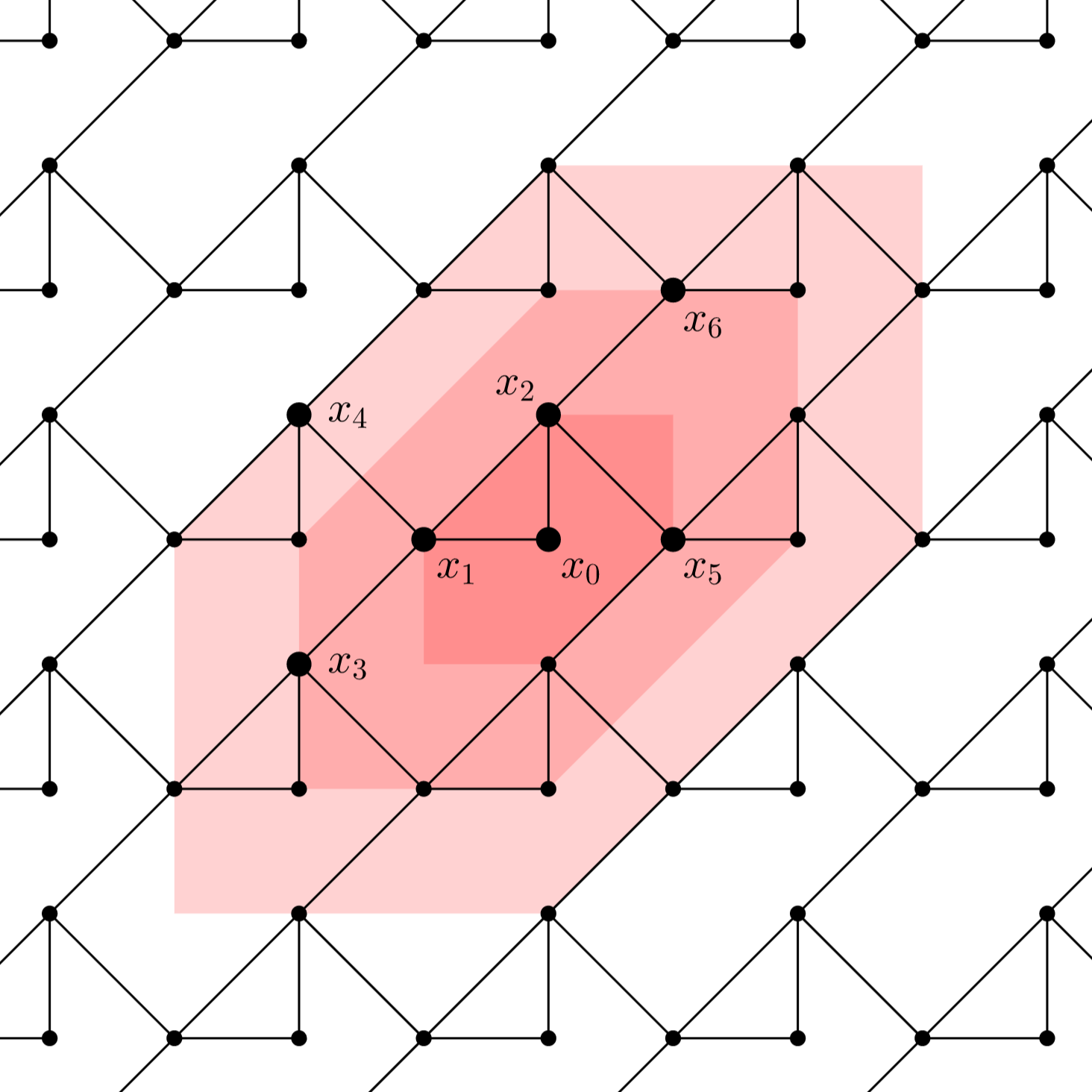}

\vspace{\zub}

\caption{$x_0, x_1, \ldots, x_6$, and $x_0 + i P_{\Gamma}$ for $i = 1, 2, 3$.}
\label{fig:C1}
\end{figure}

\begin{table}[h]
  \centering
  \begin{tabular}{cccccccc}
    \hline
    $i$ & $0$ & $1$ & $2$ & $3$ & $4$ & $5$ & $6$   \\
    \hline \hline
    $d_{P_{\Gamma}, \Phi}(x_0, x_i)$ & $0$ & $1$ & $1$ & $2$ & $3$ & $1$ & $2$ \\
    $d_{\Gamma}(x_0, x_i)$ & $0$ & $1$ & $1$ & $2$ & $2$ & $2$ & $2$ \\
    \hline
  \end{tabular}

\vspace{3mm}

\caption{$d_{P_{\Gamma}, \Phi}(x_0, x_i)$ and $d_{\Gamma}(x_0, x_i)$ for $i = 0, 1 , \ldots , 6$.}
\label{table:C1}
\end{table}

Next, $C' _2$ is given by
\[
C' _2 = \max _{y \in \operatorname{int}(Q) \cap V_{\Gamma}} \bigl( d'(x_0, y) - d_{P_{\Gamma}, \Phi}(x_0, y) \bigr) = 3, 
\]
where $Q$ is the hexagram-like figure illustrated as in Figure \ref{fig:C2}. 
Note that $\operatorname{int}(Q) \cap V_{\Gamma}$ consists of nine points $x_0, x_1, \ldots, x_8$. 
The values $d' (x_0, x_i)$ and $d_{P_{\Gamma}, \Phi}(x_0, x_i)$ for $i = 0, 1 , \ldots , 8$ are as in Table \ref{table:C2}. 

\begin{figure}[htbp]
\centering
\includegraphics[height=7cm]{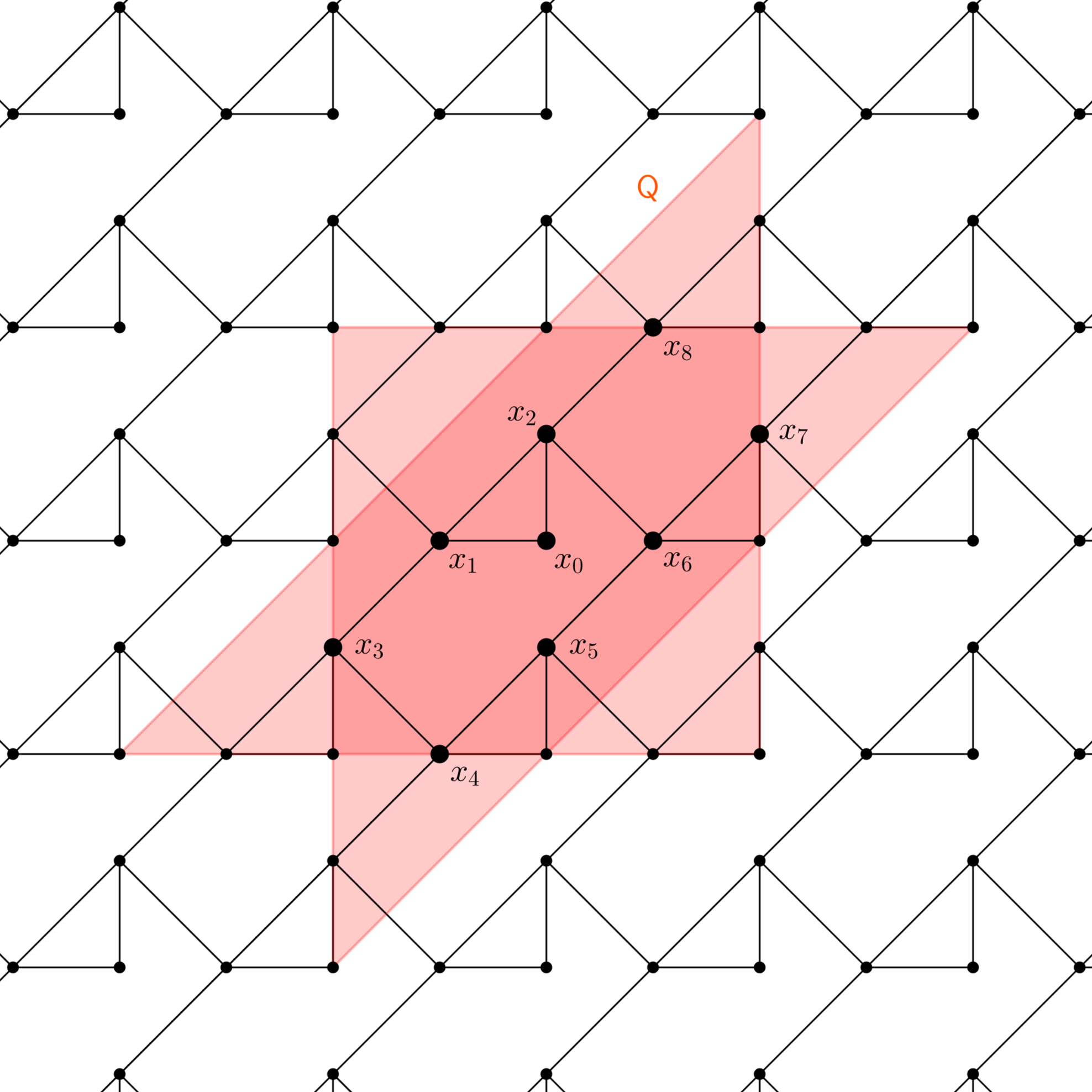}

\vspace{\zub}

\caption{$Q$ and $x_0, x_1, \ldots, x_8$.}
\label{fig:C2}
\end{figure}

\begin{table}[htbp]
  \centering
  \begin{tabular}{cccccccccc}
    \hline
    $i$ & $0$ & $1$ & $2$ & $3$ & $4$ & $5$ & $6$ & $7$ & $8$  \\
    \hline \hline
    $d' (x_0, x_i)$ & $3$ & $2$ & $2$ & $2$ & $3$ & $3$ & $2$ & $3$ & $2$ \\
    $d_{P_{\Gamma}, \Phi}(x_0, x_i)$ & $0$ & $1$ & $1$ & $2$ & $2$ & $1$ & $1$ & $2$ & $2$ \\
    \hline
  \end{tabular}

\vspace{3mm}

\caption{$d' (x_0, x_i)$ and $d_{P_{\Gamma}, \Phi}(x_0, x_i)$ for $i = 0, 1 , \ldots , 8$.}
\label{table:C2}
\end{table}
\end{ex}

The following corollary is proved by Shutov and Maleev \cite{SM20b}, and this is an immediate consequence of Theorem \ref{thm:asymp}. 
For a lattice $L \simeq \mathbb{Z}^n$ and a polytope $P \subset L_{\mathbb{R}}$, 
we can define the volume $\operatorname{vol}_L(P)$ of $P$ as follows. 
We fix a group isomorphism $i : L \xrightarrow{\simeq} \mathbb{Z}^n$. 
Then, $i$ is extended to an isomorphim $i_{\mathbb{R}}: L_{\mathbb{R}} \xrightarrow{\simeq} \mathbb{R}^n$, and 
we define $\operatorname{vol}_L(P)$ as the volume of $i_{\mathbb{R}} (P) \subset \mathbb{R}^n$. 
Note that the value $\operatorname{vol}_L(P)$ is independent of the choice of $i$. 

\begin{cor}[\cite{SM20b}*{Theorem 2}]\label{cor:leadcoef}
Let $(\Gamma,L)$ be a strongly connected $n$-dimensional periodic graph, and let $x_0 \in V_{\Gamma}$.
Let $f_b$ and $f_s$ be the quasi-polynomials corresponding to the functions 
$b: i \mapsto b_{\Gamma, x_0, i}$ and $s: i \mapsto s_{\Gamma, x_0, i}$ (Theorem \ref{thm:NSMN}). 
Then the following assertions hold. 
\begin{enumerate}
\item 
Each constituent of $f_b$ is a polynomial of degree $n$, and its leading coefficient is 
$\#(V_\Gamma/L) \cdot \operatorname{Vol}_L(P_\Gamma)$. 

\item 
Let $Q_0, \ldots , Q_{N-1}$ be the constituents of $f_s$. 
Then, for each $i = 0, \ldots, N-1$, we have $\deg Q_i \le n-1$. 
Furthermore, let $a_i$ denote the coefficient of $Q_i$ of degree $n-1$. 
Then, we have $\frac{1}{N}\sum _{i = 0}^{N-1} a_i = n \cdot \#(V_\Gamma/L) \cdot \operatorname{Vol}_L(P_\Gamma)$. 
\end{enumerate}
\end{cor}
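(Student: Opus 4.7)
The plan is to combine Theorem \ref{thm:asymp} with a standard lattice-point estimate on dilates of $P_\Gamma$. First, using Remark \ref{rmk:finmap}(1), I would fix an injective periodic realization $\Phi\colon V_\Gamma \to L_{\mathbb{R}}$ and identify $V_\Gamma$ with its image. With $C_1', C_2' \ge 0$ as in Theorem \ref{thm:asymp}, the sandwich $d_{P_\Gamma, \Phi}(x_0, y) - C_1' \le d_\Gamma(x_0, y) \le d_{P_\Gamma, \Phi}(x_0, y) + C_2'$ gives, for every $i \in \mathbb{Z}_{\ge 0}$,
\[
\# \bigl( V_\Gamma \cap (\Phi(x_0) + (i - C_2') P_\Gamma) \bigr) \le b_{\Gamma, x_0, i} \le \# \bigl( V_\Gamma \cap (\Phi(x_0) + (i + C_1') P_\Gamma) \bigr),
\]
reducing the asymptotics of $b_{\Gamma, x_0, i}$ to counting vertices in dilates of $P_\Gamma$.

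Next I would compute those counts asymptotically. Fixing representatives $y_1, \ldots, y_c \in V_\Gamma$ of the $L$-orbits, where $c := \#(V_\Gamma/L)$, the disjoint decomposition $V_\Gamma = \bigsqcup_{j=1}^c (\Phi(y_j) + L)$ yields
\[
\# \bigl( V_\Gamma \cap (\Phi(x_0) + r P_\Gamma) \bigr) = \sum_{j=1}^c \# \bigl( L \cap (\Phi(x_0) - \Phi(y_j) + r P_\Gamma) \bigr).
\]
Since $0 \in \operatorname{int}(P_\Gamma)$ by Lemma \ref{lem:int}, the classical estimate $\# \bigl( L \cap (v + r P_\Gamma) \bigr) = \operatorname{vol}_L(P_\Gamma)\, r^n + O(r^{n-1})$, uniform for $v$ in any bounded set, gives
\[
\# \bigl( V_\Gamma \cap (\Phi(x_0) + r P_\Gamma) \bigr) = c \cdot \operatorname{vol}_L(P_\Gamma) \cdot r^n + O(r^{n-1}).
\]
Inserting this into the sandwich and expanding $(i \pm C)^n$, one finds that for every residue class $j$ modulo the quasi-period $N$ of $f_b$, $b_{\Gamma, x_0, kN+j}/(kN+j)^n \to c \cdot \operatorname{vol}_L(P_\Gamma)$ as $k \to \infty$. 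Because $\operatorname{vol}_L(P_\Gamma) > 0$ by Lemma \ref{lem:int}, each constituent $R_j$ of $f_b$ must have degree exactly $n$ with leading coefficient $c \cdot \operatorname{vol}_L(P_\Gamma)$, proving assertion (1).

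To deduce assertion (2), I would use the identity $s_i = b_i - b_{i-1}$ for $i \ge 1$: for $i$ sufficiently large, $f_s(i) = f_b(i) - f_b(i-1)$. Writing $f_b$ and $f_s$ with a common quasi-period $N$ and constituents $R_0, \ldots, R_{N-1}$ and $Q_0, \ldots, Q_{N-1}$, this gives $Q_i(x) = R_i(x) - R_{(i-1) \bmod N}(x-1)$. By (1), the $x^n$-coefficients of $R_i$ and $R_{(i-1) \bmod N}$ are both $c \cdot \operatorname{vol}_L(P_\Gamma)$ and cancel, yielding $\deg Q_i \le n-1$. Letting $\beta_j$ denote the $x^{n-1}$-coefficient of $R_j$, extracting the $x^{n-1}$-coefficient gives $a_i = n \cdot c \cdot \operatorname{vol}_L(P_\Gamma) + \beta_i - \beta_{(i-1) \bmod N}$; summing over $i = 0, \ldots, N-1$ the $\beta$-contributions telescope to zero, so $\frac{1}{N} \sum_{i=0}^{N-1} a_i = n \cdot c \cdot \operatorname{vol}_L(P_\Gamma)$ as required.

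The main obstacle is the uniform lattice-point estimate $\# \bigl( L \cap (v + r P_\Gamma) \bigr) = \operatorname{vol}_L(P_\Gamma)\, r^n + O(r^{n-1})$, which one should cite or derive explicitly (for instance, by a fundamental-parallelepiped decomposition with boundary error of order $r^{n-1}$) rather than treat as self-evident. Everything else is a direct polynomial manipulation of the sandwich and a telescoping cancellation.
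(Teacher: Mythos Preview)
Your proposal is correct and follows essentially the same route as the paper: fix an injective periodic realization, sandwich $b_{\Gamma,x_0,i}$ between vertex counts in dilates of $P_\Gamma$ via Theorem~\ref{thm:asymp}, and invoke the standard lattice-point asymptotic (the paper cites \cite{BR}*{Lemma 3.19} for exactly the fact you flag as needing a reference). The paper's proof is terser---it simply says ``(2) follows from (1)''---whereas you spell out the telescoping computation $a_i = n\cdot c\cdot\operatorname{vol}_L(P_\Gamma) + \beta_i - \beta_{(i-1)\bmod N}$; this is a welcome elaboration but not a different argument.
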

\begin{proof}
We fix an injective periodic realization $\Phi$ satisfying $\Phi(x_0) = 0$. 
By $\Phi$, we identify $V_{\Gamma}$ with the subset of $L_{\mathbb{R}}$. 
We take $C' _1$ and $C' _2$ to satisfy Theorem \ref{thm:asymp}. 
Then, for each $d \in \mathbb{Z}_{\ge 0}$, we have 
\[
\#((d-C' _2)P_\Gamma \cap V_\Gamma) \le b_{\Gamma, x_0, d} \le \#((d+C' _1)P_\Gamma \cap V_\Gamma). 
\]
Therefore, (1) follows from the following fact (cf.\ \cite{BR}*{Lemma 3.19}): 
\[
\#(V_\Gamma/L) \cdot \operatorname{Vol}_L (P_\Gamma) 
= \lim _{d \to \infty} \frac{1}{d^n} \cdot \# \bigl( d P_{\Gamma} \cap V_{\Gamma} \bigr). 
\]
(2) follows from (1). 
\end{proof}

\begin{rmk}
In crystallography, 
the invariant $\frac{1}{N}\sum _{i = 0}^{N-1} a_i$ in Corollary \ref{cor:leadcoef}(2) 
is called the \textit{topological density} (cf.\ \cite{GKBS96}). 
\end{rmk}

\begin{ex}
For the Wakatsuki graph (see Example \ref{ex:WG}), 
the invariant $\frac{1}{N}\sum _{i = 0}^{N-1} a_i$ in Corollary \ref{cor:leadcoef}(2) for the start point $x_0 = v'_0$ is given by 
$\frac{1}{2}\left( \frac{9}{2} + \frac{9}{2} \right) = \frac{9}{2}$ (see Example \ref{ex:CS}). 
The same invariant for the start point $x_0 = v'_2$ is also given by $\frac{1}{2}(3 + 6) = \frac{9}{2}$. 
They are actually equal to $2 \cdot \#(V_\Gamma/L) \cdot \operatorname{Vol}_L (P_\Gamma) = 2 \cdot 3 \cdot \frac{3}{4} = \frac{9}{2}$ 
as proved in Corollary \ref{cor:leadcoef}(2). 
\end{ex}

\section{Ehrhart theory}\label{section:ET}
In this section, we discuss a variant of Ehrhart theory (Theorem \ref{thm:Eh}), which is necessary for the proof of Theorem \ref{thm:ET}. The difference from the usual Ehrhart theory is that the center $v$ of the dilation need not be the origin, and the dilation factor may be shifted by a constant $\alpha$. 

\begin{defi}
Let $P \subset \mathbb{R}^N$ be a rational polytope, and let $v \in \mathbb{R}^N$ and $\alpha \in \mathbb{R}$. 

\begin{enumerate}
\item
We define a function $h_{P, v, \alpha}: \mathbb{Z} \to \mathbb{Z}$ as follows
\[
h_{P, v, \alpha}(d) := \# \left( (v + (d + \alpha) P) \cap \mathbb{Z}^N \right). 
\]
Here, we define $t P = \emptyset$ for $t < 0$. 

\item
Let $\operatorname{relint}(P)$ denote the relative interior of $P$. 
Then, we also define a function $\overset{\circ}{h} _{P, v, \alpha}: \mathbb{Z} \to \mathbb{Z}$ as follows
\[
\overset{\circ}{h}_{P, v, \alpha}(d) := \# \left( (v + (d + \alpha) \cdot \operatorname{relint}({P})) \cap \mathbb{Z}^N \right). 
\]
Here, we define $t \cdot \operatorname{relint}({P}) = \emptyset$ for $t \le 0$. 

\item
Let  
\[
H_{P, v, \alpha} (t) := \sum _{i \in \mathbb{Z}} h_{P, v, \alpha}(i) t^i, \qquad
\overset{\circ}{H}_{P, v, \alpha} (t) := \sum _{i \in \mathbb{Z}} \overset{\circ}{h}_{P, v, \alpha}(i) t^i
\]
denote their generating functions. 
\end{enumerate}
\end{defi}

\begin{rmk}  \hfill
\begin{enumerate}
\item
In the definition above, we have defined $0 \cdot P = \{ 0 \}$ but $0 \cdot \operatorname{relint}({P}) = \emptyset$. This definition is necessary for the equations ($\spadesuit$) in the proof of Theorem \ref{thm:Eh}.

\item
The usual Ehrhart theory (cf.\ \cite{BR}) treats the case where $v = 0$ and $\alpha = 0$. 
McMullen (in \cite{McM78}) and de Vries and Yoshinaga (in \cite{dVY21}*{Section 3}) discuss $h_{P, v, \alpha}$ when $\alpha = 0$. 
\end{enumerate}
\end{rmk}

\begin{lem}\label{lem:sym}
We have $h_{P,v,\alpha} = h_{-P,-v,\alpha}$ and $\overset{\circ}{h}_{P,v,\alpha} = \overset{\circ}{h}_{-P,-v,\alpha}$. 
\end{lem}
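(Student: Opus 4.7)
The plan is to exploit the bijection $\iota \colon \mathbb{R}^N \to \mathbb{R}^N$ given by $x \mapsto -x$. This map restricts to a bijection of $\mathbb{Z}^N$ with itself, so for any subset $A \subset \mathbb{R}^N$ one has $\#(A \cap \mathbb{Z}^N) = \#(\iota(A) \cap \mathbb{Z}^N)$. I would then compute
\[
\iota\bigl(v + (d+\alpha)P\bigr) = -v - (d+\alpha)P = -v + (d+\alpha)(-P),
\]
where the last equality uses the elementary identity $t(-P) = -(tP)$ for $t \ge 0$ (and for $t < 0$ both sides are empty by the convention $tP = \emptyset$). Applying the cardinality-preserving map $\iota$ to $(v+(d+\alpha)P) \cap \mathbb{Z}^N$ therefore yields $h_{P,v,\alpha}(d) = h_{-P,-v,\alpha}(d)$ for every $d \in \mathbb{Z}$.

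The same argument works verbatim for $\overset{\circ}{h}$, since $\iota$ sends $\operatorname{relint}(P)$ bijectively onto $\operatorname{relint}(-P)$ and the convention $t\cdot\operatorname{relint}(P) = \emptyset$ for $t \le 0$ matches on both sides. There is essentially no obstacle; the only point requiring a sentence of verification is the boundary case where $d+\alpha \le 0$, which is resolved by the definitional conventions.
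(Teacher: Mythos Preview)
Your proof is correct and follows essentially the same approach as the paper's: both use the negation map $x \mapsto -x$ on $\mathbb{R}^N$ (which bijects $\mathbb{Z}^N$ with itself) to carry $v + (d+\alpha)P$ onto $-v + (d+\alpha)(-P)$, and then note the same argument handles the relative-interior version.
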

\begin{proof}
The first assertion follows from
\begin{align*}
\# \left( (v + (d + \alpha) P) \cap \mathbb{Z}^N \right) 
&= \# \left( (- (v + (d + \alpha) P)) \cap \mathbb{Z}^N \right) \\
&= \# \left( (-v + (d + \alpha) (-P) \cap \mathbb{Z}^N \right). 
\end{align*}
The second assertion can be proved in the same way.
\end{proof}

\begin{thm}\label{thm:Eh}
Let $P \subset \mathbb{R}^N$ be a rational polytope of dimension $M$, and let $v \in \mathbb{R}^N$ and $\alpha \in \mathbb{R}$. 
Then, the following assertions hold: 
\begin{enumerate}
\item $h_{P, v, \alpha}$ is a quasi-polynomial on $d \ge - \alpha$, and 
$h_{P, v, \alpha}(d) = 0$ holds for $d < - \alpha$. 

\item $\overset{\circ}{h} _{P, v, \alpha}$ is a quasi-polynomial on $d > - \alpha$, and 
$\overset{\circ}{h} _{P, v, \alpha}(d) = 0$ holds for $d \le - \alpha$. 

\item $H_{P, v, \alpha}(1/t) = (-1)^{M+1} \overset{\circ}{H}_{P, -v, - \alpha}(t)$. 

\item 
Let $f_{P,v,\alpha}$ be the corresponding quasi-polynomial to the function $h_{P, v, \alpha}$ on $d \ge - \alpha$. 
Then, we have 
$f_{P,v,\alpha}(-i) = (-1)^M \overset{\circ}{h}_{P, -v, - \alpha}(i)$ for any $i \in \mathbb{Z}_{> \alpha}$. 
\end{enumerate}
\end{thm}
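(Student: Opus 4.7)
The strategy is the classical cone-over-polytope technique of Ehrhart, adapted to accommodate real shifts in both the translation $v$ and the dilation $\alpha$. First, I form the rational cone $C := \operatorname{cone}(\{1\} \times P) \subset \mathbb{R}^{N+1}$ of dimension $M+1$ and set $w := (-\alpha, v)$. Unwinding definitions, $h_{P,v,\alpha}(d)$ equals the number of integer points of height $d$ in the real translate $C + w$, and $\overset{\circ}{h}_{P,v,\alpha}(d)$ counts points in $\operatorname{relint}(C) + w$. Next, I fix a rational triangulation of $P$ and, via the standard signed-disjoint-union identity (inclusion-exclusion over the face poset of the triangulation), express both $H_{P,v,\alpha}$ and $\overset{\circ}{H}_{P,v,\alpha}$ as signed sums of the analogous generating functions for rational simplices of dimensions $0, 1, \ldots, M$. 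This reduces the theorem to the case where $P$ is a rational simplex.

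In the simplicial case, write the ray generators of $C$ as $u_i = (1, v_i)$ for the vertices $v_i$ of $P$, and pick a positive integer $k$ with $\tilde u_i := k u_i \in \mathbb{Z}^{N+1}$. The unique decomposition $x - w = p + \sum_i n_i \tilde u_i$ with $p \in \Pi := \sum_i [0,1)\tilde u_i$ and $n_i \in \mathbb{Z}_{\ge 0}$ gives
\[
H_{P,v,\alpha}(t) \;=\; \frac{\sum_{p \in \Pi \cap (\mathbb{Z}^{N+1} - w)} t^{\ell(p+w)}}{(1 - t^k)^{M+1}},
\]
where $\ell$ is the height coordinate, and the analogous formula for $\overset{\circ}{H}_{P,v,\alpha}$ uses the complementary half-open parallelepiped $\Pi^\circ := \sum_i (0,1]\tilde u_i$. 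All exponents are integers because $p + w \in \mathbb{Z}^{N+1}$. The ranges $\ell(p) \in [0, k(M+1))$ on $\Pi$ and $\ell(p) \in (0, k(M+1)]$ on $\Pi^\circ$ yield the vanishing statements in parts (1) and (2). Expanding $1/(1-t^k)^{M+1} = \sum_{j \ge 0}\binom{j+M}{M} t^{jk}$ and reading off coefficients produces the quasi-polynomial formulae; crucially, any apparent ``missing'' contributions from numerator exponents above the cutoff fall on arguments in $\{-1, \ldots, -M\}$, which are precisely the roots of $\binom{n+M}{M}$, so that the formula in fact holds on the whole ranges $d \ge -\alpha$ and $d > -\alpha$.

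For part (3), I substitute $t \to 1/t$: the denominator transforms via $(1 - t^{-k})^{M+1} = (-1)^{M+1} t^{-k(M+1)}(1 - t^k)^{M+1}$, contributing the sign $(-1)^{M+1}$ and a prefactor $t^{k(M+1)}$. The involution $p \mapsto p' := \sum_i \tilde u_i - p$ bijects $\Pi \cap (\mathbb{Z}^{N+1} - w)$ with $\Pi^\circ \cap (\mathbb{Z}^{N+1} + w)$ and transforms the exponent $k(M+1) - \ell(p+w)$ into $\ell(p' - w)$, matching precisely the numerator of $\overset{\circ}{H}_{P,-v,-\alpha}(t)$. This establishes $H_{P,v,\alpha}(1/t) = (-1)^{M+1}\overset{\circ}{H}_{P,-v,-\alpha}(t)$ in the simplicial case, and the general case follows because the triangulation inclusion-exclusion is compatible with the swap $w \leftrightarrow -w$ and with passing from closed to open strata. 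Part (4) is then formal: the identity $\sum_{d<0} f(d) t^d = -\sum_{d \ge 0} f(d) t^d$ for quasi-polynomials, combined with the reciprocity of part (3), yields $f_{P,v,\alpha}(-i) = (-1)^M \overset{\circ}{h}_{P,-v,-\alpha}(i)$ for $i > \alpha$ by coefficient comparison. The main obstacle is verifying, in the simplicial step, that the quasi-polynomial extracted from the rational generating function agrees with $h$ and $\overset{\circ}{h}$ everywhere on the claimed ranges (not merely asymptotically): this rests on a delicate bookkeeping that hinges on the exact interaction between the exponent range of the numerator and the vanishing locus of $\binom{n+M}{M}$.
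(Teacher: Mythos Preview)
Your proof is correct and follows essentially the same route as the paper: both set up the cone $K = \mathbb{R}_{\ge 0}(\{1\}\times P)$, identify $H_{P,v,\alpha}$ and $\overset{\circ}{H}_{P,v,\alpha}$ with integer-point generating functions of the translated cone $(-\alpha,v)+K$ and its relative interior, reduce (1)--(2) to the simplicial case by triangulation, and obtain (4) from (3) by the standard quasi-polynomial identity $\sum_{d<0}f(d)t^d = -\sum_{d\ge 0}f(d)t^d$. The only organizational difference is that for (3) the paper invokes Stanley reciprocity for arbitrary translated rational cones as a black box (\cite{BR}*{Exercises 4.5, 4.6}), applying it directly to $K$ without triangulating, whereas you reprove it in the simplicial case via the parallelepiped involution $p\mapsto \sum_i \tilde u_i - p$ and then extend to general $P$ by inclusion--exclusion; this is exactly how the cited reciprocity is proved anyway, so the two arguments are the same in substance.
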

\begin{proof}
For a subset $S \subset \mathbb{R}^{N+1}$, let 
\[
\sigma _S({\bf z}) = \sigma _S (z_1, \ldots, z_{N+1}) := \sum _{{\bf m} \in S \cap \mathbb{Z}^{N+1}} {\bf z}^{\bf m}
\]
denote the integer-point transform of $S$ (see \cite{BR}*{Section 3.3}). 
This is a formal sum of Laurent monomials ${\bf z}^{\bf m} = z_1^{m_1} \cdots z_{N+1}^{m_{N+1}}$ 
for ${\bf m} = (m_1, \ldots , m_{N+1}) \in S \cap \mathbb{Z}^{N+1}$. 

Let $K := \mathbb{R}_{\ge 0} \bigl( \{ 1 \} \times P \bigr) \subset \mathbb{R}^{N+1}$ be the cone over $P$. 
Then, we have 
\begin{align*}
\begin{split}
H_{P,v,\alpha} (t) &= \sigma _{(- \alpha, v) + K} (t, 1 , \ldots, 1), \\
\overset{\circ}{H}_{P, -v, - \alpha}(t) &= \sigma _{(\alpha, - v) + \operatorname{relint}(K)} (t, 1 , \ldots, 1). 
\end{split} \tag{$\spadesuit$}
\end{align*}
Furthermore, we have 
\[
\sigma _{(- \alpha, v) + K} (z_1^{-1}, \ldots, z_{N+1}^{-1}) 
= (-1)^{M+1} \sigma _{(\alpha, -v) + \operatorname{relint}(K)} (z_1, \ldots, z_{N+1})
\]
by \cite{BR}*{Exercises 4.5 and 4.6}. Therefore, we get the desired equality in (3). 

The second assertions of (1) and (2) are obvious from the definition of $h_{P, v, \alpha}$ and $\overset{\circ}{h}_{P, v, \alpha}$. 

By taking a triangulation of $P$, 
the first assertions in (1) and (2) can be reduced to the same assertions for simplices $P$. 
Assume that $P$ is a simplex. Let $v_1, \ldots, v_{M+1}$ be the vertices of $P$. 
Since $P$ is rational polytope, we may write $v_i = u_i / a_i$ for some $u_i \in \mathbb{Z}^N$ and $a_i \in \mathbb{Z}_{>0}$. 
We set $D, D^{\circ} \subset \mathbb{R}^{N+1}$ by
\begin{align*}
D &:= \left\{ \sum _{i = 1}^{M+1}  \alpha _i (a_i, u_i) \ \middle | \ \alpha_1, \ldots, \alpha_{M+1} \in [0,1) \right\}, \\ 
D^{\circ} &:= \left\{ \sum _{i = 1}^{M+1} \alpha _i (a_i, u_i) \ \middle | \ \alpha_1, \ldots, \alpha_{M+1} \in (0,1] \right\}.
\end{align*}
Then, by \cite{BR}*{Theorem 3.5}, we have 
\begin{align*}
\sigma _{(- \alpha, v) + K} (z_1, {\bf z}) 
&= \frac{\sigma _{(- \alpha, v) + D} (z_1, {\bf z})}{\prod _{i=1} ^{M+1} (1-z_1^{a_i} {\bf z}^{u_i})}, \\
\sigma _{(- \alpha, v) + \operatorname{relint}(K)} (z_1, {\bf z}) 
&= \frac{\sigma _{(- \alpha, v) + D^{\circ}} (z_1, {\bf z})}{\prod _{i=1} ^{M+1} (1-z_1^{a_i} {\bf z}^{u_i})}, 
\end{align*}
where ${\bf z} = (z_2, \ldots, z_{N+1})$. 
Therefore, we have 
\begin{align*}
H_{P,v,\alpha} (t) 
&= \sigma _{(- \alpha, v) + K} (t, 1, \ldots , 1) 
= \frac{\sigma _{(- \alpha, v) + D} (t, 1, \ldots, 1)}{\prod _{i=1} ^{M+1} (1- t^{a_i})}, \\
\overset{\circ}{H}_{P,v,\alpha} (t) 
&= \sigma _{(- \alpha, v) + \operatorname{relint}(K)} (t, 1, \ldots , 1) 
= \frac{\sigma _{(- \alpha, v) + D^{\circ}} (t, 1, \ldots, 1)}{\prod _{i=1} ^{M+1} (1- t^{a_i})}. 
\end{align*}
Here, we have 
\begin{align*}
\deg \sigma _{(- \alpha, v) + D} (t, 1, \ldots, 1) 
&< - \alpha + \sum _{i=1} ^{M+1} a_i, \\
\deg \sigma _{(- \alpha, v) + D^{\circ}} (t, 1, \ldots, 1) 
&\le - \alpha + \sum _{i=1} ^{M+1} a_i. 
\end{align*}
Therefore, we can conclude that $h_{P, v, \alpha}$ is a quasi-polynomial on $d \ge - \alpha$, and 
that $\overset{\circ}{h}_{P, v, \alpha}$ is a quasi-polynomial on $d > - \alpha$. 
We complete the proof of (1) and (2). 

Finally, we prove (4). 
Since $f_{P,v,\alpha}$ is a quasi-polynomial, we have
\[
\sum _{i \in \mathbb{Z}_{< - \alpha}} {f_{P,v,\alpha}(i)} t^{i} 
= - \sum _{i \in \mathbb{Z}_{\ge - \alpha}} {f_{P,v,\alpha}(i)} t^i
\]
as rational functions (cf.\ \cite{BR}*{Exercise 4.7}). 
Therefore, we have 
\begin{align*}
\sum _{i \in \mathbb{Z}_{> \alpha}} {f_{P,v,\alpha}(- i)} t^{- i}
&= \sum _{i \in \mathbb{Z}_{< - \alpha}} {f_{P,v,\alpha}(i)} t^{i} \\ 
&=- \sum _{i \in \mathbb{Z}_{\ge - \alpha}} {f_{P,v,\alpha}(i)} t^i \\
&=- H_{P,v,\alpha}(t) \\
&= (-1)^{M} \overset{\circ}{H}_{P, -v, - \alpha}(t^{-1}) \\
&= (-1)^{M} \sum _{i \in \mathbb{Z}} \overset{\circ}{h}_{P, -v, - \alpha}(i) t^{-i}. 
\end{align*}
Here, the third equality follows from (1), and the fourth follows from (3). 
By comparing the coefficients, we conclude that $f_{P,v,\alpha}(- i) = (-1)^M \overset{\circ}{h}_{P, -v, - \alpha}(i)$ for $i \in \mathbb{Z}_{> \alpha}$. 
\end{proof}

\begin{bibdiv}
\begin{biblist*}

\bib{ACIK}{article}{
   author={Akiyama, Shigeki},
   author={Caalim, Jonathan},
   author={Imai, Katsunobu},
   author={Kaneko, Hajime},
   title={Corona limits of tilings: periodic case},
   journal={Discrete Comput. Geom.},
   volume={61},
   date={2019},
   number={3},
   pages={626--652},
%   issn={0179-5376},
%   review={\MR{3918550}},
   doi={\doi{10.1007/s00454-018-0033-x}},
}

\bib{ABHPS}{article}{
   author={Ardila, Federico},
   author={Beck, Matthias},
   author={Ho\c{s}ten, Serkan},
   author={Pfeifle, Julian},
   author={Seashore, Kim},
   title={Root polytopes and growth series of root lattices},
   journal={SIAM J. Discrete Math.},
   volume={25},
   date={2011},
   number={1},
   pages={360--378},
   doi={\doi{10.1137/090749293}},
}

\bib{BHV99}{article}{
   author={Bacher, R.},
   author={de la Harpe, P.},
   author={Venkov, B.},
   title={S\'{e}ries de croissance et polyn\^{o}mes d'Ehrhart associ\'{e}s
   aux r\'{e}seaux de racines},
   language={French, with English and French summaries},
   note={Symposium \`a la M\'{e}moire de Fran\c{c}ois Jaeger (Grenoble,
   1998)},
   journal={Ann. Inst. Fourier (Grenoble)},
   volume={49},
   date={1999},
   number={3},
   pages={727--762},
   doi={\doi{10.5802/aif.1689}}
}

\bib{DZS}{article}{
   author={Baerlocher, Christian},
   author={McCusker, Lynne B.},
   title={Database of Zeolite Structures},
   eprint={http://www.iza-structure.org/databases/}
}

\bib{BR}{book}{
   author={Beck, Matthias},
   author={Robins, Sinai},
   title={Computing the continuous discretely},
   series={Undergraduate Texts in Mathematics},
   edition={2},
   note={Integer-point enumeration in polyhedra;
   With illustrations by David Austin},
   publisher={Springer, New York},
   date={2015},
%   pages={xx+285},
%   isbn={978-1-4939-2968-9},
%   isbn={978-1-4939-2969-6},
%   review={\MR{3410115}},
   doi={\doi{10.1007/978-1-4939-2969-6}},
}

\bib{BG2009}{book}{
   author={Bruns, Winfried},
   author={Gubeladze, Joseph},
   title={Polytopes, rings, and $K$-theory},
   series={Springer Monographs in Mathematics},
   publisher={Springer, Dordrecht},
   date={2009},
   doi={\doi{10.1007/b105283}},
}

\bib{CS97}{article}{
   author={Conway, J. H.},
   author={Sloane, N. J. A.},
   title={Low-dimensional lattices. VII. Coordination sequences},
   journal={Proc. Roy. Soc. London Ser. A},
   volume={453},
   date={1997},
   number={1966},
   pages={2369--2389},
   doi={\doi{10.1098/rspa.1997.0126}},
}

\bib{CLS}{book}{
   author={Cox, David A.},
   author={Little, John B.},
   author={Schenck, Henry K.},
   title={Toric varieties},
   series={Graduate Studies in Mathematics},
   volume={124},
   publisher={American Mathematical Society, Providence, RI},
   date={2011},
   doi={\doi{10.1090/gsm/124}},
}

\bib{dVY21}{article}{
   author={de Vries, Christopher},
   author={Yoshinaga, Masahiko},
   title={Ehrhart quasi-polynomials of almost integral polytopes},
   eprint={arXiv:2108.11132v2}
}

\bib{Eon04}{article}{
   author={Eon, Jean-Guillaume},
   title={Topological density of nets: a direct calculation},
   journal={Acta Crystallogr. Sect. A},
   volume={60},
   date={2004},
   number={1},
   pages={7--18},
   doi={\doi{10.1107/s0108767303022037}},
}

\bib{Fri13}{article}{
   author={Fritz, Tobias},
   title={Velocity polytopes of periodic graphs and a no-go theorem for
   digital physics},
   journal={Discrete Math.},
   volume={313},
   date={2013},
   number={12},
   pages={1289--1301},
   doi={\doi{10.1016/j.disc.2013.02.010}},
}

\bib{GS19}{article}{
   author={Goodman-Strauss, C.},
   author={Sloane, N. J. A.},
   title={A coloring-book approach to finding coordination sequences},
   journal={Acta Crystallogr. Sect. A},
   volume={75},
   date={2019},
   number={1},
   pages={121--134},
   doi={\doi{10.1107/S2053273318014481}},
}

\bib{GKBS96}{article}{
  title={Algebraic description of coordination sequences and exact topological densities for zeolites},
  author={Grosse-Kunstleve, R. W},
  author={Brunner, G. O},
  author={Sloane, N. J. A},
  journal={Acta Crystallogr. Sect. A},
  volume={52},
  number={6},
  pages={879--889},
  year={1996},
  doi={\doi{10.1107/S0108767396007519}},
}

\bib{SACADA}{article}{
   title={Homo Citans and Carbon Allotropes: For an Ethics of Citation},
   author={Hoffmann, Roald},
   author={Kabanov, Artyom A.},
   author={Golov, Andrey A.},
   author={Proserpio, Davide M.},
   journal={Angewandte Chemie International Edition},
   volume={55},
   number={37},
   pages={10962--10976},
   year={2016},
   doi={\doi{10.1002/anie.201600655}},
}

\bib{KS02}{article}{
   author={Kotani, Motoko},
   author={Sunada, Toshikazu},
   title={Geometric aspects of large deviations for random walks on a
   crystal lattice},
   conference={
      title={Microlocal analysis and complex Fourier analysis},
   },
   book={
      publisher={World Sci. Publ., River Edge, NJ},
   },
   date={2002},
   pages={215--223},
   doi={\doi{10.1142/9789812776594_0014}},
}

\bib{KS06}{article}{
   author={Kotani, Motoko},
   author={Sunada, Toshikazu},
   title={Large deviation and the tangent cone at infinity of a crystal
   lattice},
   journal={Math. Z.},
   volume={254},
   date={2006},
   number={4},
   pages={837--870},
   doi={\doi{10.1007/s00209-006-0951-9}},
}

\bib{MS11}{article}{
   author={Maleev, Andrey},
   author={Shutov, Anton}, 
   title={Layer-by-Layer Growth Model for Partitions, Packings, and Graphs},
   journal={Tranzit-X, Vladimir},
%   volume={74},
   date={2011},
%   number={2},
   pages={107},
}

\bib{McM78}{article}{
   author={McMullen, P.},
   title={Lattice invariant valuations on rational polytopes},
   journal={Arch. Math. (Basel)},
   volume={31},
   date={1978/79},
   number={5},
   pages={509--516},
   issn={0003-889X},
   doi={10.1007/BF01226481},
}

\bib{MI11}{article}{
  author={Momma, Koichi},
  author={Izumi, Fujio},
  title={{\it VESTA3} for three-dimensional visualization of crystal, volumetric and morphology data},
  journal={Journal of Applied Crystallography},
  volume={44},
  number={6},
  pages={1272--1276},
  year={2011},
  doi = {\doi{10.1107/S0021889811038970}},
}

\bib{NSMN21}{article}{
  author={Nakamura, Yusuke},
  author={Sakamoto, Ryotaro},
  author={Mase, Takafumi},
  author={Nakagawa, Junichi},
  title={Coordination sequences of crystals are of quasi-polynomial type},
  journal={Acta Crystallogr. Sect. A},
  volume={77},
  number={2},
  pages={138--148},
  year={2021},
  doi={\doi{10.1107/S2053273320016769}},
}

\bib{SM19}{article}{
   author={Shutov, Anton},
   author={Maleev, Andrey},
   title={Coordination sequences and layer-by-layer growth of periodic structures},
   journal={Z. Kristallogr.},
   volume={234},
   date={2019},
   number={5},
   pages={291--299},
   doi={\doi{10.1515/zkri-2018-2144}},
}

\bib{SM20a}{article}{
   author={Shutov, Anton},
   author={Maleev, Andrey},
   title={Coordination sequences of 2-uniform graphs},
%   journal={Zeitschrift f\"{u}r Kristallographie},
   journal={Z. Kristallogr.},
   volume={235},
   date={2020},
   number={4-5},
   pages={157--166},
   doi={\doi{10.1515/zkri-2020-0002}},
}

\bib{SM20b}{article}{
   author={Shutov, Anton},
   author={Maleev, Andrey},
   title={Topological densities of periodic graphs},
%   journal={Zeitschrift f\"{u}r Kristallographie},
   journal={Z. Kristallogr.},
   volume={235},
   date={2020},
   number={12},
   pages={609--617},
   doi={\doi{10.1515/zkri-2020-0065}},
}

\bib{Sta96}{book}{
   author={Stanley, Richard P.},
   title={Combinatorics and commutative algebra},
   series={Progress in Mathematics},
   volume={41},
   edition={2},
   publisher={Birkh\"{a}user Boston, Inc., Boston, MA},
   date={1996},
%   pages={x+164},
%   isbn={0-8176-3836-9},
%   review={\MR{1453579}},
}

\bib{Sunada}{book}{
   author={Sunada, Toshikazu},
   title={Topological crystallography},
   series={Surveys and Tutorials in the Applied Mathematical Sciences},
   volume={6},
   note={With a view towards discrete geometric analysis},
   publisher={Springer, Tokyo},
   date={2013},
%   pages={xii+229},
%   isbn={978-4-431-54176-9},
%   isbn={978-4-431-54177-6},
%   review={\MR{3014418}},
   doi={\doi{10.1007/978-4-431-54177-6}},
}

\bib{Wakatsuki}{article}{
   author={Wakatsuki, Shun},
   title={On the growth of crystal structure (Japanese)},
   journal={Suurikagaku Jissenkenkyu Letter},
%   volume={LMSR 2018-21},
%   number={21},
   year={2018},
   pages={LMSR 2018-21},
}

\bib{Zhu02}{article}{
   author={Zhuravlev, Shun},
   title={Self-similar growth of periodic partitions and graphs (Russian)},
   journal={St. Petersburg Math. J.},
   volume={13},
   number={2},
   year={2002},
   pages={201--220},
}

\end{biblist*}
\end{bibdiv}
\end{document}